\newcommand{\lb}{\left(}
\newcommand{\rb}{\right)}
\newcommand{\eps}{\varepsilon}
\newcommand{\Q}{\mathbb{Q}}
\newcommand{\R}{\mathbb{R}}
\newcommand{\C}{\mathbb{C}}
\newcommand{\Z}{\mathbb{Z}}
\newcommand{\K}{\mathbb{K}}
\renewcommand{\P}{\mathbb{P}}
\newcommand{\pd}{\partial}
\renewcommand{\phi}{\varphi}
\newcommand{\ip}[2]{\langle #1, #2 \rangle}
\let\oldref\ref
\renewcommand{\eqref}[1]{\textup{(\oldref{#1})}}
\newcommand{\SU}{\mathrm{SU}}
\newcommand{\U}{\mathrm{U}}
\newcommand{\PSU}{\mathrm{PSU}}
\newcommand{\CO}{\operatorname{\mathcal{CO}^0}}
\newcommand{\Sym}{\operatorname{Sym}}
\newcommand{\qcup}{\mathbin{\Asterisk}}
\newcommand{\Char}{\operatorname{char}}
\newcommand{\begen}{\begin{enumerate}[label=(\roman*), align=right]}
\newcommand{\LCEL}{L_{\text{AOC}}} 
\newcommand{\TCh}{T_\mathrm{Ch}}
\newcommand{\Gr}{\mathrm{Gr}}
\renewcommand{\sslash}{/\mkern-6mu/}
\DeclareMathOperator*{\bigqcup}{\scalerel*{\qcup}{\sum}}
\newcommand{\pr}{\operatorname{pr}}
\newcommand{\rank}{\operatorname{rank}}
\newcommand{\diff}{\mathrm{d}}
\newcommand{\Hom}{\operatorname{Hom}}
\newcommand{\Tr}{\operatorname{Tr}}
\newcommand{\lspan}[1]{\left\langle #1 \right\rangle}
\newcommand{\id}{\operatorname{id}}
\let\originalleft\left
\let\originalright\right
\renewcommand{\left}{\mathopen{}\mathclose\bgroup\originalleft}
\renewcommand{\right}{\aftergroup\egroup\originalright}
\theoremstyle{plain}
\newtheorem{thm}{Theorem}[section]
\newtheorem{lem}[thm]{Lemma}
\newtheorem{prop}[thm]{Proposition}
\newtheorem{mthm}{Theorem}
\theoremstyle{remark}
\newenvironment{rmk}
  {\pushQED{\qed}\rmkx}
  {\popQED\endrmkx}
\theoremstyle{plain}
\theoremstyle{definition}
\newenvironment{defn}
  {\pushQED{\qed}\defnx}
  {\popQED\enddefnx}
\title{A monotone Lagrangian casebook}
\author{Jack Smith}
\address{Department of Mathematics\\ University College London\\ Gower Street\\ London\\ WC1E  6BT}
\email{jack.smith@ucl.ac.uk}
\begin{document}

\begin{abstract}
We present an array of new calculations in Lagrangian Floer theory which demonstrate observations relating to symplectic reduction, grading periodicity, and the closed--open map.  We also illustrate Perutz's symplectic Gysin sequence and the quilt theory of Wehrheim and Woodward.
\end{abstract}

\maketitle


\section{Introduction}

Given a monotone Lagrangian submanifold $L$ of a symplectic manifold $X$, a fundamental invariant is its Floer cohomology.  This describes the endomorphisms of $L$ in the montone Fukaya category of $X$, and its non-vanishing indicates that $L$ cannot be displaced from itself by a Hamiltonian flow, but in general it is difficult to calculate.  In this note we make some simple observations about its properties and exploit them to study specific examples.  In addition, we give some explicit computations in $(\C\P^1)^3$ and $\C\P^2\times \C\P^1$ which demonstrate Perutz's symplectic Gysin sequence \cite{PerGysin} and Wehrheim--Woodward's quilt theory \cite{QuiltedHF}, for which there are few concrete calculations in the literature.  Throughout we pay particular attention to relative spin structures, and chase through the signs on which the answers are subtly dependent.


\subsection{Setup}
\label{sscSetup}

We assume that $X$ is compact or tame (convex or geometrically bounded) at infinity, and that $L$ is closed, connected and monotone, meaning that the Maslov index and area morphisms
\[
\mu : \pi_2(X, L) \rightarrow \Z \quad \text{and} \quad \omega : \pi_2(X, L) \rightarrow \R
\]
are positively proportional.  We also require that the minimal Maslov number $N_L \in \Z_{>0} \cup \{\infty\}$, meaning the positive generator of $\mu(\pi_2(X, L))$, is at least $2$.  If the characteristic of the coefficient field $\K$ is not $2$ then we assume that $L$ is orientable (which automatically makes $N_L$ even and hence at least $2$) and relatively spin, and endowed with a choice of relative spin structure $s$ as in \cref{sscRelSpin}.  Associated to $s$ is a background class $b$ in $H^2(X; \Z/2)$.  The choice of $s$ orients the moduli spaces of pseudoholomorphic discs with boundary on $L$, whose counts are the key ingredients in Floer theory.  $L$ may also be equipped with a flat line bundle $\mathcal{L}$ over $\K$.

We denote the triple $(L, s, \mathcal{L})$ by $L^\flat$ and call it a \emph{monotone Lagrangian brane}.  It has a Floer cohomology algebra $HF^*(L^\flat, L^\flat; \Lambda)$ over the Laurent polynomial ring $\Lambda = \K[T^{\pm 1}]$, in which the `Novikov variable' $T$ has degree $N_L$.  The construction and basic properties of this algebra are described in \cite{BCQS}, where it is called the Lagrangian quantum homology of $L$ (in contrast to \cite{BCQS} we use cohomological grading).  This is our main object of study.

We summarise some of its properties in \cref{secPrerequisites}, but for now it suffices to point out the following.  There is a unital $\Lambda$-algebra homomorphism, the \emph{(length-zero) closed--open string map}
\[
\CO :QH^*(X, b; \Lambda) \rightarrow HF^*(L^\flat, L^\flat; \Lambda),
\]
from the quantum cohomology of $X$ (with background class $b$), and this induces the `quantum module action' of $QH^*$ on $HF^*$.  There is also a multiplicative spectral sequence
\[
E_1 = H^*(L; \K) \otimes_\K \Lambda \implies HF^*(L^\flat, L^\flat; \Lambda),
\]
originally due to Oh \cite{OhSS}.  We say $L^\flat$ is \emph{wide} if we have $HF^*(L^\flat, L^\flat; \Lambda) \cong H^*(L; \K) \otimes_\K \Lambda$ as graded $\Lambda$-modules, and \emph{narrow} if $HF^*(L^\flat, L^\flat; \Lambda)=0$.


\subsection{Main results}

The bulk of the paper comprises worked examples, in which we compute $HF^*(L^\flat, L^\flat; \Lambda)$ as a $\Lambda$-algebra or -module for various Lagrangians $L$.  After the brief Floer theory review in \cref{secPrerequisites}, each section focuses on a different technique, and can be read independently of the others.  The only exception is that the examples studied in \cref{secQuilts} are defined in \cref{secGysin}.

In \cref{secReduction}, for each sequence $k_1, \dots, k_r$ of positive integers we construct a Lagrangian embedding of the flag variety $F(d_1, \dots, d_r)$ in the monotone product of Grassmannians $\Gr(k_1, n) \times \dots \times \Gr(k_r, n)$, where $d_j \coloneqq k_1+\dots+k_j$ and $n \coloneqq d_r$.  We equip it with a specific choice of relative spin structure and the trivial flat line bundle, and combining $\CO$ with knowledge of the quantum cohomology of Grassmannians we compute

\begin{mthm}[\cref{HFFlagVariety}]
\label{mHFFlagVariety}
The Floer cohomology algebra of this Lagrangian is
\[
\Lambda[c_{1,1}, \dots, c_{1,k_1}, c_{2,1}, \dots, c_{r, k_r}]\bigg/\bigqcup_{j=1}^r (1+c_{j,1}+\dots+c_{j,k_j}) = 1+T,
\]
where $\qcup$ denotes the Floer product, $c_{j,k}$ has degree $2k$, and the Novikov variable $T$ has degree $2n$.
\end{mthm}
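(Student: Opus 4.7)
The approach is to leverage the closed--open map $\CO\colon QH^*(X, b; \Lambda) \to HF^*(L^\flat, L^\flat; \Lambda)$ as a unital $\Lambda$-algebra homomorphism, together with the Siebert--Tian--Witten presentation of the quantum cohomology of each Grassmannian factor.  First I would define the named generators as $\CO$-images: with $S_j$ the tautological rank-$k_j$ subbundle on $\Gr(k_j, n)$ (pulled back to $X$), set $c_{j,i} \coloneqq \CO(c_i(S_j)) \in HF^{2i}(L^\flat, L^\flat; \Lambda)$.  The basic classical input is that $S_j|_L$ is the $j$-th graded piece of the tautological flag on $L$, so $\bigoplus_j S_j|_L$ is the trivial rank-$n$ bundle and consequently $\prod_j c(S_j|_L) = 1$ in $H^*(L; \K)$.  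In particular the target relation holds modulo $T$.

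To pin down the quantum correction, note that $\bigqcup_j C_j$ (with $C_j \coloneqq 1 + c_{j,1} + \dots + c_{j,k_j}$) has total degree at most $2n$, and since $T$ itself has degree $2n$ any quantum correction must sit in top degree and be a scalar multiple of $T$.  Thus the theorem reduces to the single top-degree identity $c_{1,k_1} \qcup \dots \qcup c_{r,k_r} = T$ in $HF^{2n}(L^\flat, L^\flat; \Lambda)$.  I would derive this by pushing the quantum Whitney relation $c(S_j) \cdot c(Q_j) = 1 + T$ in each factor of $QH^*(X, b)$ across $\CO$.  Classically $Q_j|_L \cong \bigoplus_{i \neq j} S_i|_L$, so $c(Q_j)|_L = \prod_{i \neq j} c(S_i|_L)$ in $H^*(L)$; and because each component $c_i(Q_j)$ has degree $2i \leq 2(n-k_j) < 2n$, there is no room in $HF^*$ for a quantum correction to this restriction (a hypothetical $T^m$-term with $m \geq 1$ would sit in $H^{2i - 2nm}(L)$, which vanishes for degree reasons).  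Hence $\CO(c(Q_j)) = \bigqcup_{i \neq j} C_i$ exactly, and multiplying by $C_j$ yields $\bigqcup_j C_j = 1 + T$, with the same answer obtained from each factor (a consistency check on the signs).

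Finally, to upgrade this identity to the full presentation I would compare ranks.  Let
\[
A \coloneqq \Lambda[c_{j,i}] \bigg/ \biggl( \bigqcup_j (1 + c_{j,1} + \dots + c_{j,k_j}) - (1 + T) \biggr).
\]
Reducing modulo $T$ gives the Borel presentation of $H^*(F(d_1, \dots, d_r); \K)$, so $A$ is $\Lambda$-free of rank $\dim_\K H^*(L)$.  The identity established above makes the natural map $A \to HF^*(L^\flat, L^\flat; \Lambda)$ well defined; surjectivity follows because the classical $c_{j,i}$ generate $H^*(L; \K)$ and hence, via Oh's spectral sequence, generate $HF^*$ as a $\Lambda$-algebra.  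Oh's spectral sequence also bounds $\rank_\Lambda HF^*(L^\flat, L^\flat; \Lambda) \leq \dim_\K H^*(L)$, forcing the map to be an isomorphism (and $L^\flat$ to be wide as a side effect).

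The hard part will be verifying the sign: specifically, confirming that the quantum Whitney relation in each Grassmannian factor reads $c(S_j) \cdot c(Q_j) = 1 + T$ rather than $1 + \epsilon_j T$ for some more complicated sign, once one uses the relative spin structure built in \cref{secReduction}.  This requires careful bookkeeping of the choice of relative spin structure and the associated background class on each factor, as set up in \cref{sscRelSpin}, and is the subtle point on which the clean final form of the relation ultimately hinges.
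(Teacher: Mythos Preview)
Your proposal is correct and follows essentially the same approach as the paper: define the generators via $\CO$ (equivalently PSS, since all relevant degrees are $<N_L=2n$), push the quantum Whitney relation $c(S_j)\qcup c(Q_j)=1+T$ through $\CO$ using that $Q_j|_L\cong\bigoplus_{i\neq j}S_i|_L$ and that no quantum corrections can appear below degree $2n$, and then conclude by a dimension comparison.  The only cosmetic difference is that the paper invokes Biran--Cornea's wide/narrow dichotomy (\cref{BCWideNarrow}) directly to get wideness, whereas you phrase it as a surjection from the model ring $A$ combined with the rank bound from Oh's spectral sequence; these are equivalent.  You are also right that the delicate point is the sign in the quantum Whitney relation, which the paper handles in a separate lemma by tracking how the chosen background class $b=\sum_j k_jc_1(E_j)$ twists Witten's $(-1)^{k_j}T$ to $+T$.
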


The importance of the relatively spin condition becomes manifest in this calculation: if one tries to use a background class $b$ which does not admit a relative spin structure then one obtains relations which are only consistent in characteristic $2$.

This Lagrangian is constructed by symplectic reduction using a Hamiltonian action of $\U(k_1)\times\dots\times\U(k_r)$ on $\C^{n^2}$, and along the way we prove the following monotonicity transfer property for general symplectic reductions.

\begin{mthm}[\cref{lemRedMonotone}]
\label{mlemRedMonotone}
Suppose $X$ is a symplectic manifold carrying a Hamiltonian action of a compact connected Lie group $K$, such that $K$ acts freely on the zero set $\mu^{-1}(0)$ of the moment map.  If $L \subset X$ is a $K$-invariant monotone Lagrangian submanifold contained in $\mu^{-1}(0)$ then $L/K$ is monotone in $X \sslash K$.  In particular, $X\sslash K$ is (spherically) monotone.
\end{mthm}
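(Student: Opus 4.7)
My plan is to lift discs in $(X\sslash K, \bar L)$, where $\bar L := L/K$, to discs in $(X, L)$ and check that area and Maslov index are preserved under the lift. Let $\pi\colon \mu^{-1}(0)\to X\sslash K$ denote the quotient, which is a principal $K$-bundle by the freeness hypothesis, and $\iota\colon \mu^{-1}(0)\hookrightarrow X$ the inclusion; the reduced form $\omega_\mathrm{red}$ on $X\sslash K$ satisfies $\pi^*\omega_\mathrm{red} = \iota^*\omega$. Before anything else I would check that $\bar L$ is a Lagrangian submanifold, which follows from freeness of the $K$-action on $L$, the dimension count $\dim \bar L = \dim L - \dim K = \tfrac{1}{2}\dim(X\sslash K)$, and $\omega|_L = 0$ combined with $\pi|_L$ being a surjective submersion.

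Given smooth $u\colon (D^2, \partial D^2) \to (X\sslash K, \bar L)$, I would use simple connectedness of $D^2$ to lift $u$ along $\pi$ to $\tilde u\colon D^2\to \mu^{-1}(0)$. Since $L$ is $K$-invariant we have $\pi^{-1}(\bar L) = L$, so $\tilde u$ has boundary on $L$ and produces a disc in $(X, L)$ via $\iota$. Its symplectic area agrees with that of $u$ because
\[
\int_{D^2} \tilde u^*\iota^*\omega = \int_{D^2} \tilde u^*\pi^*\omega_\mathrm{red} = \omega(u).
\]

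For the Maslov index I would fix a compatible almost complex structure $J$ on $X$ and decompose $TX$ along $\mu^{-1}(0)$. Since $T\mu^{-1}(0) = (\mathfrak{k}\cdot p)^\omega$, the orbit directions form an isotropic subbundle, so $V := \mathfrak{k}\cdot p \oplus J(\mathfrak{k}\cdot p)$ is a symplectic subbundle of complex rank $\dim K$ containing $\mathfrak{k}\cdot p$ as a real Lagrangian. Its symplectic complement $H$ maps isomorphically onto $T(X\sslash K)$ under $d\pi$, giving a symplectic splitting $TX|_{\mu^{-1}(0)} = H \oplus V \cong \pi^*T(X\sslash K) \oplus V$. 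A basis of $\mathfrak{k}$ produces, via fundamental vector fields, a complex trivialisation of $V$ in which $\mathfrak{k}\cdot p$ is a constant real Lagrangian. Since $L$ is $K$-invariant the orbit directions sit inside $TL$, so on $\partial D^2$ the pullback $\tilde u^*TL$ splits as $\tilde u^*(\mathfrak{k}\cdot p) \oplus u^*T\bar L$; the first summand is constant in the chosen trivialisation and contributes zero to the Maslov index, giving $\mu(\tilde u) = \mu(u)$.

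Combining the area and Maslov computations, the positive proportionality on $\pi_2(X, L)$ transfers to the same relation on $\pi_2(X\sslash K, \bar L)$ with the same constant. For the spherical statement, given $v\colon S^2 \to X\sslash K$ I would pick any $p \in \bar L$ and use path-connectedness to represent $v$ by a based map $(S^2, *) \to (X\sslash K, p)$, equivalently a disc with constant boundary at $p$; that disc has area $\omega(v)$ and Maslov index $2c_1(v)$, so the monotonicity of $\bar L$ just proved yields positive proportionality of $c_1$ and $\omega$ on $\pi_2(X\sslash K)$. The main technical point will be the Maslov splitting, specifically verifying that the orbit Lagrangian really defines a null-homotopic loop in the Lagrangian Grassmannian of $V$; once that is settled the rest is bookkeeping.
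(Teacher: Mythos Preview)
Your proof is correct and follows the same overall strategy as the paper: lift discs from $(X\sslash K,\bar L)$ to $(X,L)$, then check that area and Maslov index are preserved. The Maslov computation is essentially identical to the paper's---split $TX|_{\mu^{-1}(0)}$ using a compatible $J$ into the complexified orbit piece (trivialised by fundamental vector fields, hence Maslov-null) and a horizontal complement identified with the pullback of $T(X\sslash K)$. The paper states this as a short exact sequence of bundle pairs rather than a direct-sum splitting of $TL$, but over $S^1$ these are equivalent and the Maslov index is additive either way.

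Where you are more economical is the lifting step. You simply note that $\pi$ is a principal $K$-bundle and $D^2$ is contractible, so every disc downstairs lifts; this is all that monotonicity requires. The paper instead establishes that $p_*\colon \pi_2(\mu^{-1}(0),L)\to\pi_2(X\sslash K,\bar L)$ is an \emph{isomorphism}, via two rounds of the five lemma applied to long exact sequences of pairs, triples, and fibrations. That yields strictly more (bijectivity on relative $\pi_2$), but for the stated conclusion your direct lift is sufficient and avoids the diagram-chasing.
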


This allows one to deduce monotonicity of the reduced manifold $X\sslash K$ or Lagrangian $L/K$ from monotonicity of $L$, which may be much simpler topologically.

In \cref{secFloerPoincare} we study periodicity in the grading of $HF^*$ using the discrete Fourier transform.  This gives constraints on the minimal Maslov number $N_L$ for certain Lagrangians:

\begin{mthm}[\cref{thmExteriorAlgebraNL}]
If $H^*(L; \K)$ is an exterior algebra on generators of degree $2k_1-1 \leq \dots \leq 2k_r-r$, and the quantum cohomology of $X$ contains an invertible element of even degree $q\leq 2k_r$ then $N_L$ is at most $2k_r$.  In particular, if $L$ is a monotone torus and $QH^*(X)$ contains an invertible element of degree $2$ then $N_L=2$.
\end{mthm}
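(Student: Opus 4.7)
My plan is to use the closed--open homomorphism $\CO$ to transport the invertible element to $HF^*$, and then combine the resulting $\Z/N_L$-periodicity of the graded dimensions with the Oh spectral sequence via a discrete Fourier transform argument to bound $N_L$.

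First, since $\CO$ is a unital $\Lambda$-algebra homomorphism, $\beta := \CO(\alpha) \in HF^q(L^\flat, L^\flat; \Lambda)$ is invertible. Specializing $T \mapsto 1$, I view $HF^*$ as a $\Z/N_L$-graded $\K$-algebra $A^*$; multiplication by the invertible element $\beta \in A^q$ gives isomorphisms $A^i \xrightarrow{\sim} A^{i+q}$ for every $i \in \Z/N_L$, so the graded dimensions $a_i := \dim_\K A^i$ are periodic with period $d := \gcd(q, N_L)$. Equivalently, the Poincar\'e polynomial $P_A(t) := \sum_i a_i t^i \in \Z_{\geq 0}[t]/(t^{N_L} - 1)$ is annihilated by $t^q - 1$, so its discrete Fourier transform $\hat P_A(\zeta)$ vanishes at every $N_L$-th root of unity $\zeta$ with $\zeta^q \neq 1$.

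Second, the Oh spectral sequence $E_1 = H^*(L; \K) \otimes_\K \Lambda \Rightarrow HF^*$ gives the coefficient-wise bound $P_A(t) \leq P_L(t) \pmod{t^{N_L} - 1}$, where $P_L(t) = \prod_{j=1}^r (1 + t^{2k_j - 1})$. Since each differential raises total cohomological degree by one, one may write $P_L(t) - P_A(t) = (1+t)R(t)$ with $R \in \Z_{\geq 0}[t]/(t^{N_L} - 1)$. More refinedly, the page-$r$ differential $d_r\colon H^j(L) \otimes T^k \to H^{j+1-rN_L}(L) \otimes T^{k+r}$ can only be nontrivial when both source and target degrees lie in $[0, \dim L]$, so $R$ is constrained to be supported in the specific degrees realized as $j \bmod N_L$ for such $j$.

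Third, I assume $N_L > 2k_r$ for contradiction. For each $N_L$-th root of unity $\zeta$ with $\zeta^q \neq 1$, the equation $P_L - P_A = (1+t)R$ together with $\hat P_A(\zeta) = 0$ forces $\hat R(\zeta) = \hat P_L(\zeta)/(1+\zeta)$, an explicit complex number determined by $P_L$. The main obstacle will be to show that these prescribed DFT values of $R$ cannot be simultaneously realized by a polynomial $R \geq 0$ whose support satisfies the Oh spectral sequence constraints; the hypothesis $2k_j - 1 < N_L$ makes the factors $(1 + \zeta^{2k_j-1})$ of $\hat P_L(\zeta)$ tractable, and comparing magnitudes against $|\hat R(\zeta)| \leq R(1) = (P_L(1) - P_A(1))/2$ together with the narrow allowed support of $R$ forces the contradiction. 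The torus corollary is then immediate: all generators have $k_j = 1$, so $k_r = 1$ gives $N_L \leq 2$, while orientability gives $N_L \geq 2$, hence $N_L = 2$.
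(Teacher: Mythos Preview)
Your overall strategy (periodicity of $HF^*$ via $\CO$ of the invertible element, discrete Fourier transform of the Floer--Poincar\'e polynomial, contradiction when $N_L>2k_r$) matches the paper's, but your third paragraph is where the argument is supposed to happen and it is not actually carried out.  The phrase ``the main obstacle will be to show\ldots'' followed by a vague appeal to magnitude bounds on $\hat R$ and ``narrow allowed support'' is not a proof, and it is not clear how to make it one along the lines you sketch.

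The missing idea is that under the contradiction hypothesis $N_L>2k_r$ your remainder $R$ is forced to be \emph{zero}.  Indeed, the exterior generators lie in degrees $2k_j-1\leq 2k_r-1<N_L-1$, so $H^*(L;\K)$ is generated in degrees strictly below $N_L-1$, and Biran--Cornea's criterion (the paper's \cref{BCWideNarrow}) gives wideness outright.  Hence $P_A\equiv P_L\pmod{t^{N_L}-1}$ exactly, and your DFT condition becomes simply $\prod_j(1+\zeta^{2k_j-1})=0$ at the primitive $N_L$th root $\zeta$.  Even then you are not done: this only gives $N_L=2(2k_j-1)$ for some $j$, which can exceed $2k_r$.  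The paper finishes with two further ingredients you omit: the divisibility $N_L/q'\mid P_L(1)=2^r$ (the ``total dimension'' consequence of periodicity), and the evenness of $q$, which together force $q'=\gcd(q,N_L)=N_L$, contradicting $q'\leq q\leq 2k_r<N_L$.  Without these your argument does not close even in the wide case.
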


By a more careful analysis we prove the following dichotomy for Lagrangian embeddings of $\PSU(n)$ of high minimal Maslov index.

\begin{mthm}[\cref{corLagPSU}]  Suppose $L$ is a Lagrangian embedding of $\PSU(n)$ in a closed monotone symplectic manifold $X$, whose quantum cohomology contains an invertible element of degree $2$.  Assume $n \geq 2$ and $N_L \geq 2n$.  Then we have $N_L=2n$ and either:
\begen
\item $\K$ has prime characteristic $p$ and $n$ is a power of $p$, in which case $L^\flat$ is wide for all line bundles $\mathcal{L}$ and all relative spin structures.
\item Otherwise $L^\flat$ is narrow over $\K$ for all such $\mathcal{L}$ and $s$.
\end{enumerate}
\end{mthm}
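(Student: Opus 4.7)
The equality $N_L = 2n$ follows from \cref{thmExteriorAlgebraNL} applied over $\Q$, where $H^*(\PSU(n); \Q) \cong \Lambda_\Q(x_3, x_5, \ldots, x_{2n-1})$ is an exterior algebra on generators of degrees $2k - 1$ for $k = 2, \ldots, n$. This gives $N_L \leq 2n$, and combined with the hypothesis yields equality. Since $N_L$ is topological, the same value holds regardless of $\K$, $\mathcal{L}$, and $s$.

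Now suppose $HF^* \coloneqq HF^*(L^\flat, L^\flat; \Lambda) \neq 0$. Unitality of $\CO$ makes $\CO(u) \in HF^2$ a unit, where $u \in QH^2(X; \Lambda)$ is the given degree-$2$ unit; multiplication by $\CO(u)$ is then a degree-$2$ automorphism of $HF^*$. Writing $a \coloneqq \dim_\K HF^0$ and $b \coloneqq \dim_\K HF^1$, this 2-periodicity forces $\dim_\K HF^j = a$ for even $j$ and $\dim_\K HF^j = b$ for odd $j$, so $\rank_\Lambda HF^* = n(a + b)$; wideness is $n(a + b) = \dim_\K H^*(\PSU(n); \K)$, while narrowness is $a = b = 0$.

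To establish the dichotomy I would combine the Oh spectral sequence with the discrete Fourier techniques of \cref{secFloerPoincare}, encoding the abutment in the Poincar\'e polynomial identity
\[
P_L(q) \equiv (a + bq)(1 + q^2 + \cdots + q^{2n-2}) + (1 + q)R(q) \pmod{q^{2n} - 1}
\]
with $R \in \Z_{\geq 0}[q]$, and evaluating at appropriate roots of unity. A crucial additional input is the $\Lambda$-algebra relation $\CO(u)^n = \lambda T$ for some $\lambda \in \K^\times$, which couples the symbols of $\CO(u)^k$ across the filtration levels of the Oh SS. The shape of $P_L(q)$ (the product $\prod_{j=1}^{n-1}(1 + q^{2j+1})$ when $\Char \K \nmid n$, with Baum--Browder-type corrections when $\Char \K \mid n$) then restricts the arithmetic solutions $(a, b)$, and case analysis on the $p$-adic valuation of $n$ shows that consistency with both the Fourier identity and the multiplicative constraint $\CO(u)^n = \lambda T$ holds exactly when $n$ is a power of $p = \Char \K$, in which case the SS collapses at $E_1$ delivering wideness; in all other characteristics only $a = b = 0$ survives.

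The main obstacle is the subcases (such as $n = 5$ in characteristic zero) where the additive 2-periodic bounds $a, b \leq 1$ do not already force vanishing. There the multiplicative relation $\CO(u)^n = \lambda T$, combined with the fact that any symbol of $\CO(u)$ representing a class of positive topological degree $q$ in the exterior algebra $H^*(\PSU(n); \K)$ satisfies $q^n > \dim_\R \PSU(n) = n^2 - 1$, must be leveraged to obstruct the existence of a degree-$2n$ symbol of $\CO(u)^n$ agreeing with that of $\lambda T$. Independence of the outcome from $\mathcal{L}$ and $s$ is then automatic, as these data enter only through signs compatible with the rigid 2-periodic dimension count.
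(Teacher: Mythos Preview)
Your opening paragraph, deducing $N_L=2n$ from \cref{thmExteriorAlgebraNL} over $\Q$, matches the paper exactly. After that, however, you are missing the key simplifying step, and this gap forces you into claims that do not hold.

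The paper's approach is to first invoke \cref{BCWideNarrow}. When $n=p^r$, \cref{HPSU} shows $H^*(\PSU(n);\K)$ is generated in degree $\leq 2n-3 < N_L-1$, so \cref{BCWideNarrow} gives wideness immediately (you never actually argue this case; ``the SS collapses at $E_1$'' is an assertion, not a proof). When $n$ is not a power of $p$, $H^*(\PSU(n);\K)$ is generated in degree $\leq 2n-1 = N_L-1$, so \cref{BCWideNarrow} still applies and says $L^\flat$ is wide or narrow. Thus one only has to rule out wideness, and in that case the Floer--Poincar\'e polynomial equals the \emph{known} classical Poincar\'e polynomial modulo $S^{2n}-1$. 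Evaluating the explicit product from \cref{HPSU} at a primitive $2n$th root of unity then gives a clean contradiction: some factor $1+\zeta^{2j-1}$ must vanish, forcing $n=2j-1$ odd, whence the divisibility $n \mid p^r \cdot 2^{n-1}$ from \cref{lemPeriodic} forces $n \mid p^r$, which is impossible.

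By contrast, you try to constrain the unknown dimensions $a,b$ directly. This leads you to introduce the relation $\CO(u)^n = \lambda T$ in $HF^*$, which is not justified: the hypothesis gives only a degree-$2$ invertible $u$ in $QH^*(X)$ for an \emph{arbitrary} closed monotone $X$, and there is no reason $u^n$ (or its image under $\CO$) should be a scalar multiple of $T$. Your ``main obstacle'' paragraph then leans on this unjustified relation and on a symbol argument that does not make sense as stated ($\CO(u)$ has degree $2$, and over $\Q$ one has $H^2(\PSU(n);\Q)=0$, so the leading symbol is zero rather than a class of ``positive topological degree $q$''). The fix is precisely the wide/narrow dichotomy you omitted: once you know non-narrow implies wide, the Poincar\'e polynomial is determined and no extra multiplicative input is needed.
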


This completes partial computations \cite[Theorem 8, Proposition 27]{Iriyeh}, \cite[Example 7.2.3]{EL2} of a family of Lagrangian $\PSU(n)$'s in $\C\P^{n^2-1}$.

\Cref{secTrivialVBs} begins with the following easy result.

\begin{mthm}[\cref{COChern}]
\label{mCOChern}
If $E \rightarrow X$ is a complex vector bundle whose restriction to $L$ is trivial, and $j < N_L / 2$, then we have $\CO(c_j(E)) = 0$, where $c_j(E)$ is the $j$th Chern class of $E$.
\end{mthm}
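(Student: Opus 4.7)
The plan is to combine the fact that $\CO$ is a graded $\Lambda$-algebra homomorphism, with $T$ in degree $N_L$, with the identification of the $T^0$-component of $\CO$ with the classical restriction map $H^*(X;\K) \to H^*(L;\K)$.

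First I would note that since $c_j(E)$ has degree $2j$, its image $\CO(c_j(E))$ lies in $HF^{2j}(L^\flat, L^\flat; \Lambda)$. Realising Floer cohomology via the pearl complex $C^*(L;\K)\otimes_\K\Lambda$ described in \cref{sscSetup}, the chain-level closed--open map is defined by counting pseudoholomorphic discs whose interior marked point is constrained by a cycle representing the input class; because disc symplectic areas are non-negative, its output on any cocycle takes the form $\sum_{k\geq 0} T^k\gamma_k$ with $\gamma_k \in C^{2j-kN_L}(L;\K)$.

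The hypothesis $j < N_L/2$ gives $2j < N_L$, so the only admissible index in this sum is $k=0$. Thus $\CO(c_j(E))$ is represented at the chain level by its $T^0$-component $\gamma_0$ alone. This component counts constant discs, and the standard identification (the $T^0$-part of $\CO$ agrees with the topological restriction, as recorded in \cref{secPrerequisites} in the description of the Oh spectral sequence) shows that $\gamma_0$ represents $c_j(E)|_L = c_j(E|_L)$ in $H^{2j}(L;\K)$. Since $E|_L$ is trivial, this class vanishes, and hence so does $\CO(c_j(E))$.

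There is no serious obstacle: the argument is essentially a degree count, and the only non-formal ingredient is the identification of the lowest-order term of $\CO$ with the classical restriction map, which is standard and follows immediately from the definition of $\CO$ together with the fact that a disc of zero area is constant.
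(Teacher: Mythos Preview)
Your proof is correct and follows essentially the same route as the paper. The paper packages the degree argument as the statement (from \cref{sscFloerMorse}) that the PSS map $H^{<N_L}(L;\K)\to HF^*(L^\flat,L^\flat;\Lambda)$ intertwines $\CO$ with classical restriction, and then simply observes that $c_j(E)$ restricts to $c_j(E|_L)=0$; you instead unpack this intertwining directly at the chain level via the filtration by $T$-exponent, which is exactly how that PSS statement is proved.
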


Although this result is not especially noteworthy in itself, it is surprisingly useful, and we demonstrate this by considering a family of Lagrangians $L \subset \Gr(n-k, n) \times \C\P^{kn-1}$ diffeomorphic to the projective Stiefel manifold parametrising projective $k$-frames in $\C^n$.  These appear in \cite{ZapolskyGrassmannians} and specialise to the above family of $\PSU(n)$'s in the case $k=n$.  We show

\begin{mthm}[\cref{thmProjStiefel}]
\label{mthmProjStiefel}
If $p$ denotes $\Char \K$ and $p^r$ its greatest power dividing $n$ (interpreted as $1$ if $p=0$) then either: $k \leq p^r$, in which case $L$ is wide for all choices of relative spin structure and flat line bundle; or $k > p^r$, in which case $L$ is narrow for all such choices.
\end{mthm}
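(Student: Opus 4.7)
The plan is to apply \cref{mCOChern} to a rank-$k$ complex vector bundle on $Y := \Gr(n-k,n) \times \C\P^{kn-1}$ whose restriction to $L$ is trivial, push the Siebert--Tian quantum relations through $\CO$, and extract the dichotomy from Lucas's theorem on binomial coefficients modulo $p$. I may assume $1 \leq k < n$, since the case $k = n$ reduces to \cref{corLagPSU}.

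First I would identify the trivial bundle. A point of $L$ corresponds to a projective $k$-frame $[(v_1,\ldots,v_k)]$ in $\C^n$, mapped to $\mathrm{span}(v_i) \in \Gr(k,n) \cong \Gr(n-k,n)$ and to $[(v_1,\ldots,v_k)] \in \C\P^{kn-1}$. Writing $S$ for the tautological rank-$k$ subbundle on $\Gr(k,n)$ and $\mathcal{O}(1)$ for the hyperplane bundle on $\C\P^{kn-1}$, the sections $v_i \otimes v^*$ of $S \boxtimes \mathcal{O}(1)$ (with $v = (v_1,\ldots,v_k) \in \C^{kn}$) are invariant under the diagonal $S^1$-action, so $(S \boxtimes \mathcal{O}(1))|_L$ is trivial; equivalently, $c(S)|_L = (1-y)^k$ in $H^*(L;\K)$, where $y = c_1(\mathcal{O}(1))|_L$. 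Combined with the bound $N_L = 2n > 2k$ (which should follow from the construction of $L$), \cref{mCOChern} then yields $\CO(c_j(S)) = (-1)^j \binom{k}{j} y^j$ in $HF^*$ for $0 \leq j \leq k$ (abusing notation by writing $y$ for $\CO(y)$).

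Next I would push the Siebert--Tian presentation of $QH^*(\Gr(k,n))$ through $\CO$: with $\xi_1,\ldots,\xi_k$ the Chern roots of $S$, the defining relations are $h_j(\xi) = 0$ for $n-k+1 \leq j \leq n-1$ and $h_n(\xi) = (-1)^{k-1} T_{\Gr}$, where $h_j$ is the $j$-th complete homogeneous symmetric polynomial. Substituting $\xi_i = -y$ as forced by step 1, and using $h_j(-y,\ldots,-y) = (-y)^j \binom{j+k-1}{k-1}$, these translate to
\[
\binom{j+k-1}{k-1}\, y^j = 0 \quad (n-k+1 \leq j \leq n-1), \qquad \binom{n+k-1}{k-1}\, y^n = \epsilon\, T_{\Gr}
\]
in $HF^*$ for some unit $\epsilon$, together with $y^{kn} = T_{\C\P}$ inherited from $QH^*(\C\P^{kn-1})$.

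Lucas's theorem then supplies the dichotomy. Writing $n = p^r m'$ with $p \nmid m'$: if $k \leq p^r$ then $k-1 < p^r$ has all base-$p$ digits below position $r$, and for any $j \in [n-k+1, n-1]$ one has $j + k - 1 - n \leq k - 2 < k - 1$, so $j+k-1$ cannot digit-dominate $k-1$, hence $\binom{j+k-1}{k-1} \equiv 0 \pmod p$, making the first family of relations vacuous; meanwhile $\binom{n+k-1}{k-1} \equiv 1 \pmod p$ gives the single non-trivial relation $y^n = \epsilon\, T_{\Gr}$, compatible with the Poincar\'e polynomial of $H^*(PV_k(\C^n);\K)$, and unitality of $\CO$ together with a rank count then forces Oh's spectral sequence to collapse, yielding wideness. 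If instead $k > p^r$, a case analysis using Lucas shows that at least one $\binom{j+k-1}{k-1}$ for $j \in [n-k+1, n-1]$ is a unit in $\K$ (for example, when $p^r < k \leq 2p^r$ the choice $j = n-p^r$ gives $\binom{n+k-1-p^r}{k-1} \equiv m' \not\equiv 0 \pmod p$), forcing $y^j = 0$ in $HF^*$ for some $j \leq kn$; composing with $y^{kn} = T_{\C\P}$ then gives $T_{\C\P} = 0$ in $HF^*$, hence $HF^* = 0$, i.e.\ narrowness. Independence of the relative spin structure $s$ and flat line bundle $\mathcal{L}$ is automatic, since these choices can only perturb the relations by $\K$-units, which cannot alter modular vanishing of binomial coefficients. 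The principal obstacle I foresee is bookkeeping: verifying the Maslov bound $N_L = 2n$, tracking signs and Novikov exponents through the Siebert--Tian substitution, and upgrading the consistency checks into a genuine proof of spectral-sequence collapse in the wide case.
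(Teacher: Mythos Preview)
Your narrowness argument shares the paper's core idea---identify the trivial bundle $E(1)=S\boxtimes\mathcal{O}(1)$ on $L$ and push relations through $\CO$---but you take a less direct route. The paper works with the relation $c(E(1))\qcup c(F(1))=(1+H)^n$ (valid in degrees $<2n$), so that triviality of $E(1)|_L$ gives $\CO(c_j(F(1)))=\binom{n}{j}\CO(H)^{\qcup j}$, and $\rank F(1)=n-k$ then forces $\binom{n}{j}\CO(H)^{\qcup j}=0$ for $n-k+1\le j\le n-1$. The single choice $j=n-p^r$ then works uniformly, since $\binom{n}{p^r}$ is a $p$-unit by Lucas. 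Your Siebert--Tian substitution instead produces the coefficients $\binom{j+k-1}{k-1}$, and showing one of these is a $p$-unit for general $k>p^r$ is genuinely more delicate than the single case $p^r<k\le 2p^r$ you treat; the ``case analysis'' you defer is real work that the paper's formulation avoids entirely. (You could recover the paper's binomials from your setup by noting $c(F(1))|_L=(1+y)^n$ directly.)

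Your wideness argument has a real gap, which you partly acknowledge. Verifying that the pushed-forward relations are \emph{consistent} (the binomials vanish mod $p$) does not bound $HF^*$ from below: you have no reason to know that $1,y,y^2,\dots$ are linearly independent in $HF^*$, nor that $\CO$ hits enough of $HF^*$ for a ``rank count'' to bite. The paper's argument is entirely different here and does not use $\CO$ at all: it invokes the Biran--Cornea criterion (\cref{BCWideNarrow}), after citing the known computation of $H^*(PV_k(\C^n);\K)$ to check that when $k\le p^r$ this algebra is generated in degrees $<2n-1=N_L-1$. That single cohomological input forces the Oh spectral sequence to degenerate; your proposal has no substitute for it.
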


The wideness result was known by Zapolsky but the narrowness part is new.

Finally, in \cref{secGysin,secQuilts} we study a monotone Lagrangian embedding of $\mathrm{SO(3)}$ in $(\C\P^1)^3$ and of the lens space $L(4,1)$ in $\C\P^2\times\C\P^1$.  The former was studied in \cite[Section 5]{SmDCS}, where we showed that it is narrow unless $\Char \K$ is $3$ or $5$, depending on the choice of relative spin structure.  We also showed that it is wide in the $\Char \K = 3$ case.  We now show

\begin{mthm}[\cref{P13Wide}, \cref{sscQuiltI}]
The Lagrangian $\mathrm{SO}(3)$ in $(\C\P^1)^3$ is also wide in the $\Char \K = 5$ case.
\end{mthm}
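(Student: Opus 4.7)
The plan is to apply Perutz's symplectic Gysin sequence, which is the main apparatus of \cref{secGysin}. We realise $L = \mathrm{SO}(3) \hookrightarrow X = (\C\P^1)^3$ as arising from a symplectic reduction, so that $L$ is the total space of a circle bundle over a base Lagrangian $L_{\mathrm{red}} \cong S^2$ inside a reduced symplectic manifold $X_{\mathrm{red}}$. Perutz's sequence then takes the form
\[
\cdots \to HF^{*}(L_{\mathrm{red}}^\flat; \Lambda) \to HF^{*}(L^\flat; \Lambda) \to HF^{*-1}(L_{\mathrm{red}}^\flat; \Lambda) \xrightarrow{\;\cdot\, e\;} HF^{*+1}(L_{\mathrm{red}}^\flat; \Lambda) \to \cdots,
\]
where $e \in HF^2(L_{\mathrm{red}}^\flat; \Lambda)$ is a Floer-theoretic Euler class associated with the reduction.

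The first task is to identify $HF^*(L_{\mathrm{red}}^\flat; \Lambda)$: this is the Floer cohomology of a Lagrangian sphere in a low-dimensional reduced space, and is accessible through the quantum cohomology of $X_{\mathrm{red}}$ together with the closed--open map $\CO$. With that computed, wideness of $L^\flat$ in $\Char \K = 5$ becomes equivalent to the vanishing of multiplication by $e$ in that characteristic. The class $e$ is extracted from a count of low-Maslov discs with boundary on the upstairs Lagrangian, with signs dictated by the relative spin structure singled out in \cite{SmDCS}; the relevant integer coefficient turns out to be divisible by $5$, so $e$ vanishes over $\K$ with $\Char \K = 5$ and the Gysin sequence splits into short exact sequences. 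Summing ranks then yields an isomorphism $HF^*(L^\flat, L^\flat; \Lambda) \cong H^*(\mathrm{SO}(3); \K) \otimes_\K \Lambda$ of graded $\Lambda$-modules, establishing wideness.

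The independent argument in \cref{sscQuiltI} recasts the same computation using Wehrheim--Woodward's quilted Floer cohomology: $\mathrm{SO}(3)$ is expressed as the geometric composition of a simpler Lagrangian with a Lagrangian correspondence out of $(\C\P^1)^3$, and the quilt invariance theorem transports the calculation to a lower-dimensional target where the answer can be read off directly. The principal obstacle in either approach is pinning down the precise divisibility by $5$ of the governing disc count with the correct sign conventions coming from the chosen relative spin structure; the two methods provide mutually reinforcing checks, with the extra benefit of illustrating each technique on a non-trivial example.
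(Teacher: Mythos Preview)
Your framing of the Gysin argument is off in a way that matters. In Perutz's sequence as used in the paper, the projection $\pr_N$ exhibits $L$ as a circle bundle over $N = \C\P^1$ (the third factor) \emph{itself}, not over a Lagrangian sphere in some reduced space; the two non-$HF^*(L)$ vertices of the exact triangle are $QH^*(\C\P^1, b_N; \Lambda)$, not Lagrangian Floer groups. The horizontal map is quantum multiplication by $\widehat{e} = e + \nu T$, where $e = 2H_3$ is the classical Euler class of $L \to \C\P^1$ and $\nu$ is a count of index-$2$ discs against a global angular chain. In the basis $1, H_3$ this map has matrix
\[
\begin{pmatrix} \nu T & 2\eps_1\eps_2 T^2 \\ 2 & \nu T \end{pmatrix},
\]
so $HF^*(L) \neq 0$ iff the determinant $(\nu^2 - 4\eps_1\eps_2)T^2$ vanishes in $\K$. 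Note that $\widehat{e}$ itself does \emph{not} vanish in characteristic $5$ (its $H_3$-coefficient is $2$), so your statement ``$e$ vanishes over $\K$'' is not what happens; it is the determinant of the multiplication map that vanishes.

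The paper's key trick is that it never computes $\nu$: for any integer $\nu$ and any signs $\eps_j$, the integer $\nu^2 - 4\eps_1\eps_2$ is never $\pm 1$, so there is always \emph{some} prime in which the map degenerates and $HF^*(L) \neq 0$. The separate narrowness lemma (\cref{P13Narrow}) then forces this prime to be $3$ when the $\eps_j$ agree and $5$ when they do not. Your plan to exhibit a specific coefficient divisible by $5$ could in principle work (one can show $\nu = \eps_3 - \eps_1 - \eps_2$, giving $\nu^2 - 4\eps_1\eps_2 = 5$ when the $\eps_j$ are not all equal), but this requires actually constructing a global angular chain and counting discs against it, which you have not indicated how to do. For the quilt argument, you have the roles reversed: $L$ \emph{is} the correspondence $L_{01}$ from $(\C\P^1 \times \C\P^1)^-$ to $\C\P^1$, and it is composed with the equator $L_1 \subset \C\P^1$ to yield the Chekanov torus $L_0 = \TCh \subset \C\P^1 \times \C\P^1$; one then uses the known index-$2$ disc classes on $\TCh$ together with \eqref{eqInd2Sum} to determine the induced relative spin structure and read off the characteristic.
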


We give two proofs of this result.  In \cref{sscGysinI} we use Perutz's Gysin sequence, viewing the Lagrangian as a circle bundle over the final $\C\P^1$ factor.  This expresses $HF^*$ as the cone on a quantum version of `cupping with the Euler class' $H^*(\C\P^1)\rightarrow H^*(\C\P^1)$.  In \cref{sscQuiltI} meanwhile, we use quilt theory (summarised in \cref{sscQuiltSummary}) to relate $L$ to the Chekanov torus in $\C\P^1\times \C\P^1$.  The Floer theory of the latter is well-understood, and using a result of Wehrheim and Woodward \cite[Theorem 6.3.1]{QuiltedHF} we transfer this knowledge to $L$.  This requires computing a certain induced relative spin structure, which we do by an indirect method, and the conditions on $\Char \K$ emerge.

We apply the same methods to the Lagrangian lens space to obtain

\begin{mthm}[\cref{P2P1Wide}, \cref{sscCP2}]
The Lagrangian $L(4,1)$ in $\C\P^2 \times \C\P^1$ is wide when $\Char \K$ is $7$ or $3$, depending on the choice of relative spin structure, and is narrow otherwise.
\end{mthm}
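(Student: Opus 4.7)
The plan is to follow the same two-pronged strategy used for the $\mathrm{SO}(3)$ Lagrangian in $(\C\P^1)^3$: in \cref{secGysin} (leading to \cref{P2P1Wide}) I would apply Perutz's symplectic Gysin sequence, and in \cref{secQuilts} (specifically \cref{sscCP2}) I would give an independent proof via quilt theory, with the agreement of the two answers serving as a non-trivial consistency check.

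For the Gysin approach, $L(4,1)$ is the total space of a circle bundle over $S^2 = \C\P^1$ with Euler number $4$, and this structure is realised geometrically inside $\C\P^2 \times \C\P^1$ by the projection to the final factor. Perutz's exact triangle then presents $HF^*(L^\flat, L^\flat; \Lambda)$ as the mapping cone of a $\Lambda$-linear self-map on the Floer cohomology of the base, given by quantum cupping with a class whose classical part is $4$ times the generator of $H^2(\C\P^1;\K)$ and whose quantum correction is a signed count of Maslov-$2$ discs on $L$. Wideness is equivalent to this Euler class being nilpotent and narrowness to its being a unit in the (quantised) base ring; the two possible relative spin structures produce two candidate Euler classes, and reducing modulo primes should show that each is nilpotent in precisely one characteristic, namely $7$ or $3$. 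For the quilt approach, I would realise $L(4,1) \subset \C\P^2 \times \C\P^1$ as the geometric composition of a Lagrangian correspondence from $\C\P^2$ with a well-understood monotone Lagrangian there (the Chekanov or Clifford torus is the natural candidate, as its superpotential is completely explicit), then transfer its Floer calculation via \cite[Theorem 6.3.1]{QuiltedHF}. The characteristics $\Char\K \in \{3, 7\}$ then emerge as those for which the transferred superpotential admits a critical point.

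The main obstacle, as in the $\mathrm{SO}(3)$ case, is identifying the relative spin structure on $L(4,1)$ induced by the quilt: the correspondence involved is geometrically more intricate than the one producing $\mathrm{SO}(3) \subset (\C\P^1)^3$, and the Euler number $4$ introduces more subtle sign behaviour than the Euler number $2$ does. I would determine the induced spin structure indirectly, matching a priori computations of $\CO$ on the two sides of the quilt and invoking \cref{mCOChern} together with the $QH^*(\C\P^2 \times \C\P^1)$-module structure on $HF^*$ to cut the possibilities down to the correct one, rather than attempting a direct framing comparison.
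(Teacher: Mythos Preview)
Your quilt setup has the roles reversed. The lens space $L(4,1) \subset \C\P^2 \times \C\P^1$ is not obtained as a geometric composition; it \emph{is} the correspondence $L_{01}$ from $X_0 = \C\P^2$ to $X_1 = \C\P^1$. One composes it with the equator $L_1 \subset \C\P^1$ (not with a torus in $\C\P^2$) to produce the Chekanov torus $L_0 = \TCh \subset \C\P^2$. This matters because \cite[Theorem~6.3.1]{QuiltedHF} is one-directional: non-vanishing of $HF^*(L_0,L_0)$ implies non-vanishing of $HF^*(L_{01},L_{01})$, not conversely. With $L(4,1)$ cast as the composition, the theorem would say nothing about it. Also, the indirect determination of the induced relative spin structure is carried out not via $\CO$ or \cref{mCOChern} but via the additivity formula $w(L_0)+w(L_{01})+w(L_1)=0$ for signed index~$2$ disc counts: knowing $w(L_{01})$ and $w(L_1)$ fixes $w(\TCh)$, and this constrains the signs on the five Chekanov--Schlenk/Auroux discs enough to read off the boundary sum ($7\pd D_1$ or $9\pd D_1$), hence the characteristic.

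On the Gysin side your plan implicitly requires computing the quantum correction $\nu$ to the Euler class for each relative spin structure, since you propose reading off both wideness and narrowness directly from $\widehat{e}=4H_3+\nu T$. The paper avoids this entirely. Narrowness is proved first, independently of the Gysin sequence, by cubing $\CO(2H_1)=\eps_3 T\cdot 1_L$ against $H_1^{\qcup 3}=\eps_1 T^3$ to force $8\eps_1=\eps_3$ in $\K$, so $\Char\K\in\{3,7\}$ according to the sign of $\eps_1\eps_3$. Wideness then follows from the Gysin triangle without ever evaluating $\nu$: the determinant of $\widehat{e}\qcup$ is $(\nu^2-16)T^2$, and since $\nu^2-16$ is never $\pm 1$ for an integer $\nu$, it always has a prime factor; thus $HF^*\neq 0$ in \emph{some} characteristic, which by the narrowness lemma must be the predicted one. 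If you intend to compute $\nu$ directly instead, you should say how.
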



\subsection{Acknowledgements}

Much of this work was carried out whilst I was an EPSRC-funded PhD student in Cambridge, and I am grateful to my supervisor, Ivan Smith, for all of his help over my time there (one small part of which was the suggestion of applying the Gysin sequence to the examples in \cref{secGysin}).  I also thank Tim Perutz, Nick Sheridan, Brunella Torricelli, Chris Woodward, and Frol Zapolsky for helpful discussions.  I am supported by EPSRC grant [EP/P02095X/1].


\section{Floer theory prerequisites}
\label{secPrerequisites}


\subsection{Monotonicity}
\label{sscMonotonicity}

Recall that a Lagrangian $L \subset (X, \omega)$ is monotone if there exists a positive real number $\lambda_L$ such that $\omega = \lambda_L \mu$ as homomorphisms $\pi_2(X, L) \rightarrow \R$, where $\omega$ is the area and $\mu$ the Maslov index.  Similarly $X$ is monotone if there exists a positive $\lambda_X$ such that $\omega = 2\lambda_X c_1(X)$ as homomorphisms $\pi_2(X) \rightarrow \R$.  Assuming $\pi_2(X)\neq 0$, which will always be the case in our examples, monotonicity of $L$ implies montonicity of $X$ with $\lambda_L = \lambda_X$; we call this common value the \emph{monotonicity constant} of $X$ or $L$.  This is because $\mu$ restricts to $2c_1(X)$ on $\pi_2(X)$.  By the long exact sequence of the pair $(X, L)$, if $X$ is monotone and $\pi_1(L)$ is torsion then $L$ is also monotone.


\subsection{Relative spin structures}
\label{sscRelSpin}

A relative spin structure on a Lagrangian orients the moduli spaces of holomorphic discs which are counted by the Floer differential.  The mechanics of this do not concern us, but we will need to manipulate relative spin structures and how they change signs.

First we recap the basic definitions.  Suppose $V$ is an orientable real vector bundle $V$ over a space $M$.  We shall assume $\rank V \geq 2$, since ranks $0$ and $1$ require special treatment but are rather trivial.  The second Stiefel--Whitney class $w_2(V)$ is the obstruction to lifting the $\mathrm{GL}_+$-frame bundle of $V$ to a principal $\mathrm{GL}_+^\sim$-bundle, where $\mathrm{GL}_+$ is the orientation-preserving subgroup of $\mathrm{GL}(\rank V, \R)$ and $\mathrm{GL}_+^\sim$ is its unique connected double cover.  $V$ is spin if and only if $w_2(V)=0$, and in this case a spin structure is a choice of such a lift of the frame bundle.  The set of spin structures forms a torsor for $H^1(M; \Z/2)$.  If $M$ is an orientable manifold and $V=TM$ then we talk simply of $w_2(M)$ and spin structures on $M$.

\begin{defn}[{\cite[pp.~675--676]{FOOObig}, as reformulated in \cite[Remark 3.1.3, Proposition 3.1.5(b)]{QuiltOr}}]
An orientable submanifold $M$ of a manifold $N$ is \emph{relatively spin} if there exists a class $b$ in $H^2(N; \Z/2)$ with $b|_M = w_2(M)$.  In this case, a \emph{relative spin structure} on $M$ comprises a choice of $b$ (the background class) and an equivalence class of \v{C}ech $1$-cochain describing a principal $\mathrm{GL}_+^\sim$-bundle, with cocycle condition twisted by $b$, which lifts the frame bundle.  Here we are viewing the coefficient group $\Z/2$ of $b$ as the deck group of the covering $\mathrm{GL}_+^\sim \rightarrow \mathrm{GL}_+$.  The set of relative spin structures forms a torsor for $H^2(N, M; \Z/2)$.  For an orientable vector bundle $B$ on $N$, a relative spin structure with background class $w_2(B)$ is equivalent to a spin structure on $B|_M \oplus TM$.  We will usually be interested in the case where $N$ is symplectic and $M$ Lagrangian.
\end{defn}

The construction of orientations on holomorphic disc moduli spaces is described in \cite[Chapter 8]{FOOObig}.  All we need to know is how changing relative spin structure affects these orientations.

\begin{lem}[{de Silva \cite[Theorem Q]{VdS}, Cho \cite[Theorem 6.4]{ChoCl}, Fukaya--Oh--Ohta--Ono \cite[Proposition 8.1.16]{FOOObig}}]
\label{SpinChangeProp}
If two relative spin structures differ by a class $\eps \in H^2(X, L; \Z/2)$ then the associated orientations on the moduli space of discs in class $A \in H_2(X, L; \Z)$ differ by $(-1)^{\ip{\eps}{A}}$.\hfill$\qed$
\end{lem}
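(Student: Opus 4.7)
The strategy is to use the long exact sequence
\[
H^1(L; \Z/2) \xrightarrow{\delta} H^2(X, L; \Z/2) \xrightarrow{j^*} H^2(X; \Z/2)
\]
to decompose a general class $\eps \in H^2(X, L; \Z/2)$ into a spin-lift twist $\delta\tau$ for some $\tau \in H^1(L; \Z/2)$ plus a class $\eta$ lifting $\beta = j^*\eps \in H^2(X; \Z/2)$. Since both sides of the claimed formula are homomorphisms $H^2(X, L; \Z/2) \to \{\pm 1\}$ (the left because successive modifications of the relative spin structure compose by adding the corresponding classes in $H^2(X, L; \Z/2)$ and multiplying signs, the right by bilinearity of the Kronecker pairing), it suffices to verify the formula for each of the two types of component.

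For a twist by $\tau$, I would follow the orientation construction on $\det D_u$ from \cite[Chapter 8]{FOOObig}: the loop of Lagrangian subspaces $(\partial u)^* TL$ is deformed to a constant reference Lagrangian in a stabilised symplectic bundle, a process that requires a spin trivialisation of $TL \oplus B|_L$ restricted to $\partial D$, where $B$ is a vector bundle with $w_2(B) = b$. Twisting the relative spin structure by $\tau$ alters this trivialisation on $\partial D$ by the mod-$2$ monodromy $\ip{\tau}{[\partial u]}$; since the non-trivial element of $\pi_1 \mathrm{GL}_+$ is the deck transformation of $\mathrm{GL}_+^\sim \to \mathrm{GL}_+$, such a change flips the orientation of the index exactly when the monodromy is non-trivial. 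The sign change is therefore $(-1)^{\ip{\tau}{\partial A}}$, which equals $(-1)^{\ip{\delta\tau}{A}}$ by the Kronecker duality between $\delta$ and $\partial$.

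For a background-class change, choose a real vector bundle $B'$ over $X$ with $w_2(B') = \beta$; since $\beta|_L = 0$, the restriction $B'|_L$ is spin, and a choice of spin structure on $B'|_L$ specifies a particular lift $\eta \in H^2(X, L; \Z/2)$ of $\beta$. Running the FOOO orientation construction on $TL \oplus (B \oplus B')|_L$ in place of $TL \oplus B|_L$ introduces an extra factor given by the orientation of the Cauchy--Riemann index on $u^*B'$ over $D$ with totally real boundary condition $B'|_{\partial u}$. This factor is controlled by the obstruction (an element of $H^2(D, \partial D; \Z/2)$) to extending the boundary trivialisation over the disc, and evaluating on $[D, \partial D]$ yields the sign $(-1)^{\ip{\eta}{A}}$.

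The main technical obstacle is confirming that the two types of modification contribute independently, so that when $\eps$ is decomposed as $\delta\tau + \eta$ the sign change factors as the product of the twist and background signs, giving $(-1)^{\ip{\eps}{A}}$ in general. This reduces to multiplicativity of the FOOO orientation construction under direct-sum decompositions of the stabilising bundle and its functoriality under the \v{C}ech-cochain moves implementing the long exact sequence above; both hold by general theory but require careful sign bookkeeping at the cochain level.
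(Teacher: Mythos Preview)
The paper does not prove this lemma: it is stated with attributions to de Silva, Cho, and Fukaya--Oh--Ohta--Ono and closed immediately with a $\qed$ symbol, so there is no proof in the paper to compare your proposal against. Your sketch is a reasonable outline of how one would extract the result from the FOOO orientation construction, but since the paper treats the statement as a black-box citation, any comparison of approaches is moot.

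One technical caveat on your sketch: the step where you ``choose a real vector bundle $B'$ over $X$ with $w_2(B') = \beta$'' is not completely free---you need to know that every class in $H^2(X;\Z/2)$ arises as $w_2$ of some oriented bundle (true, e.g.\ via $\mathrm{SO}(3)$-bundles, but worth a sentence). More substantively, your final paragraph correctly flags that the real content lies in the multiplicativity and functoriality of the FOOO orientation under direct sums and \v{C}ech moves; this is exactly the ``careful sign bookkeeping'' that the cited references carry out, and your decomposition via the long exact sequence would ultimately have to be reconciled with their direct cochain-level argument rather than replacing it.
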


Similarly, the effect of a background class $b$ on quantum cohomology is to modify counts of rational curves in class $A \in H_2(X; \Z)$ by $(-1)^{\ip{b}{A}}$.


\subsection{Floer theory as a deformation of Morse theory}
\label{sscFloerMorse}

Given a monotone Lagrangian $L \subset X$ as in \cref{sscSetup}, its Floer theory can be regarded as a deformation of Morse theory in the following sense.  Biran--Cornea \cite{BCQS} construct a `pearl' model for $HF^*$ whose underlying chain complex is the Morse complex of a Morse function on $L$, tensored with the ring $\Lambda$.  The differential, product, and closed--open map only involve non-negative powers of $T$, so respect the filtration of the complex by $T$-exponent, and at the associated graded level (i.e.~the $T^0$ terms) they coincide with the corresponding classical operations: the Morse differential, the cup product, and the Morse restriction map $H^*(X) \rightarrow H^*(L)$ (recall that $H^*(X; \Lambda)$ is canonically identified with $H^*(X; \K) \otimes_\K \Lambda$ as a $\Lambda$-module).

The spectral sequence induced by this filtration is precisely the Oh spectral sequence, and the $r$th page differential encodes the $T^r$ terms.  In particular, it maps the surviving elements of $H^*(L; \K) \otimes \Lambda$ to $H^{*+1-rN_L}(L; \K) \otimes \Lambda$.  Combining grading considerations in this spectral sequence with the multiplicative structure, one obtains the following well-known result of Biran--Cornea.

\begin{prop}[{\cite[Proposition 6.1.1]{BCQS}}]\label{BCWideNarrow}
If $H^*(L; \K)$ is generated as an algebra by $H^{\leq m}(L; \K)$ with $m \leq N_L-1$ then $L^\flat$ is either wide or narrow, and only the former can occur if the inequality is strict.\hfill$\qed$
\end{prop}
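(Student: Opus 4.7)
The plan is to exploit the multiplicative Oh spectral sequence of \Cref{sscFloerMorse}: fix algebra generators $\alpha_1,\dots,\alpha_n$ of $H^*(L;\K)$ with $|\alpha_i|\leq m$, and recall that each $d_r$ is a $\Lambda$-linear derivation of total degree $+1$ sending $H^k(L;\K)\otimes T^q$ into $H^{k+1-rN_L}(L;\K)\otimes T^{q+r}$. Applied to $\alpha_i$, the target sits in cohomological degree $|\alpha_i|+1-rN_L\leq m+1-rN_L\leq (1-r)N_L$, which is strictly negative for $r\geq 2$. Hence $d_r(\alpha_i)=0$ for all $r\geq 2$, and the Leibniz rule extends this to every polynomial expression in the $\alpha_i$, and therefore to every element of $E_r$, since each class there is represented in $E_1$ by such a polynomial. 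Thus the spectral sequence collapses at $E_2=H(E_1,d_1)$.

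The same degree count forces $d_1(\alpha_i)\in H^{|\alpha_i|+1-N_L}(L;\K)\otimes T$ to vanish unless $|\alpha_i|=N_L-1$, in which case it lies in $H^0(L;\K)\otimes T=\K\cdot T$ by connectedness of $L$. Write $d_1(\alpha_i)=c_i T$ with $c_i\in\K$, so that $c_i=0$ whenever $|\alpha_i|<N_L-1$. If $m<N_L-1$ then every $c_i$ vanishes, so $d_1\equiv 0$ on generators and hence everywhere by derivation; this gives $E_\infty=E_1$, i.e.\ wideness, and proves the strict-inequality half of the statement.

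Suppose now $m=N_L-1$. If all $c_i$ vanish, the previous argument again yields wideness. Otherwise choose $j$ with $c_j\neq 0$ and set $\eta\coloneqq c_j^{-1}\alpha_j T^{-1}\in E_1$. Since $d_1(T)=0$, the Leibniz rule gives $d_1(\eta)=1$; applying Leibniz once more shows $d_1(\eta\,x)=x$ for any cycle $x$, so every cycle is a boundary. Hence $E_2=0$ and $L^\flat$ is narrow. The only real obstacle I anticipate is justifying that $d_r$ acts as a $\Lambda$-linear derivation on each $E_r$, but this is a standard feature of the pearl complex filtered by $T$-exponent that is implicit in the discussion of \Cref{sscFloerMorse}; beyond that, the argument is completely formal.
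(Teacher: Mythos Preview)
The paper does not supply its own proof of this proposition: it is quoted from Biran--Cornea with a terminal $\qed$, and the only indication of method is the preceding sentence about ``combining grading considerations in this spectral sequence with the multiplicative structure''. Your argument is exactly that combination, and it is correct in substance.

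One organisational point worth tightening: in your first paragraph you apply the Leibniz rule on $E_r$ for $r\geq 2$ before checking that the generators $\alpha_i$ actually survive to page $r$. Survival requires $d_1(\alpha_i)=0$, which you only analyse afterwards. The logic is cleanest if you handle $d_1$ first, as your second and third paragraphs effectively do: either every $c_i$ vanishes, in which case $d_1\equiv 0$ by the derivation property, $E_2=E_1$ as algebras, and then your degree estimate $|\alpha_i|+1-rN_L<0$ for $r\geq 2$ runs inductively to give $E_\infty=E_1$ and wideness; or some $c_j\neq 0$, and your contracting homotopy $\eta=c_j^{-1}\alpha_j T^{-1}$ with $d_1(\eta\, x)=x$ for every cycle $x$ shows $E_2=0$ and hence narrowness. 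With that reordering there is nothing missing.
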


Morse cocycles of index $< N_L$ are automatically Floer cocycles, and we obtain a `PSS' map $H^{< N_L}(L; \K) \rightarrow HF^*(L^\flat, L^\flat; \Lambda)$.  This intertwines $\CO^0$ with the restriction map $H^{< N_L}(X; \K) \rightarrow H^{<N_L}(L; \K)$


\section{Symplectic reduction}
\label{secReduction}

Our first observations relate to symplectic reduction.  After establishing some useful results about monotonicity and relative spin structures, including \cref{mlemRedMonotone}, we apply them to a family of Lagrangian flag varieties, and prove \cref{mHFFlagVariety}.


\subsection{Hamiltonian actions}

Recall that an action of a Lie group $K$ on $X$ is \emph{Hamiltonian}, with moment map $\mu : X \rightarrow \mathfrak{k}^*$ (not to be confused with the Maslov index!), if: the action preserves $\omega$; $\mu$ intertwines the $K$-action on $X$ with the coadjoint action on $\mathfrak{k}^*$; and $\mu$ generates the action in the sense that for all $\xi$ in $\mathfrak{k}$ we have
\[
\ip{\diff\mu}{\xi} = \omega(-, V_\xi),
\]
where $V_\xi$ is the vector field describing the action of $\xi$.  If $K$ is compact and acts freely on $\mu^{-1}(0)$, which implies that $0$ is a regular value of $\mu$, then the \emph{symplectic reduction} $X \sslash K$ is the quotient $\mu^{-1}(0)/K$ equipped with the unique symplectic form $\omega_{X \sslash K}$ whose pullback to $\mu^{-1}(0)$ coincides with $\omega|_{\mu^{-1}(0)}$.  If $L$ is a Lagrangian submanifold of $X$ contained in $\mu^{-1}(0)$, and is preserved setwise by the $K$-action, then $L/K$ defines a Lagrangian in $X \sslash K$.  We shall always assume that $K$ is connected, so in particular it acts in an orientation-preserving way.

We remark for later use that
\begin{lem}[{\cite[Proposition 1.3]{Ch}}]\label{OrbitsIsotropic}
$K$-orbits contained in $\mu^{-1}(0)$ are isotropic.\hfill$\qed$
\end{lem}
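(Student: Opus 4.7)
The proof is a direct application of the moment map defining equation, and really boils down to a single computation. Here is how I would lay it out.

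Fix a point $x$ on a $K$-orbit $\mathcal{O}$ contained in $\mu^{-1}(0)$. The tangent space $T_x \mathcal{O}$ is spanned by the values $V_\xi(x)$ as $\xi$ ranges over the Lie algebra $\mathfrak{k}$, so to prove isotropy it suffices to check that $\omega(V_\xi(x), V_\eta(x)) = 0$ for all $\xi, \eta \in \mathfrak{k}$. Applying the moment map equation $\ip{\diff \mu}{\xi} = \omega(-, V_\xi)$ with $V_\eta(x)$ in the free slot gives
\[
\omega(V_\eta(x), V_\xi(x)) = \ip{\diff \mu(V_\eta)(x)}{\xi}.
\]
Since $\mu$ vanishes identically on $\mathcal{O}$ and $V_\eta$ is tangent to $\mathcal{O}$, the directional derivative $\diff\mu(V_\eta)(x)$ is zero, and we are done.

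There is no real obstacle here; the only thing to be careful about is the bookkeeping conventions (sign, which slot $V_\xi$ sits in, and the distinction between $d\mu$ as a $\mathfrak{k}^*$-valued $1$-form versus its pairing with $\xi \in \mathfrak{k}$). The argument also makes transparent the stronger statement that the tangent space of any $K$-orbit lies in the $\omega$-kernel of $T\mu^{-1}(0)$: for $x \in \mu^{-1}(0)$ and $Y \in T_x \mu^{-1}(0)$ we have $\omega(Y, V_\xi(x)) = \ip{\diff\mu(Y)}{\xi} = 0$, which is the standard starting point for the Marsden--Weinstein construction used in \cref{mlemRedMonotone}. For the statement as given, though, I would keep the proof to the two lines above.
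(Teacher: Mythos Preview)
Your argument is correct and is the standard one-line proof of this fact. The paper itself gives no proof at all---it simply cites \cite[Proposition 1.3]{Ch} and closes with a \qed---so there is nothing to compare against; your write-up would serve perfectly well as a self-contained replacement for the citation.
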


To do Floer theory with $L/K$ we would like to understand when it is monotone and relatively spin, and these are the subjects of the next two subsections.


\subsection{Monotonicity for reductions}

Our goal is to relate monotonicity of symplectic reductions to monotonicity upstairs:

\begin{prop}
\label{lemRedMonotone}
Suppose $X$ is a symplectic manifold carrying a Hamiltonian action of a compact connected Lie group $K$, which acts freely on $\mu^{-1}(0)$.  If $L \subset X$ is a $K$-invariant monotone Lagrangian submanifold contained in $\mu^{-1}(0)$ then $L/K$ is monotone in $X \sslash K$ with $\lambda_{L/K}=\lambda_L$.
\end{prop}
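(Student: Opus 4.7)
The plan is to compare areas and Maslov indices for discs in $(X\sslash K, L/K)$ against those for suitable lifts to $(X,L)$, and then invoke monotonicity of $L$. Any disc $u:(D^2,\partial D^2)\to(X\sslash K,L/K)$ admits a lift $\tilde u:(D^2,\partial D^2)\to(\mu^{-1}(0),L)$: the pullback of the principal $K$-bundle $\mu^{-1}(0)\to X\sslash K$ along $u$ is trivial because $D^2$ is contractible, and $K$-invariance of $L$ gives $\pi^{-1}(L/K)=L$, so the boundary of any such lift necessarily lands in $L$. Since $\omega_{X\sslash K}$ pulls back to $\omega|_{\mu^{-1}(0)}$, the areas match: $\omega(\tilde u)=\omega(u)$. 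It then remains to establish the corresponding equality of Maslov indices, after which $\omega(u)=\omega(\tilde u)=\lambda_L\mu(\tilde u)=\lambda_L\mu(u)$ gives monotonicity of $L/K$ with $\lambda_{L/K}=\lambda_L$.

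For the Maslov comparison, fix a $K$-invariant $\omega$-compatible almost complex structure $J$ on $X$ and let $g:=\omega(-,J-)$. At $x\in\mu^{-1}(0)$, write $V_x\subset T_xX$ for the tangent space to the $K$-orbit through $x$. \Cref{OrbitsIsotropic} says $V_x$ is isotropic, so positive-definiteness of $g|_{V_x}$ forces $V_x\cap JV_x=0$ and similarly $V_x\cap J(V_x^\omega)=0$. Using $(JV_x)^\omega=J(V_x^\omega)$, a short dimension count then produces the $J$-invariant symplectic decomposition $T_xX=(V_x\oplus JV_x)\oplus W_x$, where $W_x:=V_x^\omega\cap J(V_x^\omega)$ projects isomorphically onto $V_x^\omega/V_x=T_{\pi(x)}(X\sslash K)$; transporting $J|_{W_x}$ through this isomorphism equips $X\sslash K$ with a compatible almost complex structure. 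For $x\in L$, the inclusions $V_x\subset T_xL\subset V_x^\omega=V_x\oplus W_x$ give $T_xL=V_x\oplus(T_xL\cap W_x)$, and the second summand is identified under the above isomorphism with $T_{\pi(x)}(L/K)$.

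Pulling this picture back along $\tilde u$ yields a complex bundle splitting $\tilde u^*TX\cong\tilde u^*(V\oplus JV)\oplus u^*T(X\sslash K)$, with a compatible Lagrangian splitting $V\oplus u^*T(L/K)$ along $\partial D^2$. The orbit bundle $V=\mu^{-1}(0)\times_K\mathfrak{k}$ becomes trivial on $D^2$ once we trivialise the pulled-back principal bundle, so $V\oplus JV$ is a trivial complex bundle on $D^2$ with real Lagrangian subbundle $V$; this summand contributes zero to the Maslov index, so $\mu(\tilde u)=\mu(u)$.

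The main obstacle is the splitting $T_xX=(V_x\oplus JV_x)\oplus W_x$ together with the claim that $W_x$ is both $J$-invariant and projects isomorphically to the reduced tangent space while intersecting $T_xL$ in the expected way. Everything else — lifting discs, comparing areas, handling the trivial complex summand — is essentially formal. All of the splitting content is linear-algebraic, relies only on isotropy of $V_x$ and $\omega$-compatibility of $J$, and combines with additivity of the Maslov index under direct sum to finish the argument.
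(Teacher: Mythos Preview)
Your proof is correct and follows essentially the same approach as the paper: both split $TX|_{\mu^{-1}(0)}$ as $(V\oplus JV)\oplus W$ with $W=V^\omega\cap J(V^\omega)$ (the paper's $E_Z=TZ\cap J(TZ)$ is exactly this subbundle, since $T_xZ=V_x^\omega$), and then use additivity of the Maslov index together with triviality of the orbit-direction summand. Your direct lift of discs via contractibility of $D^2$ is a mild streamlining of the paper's five-lemma argument establishing $p_*:\pi_2(\mu^{-1}(0),L)\xrightarrow{\sim}\pi_2(X\sslash K,L/K)$, but the core of the argument is identical.
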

\begin{rmk}
Any connected $K$-invariant Lagrangian automatically lies in $\mu^{-1}(0)$, possibly after shifting $\mu$ by a fixed point of the coadjoint action---see \cite[Lemma 4.1]{CGS}.  The monotonicity constants appearing in the statement $\lambda_{L/K}=\lambda_L$ may not be uniquely determined, in which case we mean that there exist choices for which equality holds.
\end{rmk}
\begin{proof}
First we claim that $p_* : \pi_2(\mu^{-1}(0), L) \rightarrow \pi_2(X\sslash K, L/K)$ is an isomorphism, where $p$ denotes the quotient-by-$K$ map.  We will prove this by repeated application of the five lemma, so to begin let $Z$ denote $\mu^{-1}(0)$, let $K_0$ be a $K$-orbit inside $L$, and for $j\geq 1$ consider the following diagram of homotopy groups (or pointed sets when the degree is low)
\[
\begin{tikzcd}
\pi_j(K_0) \arrow{r}\arrow{d} & \pi_j(L) \arrow{r}\arrow{d} & \pi_j(L, K_0) \arrow{r}\arrow{d}{p_*} & \pi_{j-1}(K_0) \arrow{r}\arrow{d} & \pi_{j-1}(L) \arrow{d}
\\ \pi_j(K) \arrow{r} & \pi_j(L) \arrow{r} & \pi_j(L/K) \arrow{r} & \pi_{j-1}(K) \arrow{r} & \pi_{j-1}(L)
\end{tikzcd}
\]
The unlabelled vertical maps are the obvious isomorphisms, the top row is from the long exact sequence of the pair, and the bottom row is from the long exact sequence of the fibration.  Note that a priori $\pi_1(L, K_0)$ is only a pointed set, but the $K$-action on $L$ makes it into a group so that $p_*$ is a homomorphism; the multiplication on $K$ and connectedness of $L$ ensure that $\pi_0(K)=\pi_0(K_0)$ and $\pi_0(L)=\{\text{point}\}$ are also groups, but we don't need this.  The squares all commute (the only one which requires a little thought is the third of the four) so the five lemma shows that $p_* : \pi_j(L, K_0) \rightarrow \pi_j(L/K)$ is an isomorphism for all $j \geq 1$.  Similarly we have isomorphisms $p_* : \pi_j(Z, K_0) \rightarrow \pi_j(Z/K = X \sslash K)$ for all $j \geq 1$.

Now consider the commutative diagram of homotopy groups (these really are groups, as above)
\[
\begin{tikzcd}
\pi_2(L, K_0) \arrow{r}\arrow{d}{p_*} & \pi_2(Z, K_0) \arrow{r}\arrow{d}{p_*} & \pi_2(Z, L) \arrow{r}\arrow{d}{p_*} & \pi_1(L, K_0) \arrow{r}\arrow{d}{p_*} & \pi_1(Z, K_0) \arrow{d}{p_*}
\\ \pi_2(L/K) \arrow{r} & \pi_2(X\sslash K) \arrow{r} & \pi_2(X\sslash K, L/K) \arrow{r} & \pi_1(L/K) \arrow{r} & \pi_1(X\sslash K)
\end{tikzcd}
\]
The top row is from the long exact sequence of the triple $(Z, L, K_0)$ whilst the bottom row is from the long exact sequence of the pair.  By the previous paragraph the vertical maps are all isomorphisms, except possibly the middle one, so applying the five lemma again we see that the middle map $p_* : \pi_2(Z, L) \rightarrow \pi_2(X\sslash K, L/K)$ is also an isomorphism, as claimed.

Our strategy now is to understand monotonicity of $L/K$ by lifting discs by $(p_*)^{-1}$, so take an arbitrary class $\beta$ in $\pi_2(X \sslash K, L/K)$ and choose a disc $u : (D, \pd D) \rightarrow (Z, L)$ with $[p \circ u] = \beta$.  Since $p^* \omega_{X \sslash K} = \omega_X |_{Z}$ we immediately have that $u$ and $\beta$ have equal areas.  We claim that they have equal Maslov indices, i.e.~that the bundle pairs $(u^*TX, u|_{\pd D}^* TL)$ and $(u^*p^*T(X\sslash K), u|_{\pd D}^* p^* T(L/K))$ have equal indices, and we shall do this by exhibiting the latter pair as a quotient of the former by a trivial sub-pair.

So fix an $\omega$-compatible almost complex structure $J$ on $X$.  For any non-zero $\xi \in \mathfrak{k}$ we have
\[
\ip{(JV_\xi) \lrcorner \diff \mu}{\xi} = \omega(JV_\xi, V_\xi) > 0,
\]
and hence $(JV_\xi) \lrcorner \diff \mu \neq 0$, so the subbundle $J(\mathfrak{k} \cdot Z)$ of $TX|_{Z}$ is sent fibrewise injectively to $\mathfrak{k}^*$ by $\diff \mu$ and thus provides a complementary subbundle to $TZ$.  This means that $\mathfrak{k} \cdot Z$ is a subbundle of $TZ$ which is complementary to $E_Z \coloneqq TZ \cap J(TZ)$, giving
\[
TX|_{Z} = (\mathfrak{k} \cdot Z) \oplus J(\mathfrak{k} \cdot Z) \oplus E_Z
\]
as real vector bundles over $Z$.  Letting $E_\mathfrak{k}$ denote the sum of the first two terms, we obtain a splitting $TX|_{Z} = E_\mathfrak{k} \oplus E_Z$ as \emph{complex} vector bundles.

Now let $F_\mathfrak{k}$ denote the totally real subbundle $\mathfrak{k} \cdot L = TL \cap E_\mathfrak{k}|_L$ of $E_\mathfrak{k}|_L$, and $F_Z$ the projection of $TL$ onto $E_Z|_L$ with kernel $F_\mathfrak{k}$.  The short exact sequence of pairs
\[
0 \rightarrow (E_{\mathfrak{k}}, F_\mathfrak{k}) \rightarrow (TX|_Z, TL) \rightarrow (E_Z, F_Z) \rightarrow 0
\]
then gives
\[
\mu(u^*TX, u|_{\pd D}^* TL) = \mu(u^*E_\mathfrak{k}, u|_{\pd D}^*F_\mathfrak{k}) + \mu(u^*E_Z, u|_{\pd D}^*F_Z).
\]
The first term on the right-hand side vanishes, since the bundle pair is trivialised by the action of $\mathfrak{k}$, and we are left to show that the second term is $\mu(u^*p^*T(X\sslash K), u|_{\pd D}^*p^*T(L/K))$.  To see that this is indeed the case, note that $E_Z$ projects isomorphically onto $p^*T(X\sslash K) = TX|_{Z} / E_\mathfrak{k}$ and that this projection identifies $F_Z$ with $p^*T(L/K)$.  Moreover, the complex structure on $E_Z$ is compatible with $p^*\omega_{X\sslash K}=\omega|_{TZ}$.  We conclude that $u$ and $\beta$ have equal indices and so if $L$ is monotone then $L/K$ is monotone with the same monotonicity constant.
\end{proof}


\subsection{Relative spin structures for reductions}

We would like to understand relative spin structures on reduced Lagrangians $L/K \subset X \sslash K$ in the setting of \cref{lemRedMonotone}, assuming also that $L$ is orientable.  These often (for example, if the $K$-action extends to a larger group $K'$ and $L$ is the zero set of the $K'$-moment map; see \cref{FlagRelSpin}) have the property that their normal bundle extends to $X \sslash K$, and in this case we can apply

\begin{lem}
\label{lemRedRelSpin}
Given a vector bundle $V$ on $N$ whose restriction to $M$ is identified with the normal bundle to $M$ in $N$, $M$ carries a natural relative spin structure with background class $w_2(V)+w_2(N)$.
\end{lem}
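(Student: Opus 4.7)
The plan is to invoke the reformulation recorded in the definition just before this lemma: for an orientable vector bundle $B$ on $N$, a relative spin structure on $M$ with background class $w_2(B)$ is equivalent to an honest spin structure on $B|_M \oplus TM$. Applying this with $B = V \oplus TN$ gives $w_2(B) = w_2(V) + w_2(N)$ by the Whitney formula (the cross term vanishes as both summands are orientable), so the problem reduces to constructing a natural spin structure on $(V \oplus TN)|_M \oplus TM$.

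Using $V|_M \cong \nu_M$ (the normal bundle of $M$ in $N$) and any splitting $TN|_M \cong TM \oplus \nu_M$ obtained from a tubular neighbourhood, I would rewrite
\[
(V \oplus TN)|_M \oplus TM \;\cong\; \nu_M \oplus TM \oplus \nu_M \oplus TM \;\cong\; E \oplus E,
\]
where $E \coloneqq TM \oplus \nu_M$. The task thus reduces to the following general fact: for any orientable real vector bundle $E$, its double $E \oplus E$ carries a canonical spin structure.

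For this, I would argue that the diagonal map $\Delta \colon \mathrm{GL}_+(n) \hookrightarrow \mathrm{GL}_+(2n)$, $g \mapsto g \oplus g$, admits a canonical lift to $\mathrm{GL}_+^\sim(2n)$ as a Lie group homomorphism. The obstruction is the image of $\pi_1(\mathrm{GL}_+(n)) \cong \Z/2$ in $\pi_1(\mathrm{GL}_+(2n)) \cong \Z/2$: the standard generator is represented by a loop of $2\pi$-rotations in a fixed $2$-plane, and under $\Delta$ it becomes a loop performing two simultaneous such rotations, which is twice the downstairs generator, hence zero. Therefore the pullback double cover over the connected group $\mathrm{GL}_+(n)$ is trivial, and the component of the identity furnishes a unique continuous section, automatically a group homomorphism. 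Composing the frame cocycle of $E$ with this lift exhibits the desired canonical spin structure on $E \oplus E$.

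The step requiring the most care is the naturality claim, namely that the resulting relative spin structure on $M$ is independent of the splitting of $TN|_M$. This follows from the contractibility of the space of such splittings (equivalently, of Riemannian metrics on a neighbourhood of $M$): any two choices yield homotopic identifications of $(V \oplus TN)|_M \oplus TM$ with $E \oplus E$, and hence canonically identified spin structures. Everything else is essentially formal, reducing to the reformulation above and to elementary topology of $\mathrm{GL}_+$.
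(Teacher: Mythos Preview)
Your proof is correct and follows essentially the same route as the paper. Both arguments reduce to exhibiting a natural spin structure on the doubled bundle $TN|_M \oplus TN|_M$ via the diagonal lift $\mathrm{GL}_+(n) \to \mathrm{GL}_+^\sim(2n)$; your bundle $E = TM \oplus \nu_M$ is precisely $TN|_M$, so your $E \oplus E$ is the paper's $(TN \oplus TN)|_M$. The paper simply cites \cite[Proposition 3.1.6(b)]{QuiltOr} for the doubling fact and omits the Whitney computation and the naturality discussion that you spell out.
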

\begin{proof}
We need to show that $TM \oplus V|_M \oplus TN|_M$, i.e.~$(TN \oplus TN)|_M$, carries a natural spin structure.  But the double of any orientable rank $k$ vector bundle has a ntural spin structure since its structure group naturally reduces to the block diagonal subgroup $\mathrm{GL}(k, \R)_+ \hookrightarrow \mathrm{GL}(2k, \R)_+$, and this lifts to $\mathrm{GL}(2k, \R)_+^\sim$ \cite[Proposition 3.1.6(b)]{QuiltOr}.
\end{proof}

To apply this one needs to compute $w_2(V)$, and taking $M=L/K \subset N = X \sslash K$ and assuming $V$ pulls back to a spin bundle on $Z=\mu^{-1}(0)$, this can be done as follows.  Fix a spin structure on $p^*V$, where $p : Z \rightarrow X \sslash K$ is the projection, and for a loop $\gamma \in \pi_1(K)$ define $\eps(\gamma) \in \Z/2$ to be $0$ if the action of $\gamma$ on $p^*V$ lifts to the $\mathrm{GL}_+^\sim$-bundle determined by the spin structure, and $0$ otherwise.  Doing this for all classes in $\pi_1(K)$ we obtain an element $\eps$ of $\Hom(\pi_1(K), \Z/2)=H^1(K; \Z/2)$.

\begin{lem}
\label{wtwoV}
The class $\eps \in H^1(K; \Z/2)$ transgresses to $w_2(V) \in H^2(X\sslash K; \Z/2)$ in the Serre spectral sequence for $K \hookrightarrow Z \twoheadrightarrow X\sslash K$.
\end{lem}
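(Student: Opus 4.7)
The plan is to compare our fibration $K \to Z \to X \sslash K$ to the universal path-loop fibration $K(\Z/2, 1) \to PK(\Z/2, 2) \to K(\Z/2, 2)$, in whose Serre spectral sequence contractibility of the total space forces $d_2 : H^1(K(\Z/2, 1); \Z/2) \to H^2(K(\Z/2, 2); \Z/2)$ to be the unique isomorphism $\Z/2 \to \Z/2$ sending generator to generator.  Naturality will then deliver the result.

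Classify $V$ by a map $g : X \sslash K \to B\mathrm{GL}_+$, and note that $w_2 \in H^2(B\mathrm{GL}_+; \Z/2)$ is represented by a map $B\mathrm{GL}_+ \to K(\Z/2, 2)$ with homotopy fibre $B\mathrm{GL}_+^\sim$.  Thus $\phi \coloneqq w_2 \circ g : X\sslash K \to K(\Z/2, 2)$ classifies $w_2(V)$, and a spin structure on $V$ is the same as a null-homotopy of $\phi$.  The chosen spin structure on $p^*V$ therefore provides a null-homotopy of $\phi \circ p$, i.e.~a lift $\tilde\phi : Z \to PK(\Z/2, 2)$, which fits into a map of fibrations
\[
\begin{tikzcd}
K \arrow{r}\arrow{d}{\psi} & Z \arrow{r}\arrow{d}{\tilde\phi} & X \sslash K \arrow{d}{\phi} \\
K(\Z/2, 1) \arrow{r} & PK(\Z/2, 2) \arrow{r} & K(\Z/2, 2)
\end{tikzcd}
\]
Here $\psi$ is the restriction of $\tilde\phi$ to a fibre $F = p^{-1}(x_0) \cong K$; after a homotopy of $\tilde\phi$ if necessary we may assume $\phi(x_0)$ is the basepoint, so that $\psi$ lands in $\Omega K(\Z/2, 2) = K(\Z/2, 1)$.

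The main step is the claim $[\psi] = \eps$ in $H^1(K; \Z/2)$.  Unwinding, $\tilde\phi|_F$ assigns to each $k \in K$ a loop in $K(\Z/2, 2)$ based at the basepoint; via $\Omega K(\Z/2,2) = B\Z/2$ this records the monodromy of the spin cover $Q \to p^*P_V$ around the loop traced in $p^*P_V$ by the $k$-action on a fixed frame above $z_0$, which by definition is precisely the obstruction $\eps(k)$ to the $k$-action lifting to $Q$.  Granting this, naturality of the Serre spectral sequence applied to the displayed diagram gives
\[
d_2(\eps) = d_2([\psi]) = \phi^*\bigl(d_2 \text{ of the generator of } H^1(K(\Z/2,1);\Z/2)\bigr) = \phi^*(\text{generator of } H^2(K(\Z/2,2);\Z/2)) = w_2(V).
\]

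The main obstacle is the identification $[\psi] = \eps$.  Matching the restriction of a homotopy-theoretic null-homotopy of a classifying map with the concrete monodromy/equivariance definition of $\eps$ requires carefully tracing how a spin structure is encoded as a lift $Z \to PK(\Z/2,2)$, and how loops in $K$ become loops in the frame bundle; this is geometric content rather than pure formalism.  Everything else is naturality.
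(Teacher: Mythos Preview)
Your proof is correct and follows essentially the same strategy as the paper: build a map of fibrations out of $K \hookrightarrow Z \twoheadrightarrow X\sslash K$ using the classifying map of $V$ and the lift supplied by the spin structure on $p^*V$, identify the induced map on fibres with $\eps$, and then invoke naturality of the Serre spectral sequence.  The only cosmetic difference is the choice of target fibration: the paper maps to $B(\Z/2) \hookrightarrow B\mathrm{GL}_+^\sim \twoheadrightarrow B\mathrm{GL}_+$ and then deloops through $BK$ to read off the transgression, whereas you go straight to the path--loop fibration $K(\Z/2,1) \hookrightarrow PK(\Z/2,2) \twoheadrightarrow K(\Z/2,2)$, where contractibility of the total space makes the transgression calculation immediate.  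Both routes isolate the same substantive step---the identification $[\psi]=\eps$ (the paper's $\eps'=\eps$)---and you have correctly flagged this as the point requiring genuine geometric input.
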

\begin{proof}
Consider the two fibrations $K \hookrightarrow Z \twoheadrightarrow X\sslash K$ and $B(\Z/2) \hookrightarrow B\mathrm{GL}_+^\sim \twoheadrightarrow B\mathrm{GL}_+$.  The bundle $V$ is classified by a map $\phi : X\sslash K \rightarrow B\mathrm{GL}_+$ between their base spaces, and the choice of spin structure on $p^*V$ gives a lift $\widetilde{\phi}$ to the total spaces.  Restricting to the fibres yields a map $K \rightarrow B(\Z/2)$, i.e.~a class $\eps'$ in $H^1(K; \Z/2)$, and we claim that $\eps' = \eps$.  To see this, it suffices to prove that looped maps
\[
\Omega \eps, \Omega \eps' : \Omega K \rightarrow \Z/2
\]
are homotopic as morphisms of $A_\infty$-spaces.  But both maps can be described as the restriction to $\Omega K$ of the map $\Omega X \rightarrow \mathrm{GL}_+^\sim$ describing the monodromy of our bundle with respect to some connection.

Finally we need to show that $\eps'$ transgresses to $w_2(V)$.  For this we deloop the fibrations to obtain a diagram up to homotopy
\[
\begin{tikzcd}
Z \arrow[hook]{r}\arrow{d}{\widetilde{\phi}} & X\sslash K \arrow[twoheadrightarrow]{r}{\pi}\arrow{d}{\phi} & BK \arrow{d}{B\eps'}
\\ B\mathrm{GL}_+^\sim \arrow[hook]{r} & B\mathrm{GL}_+ \arrow[twoheadrightarrow]{r}{w_2} & B^2\Z/2
\end{tikzcd}
\]
By definition, $w_2(V)$ is the composite $w_2 \circ \phi = B\eps' \circ \pi$, where $\pi$ is the classifying map $X/K \rightarrow BK$ as shown.  The class $\eps'$ transgresses to $B\eps'$ in the Serre spectral sequence for $K \hookrightarrow EK \twoheadrightarrow BK$.  Pulling back by the map $\pi$ gives our fibration $K \hookrightarrow X \twoheadrightarrow X \sslash K$, so by naturality of the Serre spectral sequence we see that $\eps'$ transgresses to $w_2(V)$ in the spectral sequence for the latter.
\end{proof}

\begin{rmk}
Changing the spin structure on $p^*V$ by a class $\delta$ in $H^1(Z; \Z/2)$ changes $\eps$ by the pullback of $\delta$ to $K$.  However, the image of this pullback is precisely the kernel of the map
\[
\pi^* : H^2(BK; \Z/2) \cong H^1(K; \Z/2) \rightarrow H^2(X\sslash K; \Z/2),
\]
so the transgression $w_2(V)$ is unaffected.
\end{rmk}


\subsection{Worked example: flag varieties}
\label{sscLagrangianFlagVariety}

Consider the space $X=\C^{n^2}$ equipped with the standard symplectic form.  Fix a tuple of positive integers $k_1, \dots, k_r$ which sum to $n$ (with $r \geq 2$), and let $K$ be the block diagonal subgroup $\U(k_1)\times \dots \times \U(k_r)$ of $\U(n)$.  We shall define a Hamiltonian $K$-action on $X$ and a Lagrangian $L$ in the zero set of the moment map, apply the above results to show that $L/K$ has well-defined Floer theory in $X\sslash K$, and use the closed--open map to compute $HF^*(L/K, L/K)$ as a ring from knowledge of the quantum cohomology of $X\sslash K$.

The $K$-action will actually be defined as the restriction of a Hamiltonian $\U(n)$-action, so we discuss that first.  To construct this action we view elements of $\C^{n^2}$ as $n\times n$ matrices $w$, so that $\U(n)$ acts by left multiplication.  This action is Hamiltonian with moment map $\mu_{\U(n)} : X \rightarrow \mathfrak{u}(n)^*$ given by
\begin{equation}
\label{UnMomentMap}
\ip{\mu_{\U(n)}(w)}{A} = \frac{i}{2}\Tr(A) -\frac{i}{2} \Tr (w^\dag A w) = \frac{1}{2}\ip{i(ww^\dag-I)}{A}
\end{equation}
for all $w \in \C^{n^2}$ and $A \in \mathfrak{u}(n)$, where $I$ is the $n\times n$ identity matrix and $\ip{A_1}{A_2}=\Tr(A_1^\dag A_2)$ is the usual inner product on $\mathfrak{u}(n)$ (note that $iww^\dag$ and $iI$ both lie in this space); the $\Tr(A)$ term controls the normalisation of the reduced symplectic form.  Therefore $\mu_{\U(n)}^{-1}(0)$ comprises those $w$ such that $ww^\dag=I$, so is exactly $\U(n) \subset \C^{n^2}$.  By \cref{OrbitsIsotropic} it is isotropic, and its dimension is $n^2 = \dim_\C X$, so it is Lagrangian.  This will be our $L$.

\begin{lem}
\label{lemUnMonotone}
$L$ is orientable and monotone.
\end{lem}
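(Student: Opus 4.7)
Orientability is immediate: $\U(n)$ is a Lie group and hence parallelisable. For monotonicity, the plan is to identify $\pi_2(X, L)$ explicitly, exhibit a generating disc, and show that both its area and Maslov index are positive. Since $X = \C^{n^2}$ is contractible, the long exact sequence of the pair gives $\pi_2(X, L) \cong \pi_1(\U(n)) \cong \Z$ via the boundary map, and a standard generator of $\pi_1(\U(n))$ is the loop $\alpha(\theta) = \mathrm{diag}(e^{i\theta}, 1, \dots, 1)$. This bounds the obvious disc $u(z) = \mathrm{diag}(z, 1, \dots, 1)$ in $X$, which therefore represents a generator of $\pi_2(X, L)$.

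For the area, since all entries of $u$ other than the $(1,1)$-entry are constant, $\int_D u^*\omega$ reduces to the standard area of the unit disc in $\C$ and is positive (with the sign convention implicit in \eqref{UnMomentMap}).

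For the Maslov index, the tangent spaces along $\partial u$ are $T_{\alpha(\theta)} L = \alpha(\theta) \cdot \mathfrak{u}(n) \subset \C^{n^2}$, forming a loop of Lagrangians obtained from the totally real subspace $\mathfrak{u}(n)$ by the action of left multiplication by $\alpha(\theta)$ on $\mathrm{Mat}_n(\C)$. Because $\alpha(\theta)$ scales each of the $n$ entries of the first row of a matrix by $e^{i\theta}$ and fixes the remaining $n^2 - n$ entries, this action is a unitary on $\C^{n^2}$ with determinant $e^{in\theta}$, of winding number $n$. The standard formula that the Maslov index of a loop $U(\theta) \cdot \Lambda_0$ equals twice the winding of $\det U$ then yields Maslov index $= 2n$.

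Both area and Maslov index being positive, $L$ is monotone (with $N_L = 2n$). The main obstacle is the Maslov index calculation: one must carefully identify the unitary action of $\alpha(\theta)$ on the full ambient $\C^{n^2}$ --- noting that it scales \emph{all $n$} entries of the first row, not just the $(1,1)$-entry --- in order to read off the $n$-fold winding of its determinant.
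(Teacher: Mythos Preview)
Your proof is correct and follows the same strategy as the paper: exhibit the diagonal disc $u(z)=\mathrm{diag}(z,1,\dots,1)$ as a generator of $\pi_2(X,L)\cong\pi_1(\U(n))\cong\Z$ via the long exact sequence, and verify that its area and Maslov index are both positive.

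Your Maslov computation is actually more careful than the paper's. The paper asserts $\mu(u)=2$, but your value $\mu(u)=2n$ is the correct one: left multiplication by $\alpha(\theta)=\mathrm{diag}(e^{i\theta},1,\dots,1)$ on $\C^{n^2}$ scales all $n$ entries of the first row, giving a unitary of determinant $e^{in\theta}$, and hence the loop of tangent Lagrangians $\alpha(\theta)\cdot\mathfrak{u}(n)$ has Maslov index $2n$. (One can also see this by noting that the Gauss map $\U(n)\to\mathrm{Lag}(n^2)$ factors through $\U(n)\hookrightarrow\U(n^2)$, which is multiplication by $n$ on $\pi_1$.) This slip in the paper is harmless for the lemma, since only positivity matters, and your value is in any case consistent with the later computation $N_{L/K}=2n$ in \cref{FlagMinimalMaslov}.
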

\begin{proof}
$L$ is actually parallelisable since it's a Lie group.  To prove monotonicity consider the holomorphic disc $u(z)$ in $X$ with boundary on $L$ given by the diagonal matrix with diagonal entries $z, 1, \dots, 1$.  The boundary of $u$ generates $\pi_1(L)$, so from the long exact sequence $u$ generates $\pi_2(X,L)$.  It has index $2>0$ and area $\pi > 0$.
\end{proof}

Now restrict this action to $K$.  The resulting $K$-action is Hamiltonian and its moment map $\mu$ is given by projecting $\mu_{\U(n)}$ under
\[
\mathfrak{u}(n)^* \rightarrow \mathfrak{k}^* = \big(\mathfrak{u}(k_1) \oplus \dots \oplus \mathfrak{u}(k_r)\big)^*.
\]
This is the Hamiltonian action we shall study.  The space $Z=\mu^{-1}(0)$ comprises those $w$ such that $i(ww^\dag-I)$ is orthogonal to $\mathfrak{k}$, which amounts to each row having norm $1$, the first $k_1$ rows being pairwise orthogonal, and similarly for the next $k_2$, and so on.  Note that $K$ acts freely on $Z$, and $L$ lies in $Z$ and is $K$-invariant, so we are in the setting of \cref{lemRedMonotone}, which tells us that

\begin{lem}
$L/K$ is monotone.\hfill$\qed$
\end{lem}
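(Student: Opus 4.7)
The plan is to verify that the hypotheses of \cref{lemRedMonotone} are met, and then simply invoke it. Everything needed has essentially already been assembled in the preceding paragraphs, so the proof is just a bookkeeping exercise.

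First I would note that $K = \U(k_1) \times \dots \times \U(k_r)$ is compact and connected, as a product of compact connected groups. Next, the Hamiltonian nature of the $K$-action is automatic from restricting the Hamiltonian $\U(n)$-action together with the projection $\mathfrak{u}(n)^* \to \mathfrak{k}^*$ that defines $\mu$. The freeness of the $K$-action on $Z = \mu^{-1}(0)$ is stated in the discussion just before the lemma: the conditions cutting out $Z$ say that the rows of $w$ are grouped into $r$ blocks of orthonormal vectors in $\C^n$, and left multiplication by $K$ on such a $w$ is free because each $\U(k_j)$ acts freely on the corresponding block.

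Then I would recall from \cref{lemUnMonotone} that $L = \U(n)$ is monotone in $X$, and observe that $L \subset Z$ (since $L = \mu_{\U(n)}^{-1}(0)$ projects into $\mu^{-1}(0)$) and that $L$ is $K$-invariant (since it is even invariant under the larger $\U(n)$-action by left multiplication).

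Having assembled all the hypotheses, the conclusion $\lambda_{L/K} = \lambda_L$ follows from \cref{lemRedMonotone}, so $L/K$ is a monotone Lagrangian in $X \sslash K$. There is no real obstacle here; the whole point of the preceding general results is to make this deduction immediate, and the only thing to double-check is the free action claim, which is a direct consequence of the row-orthonormality description of $Z$.
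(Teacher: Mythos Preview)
Your proposal is correct and follows exactly the paper's approach: the lemma is stated with an inline $\qed$ because the sentence immediately preceding it already observes that $K$ acts freely on $Z$, that $L$ lies in $Z$ and is $K$-invariant, and that we are therefore in the setting of \cref{lemRedMonotone}. You have simply written out this verification in more detail than the paper does.
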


Our goal is the following result

\begin{thm}
\label{HFFlagVariety}
Equipping $L/K$ with a specific choice of relative spin structure (defined below) and with the trivial line bundle to give a brane $(L/K)^\flat$, we have
\[
HF^*((L/K)^\flat, (L/K)^\flat; \Lambda) = \Lambda[c_{1,1}, \dots, c_{1,k_1}, c_{2,1}, \dots, c_{r, k_r}]\bigg/\bigqcup_{j=1}^r (1+c_{j,1}+\dots+c_{j,k_j}) = 1+T,
\]
where $\qcup$ is the Floer product, each $c_{j,k}$ has degree $2k$, and the Novikov variable $T$ has degree $2n$.
\end{thm}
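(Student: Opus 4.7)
The plan is to combine the Biran--Cornea dichotomy with the closed--open map from the quantum cohomology of the ambient product of Grassmannians, and to pin down the Novikov correction by exploiting the Siebert--Tian quantum Whitney relations in each factor.

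First I would identify $L/K$ diffeomorphically with the partial flag variety $F(d_1,\dots,d_r) = \U(n)/K$, and verify that under this identification the tautological subbundle $S_j$ on the $j$-th Grassmannian factor restricts to the successive quotient bundle $E_j = V_j/V_{j-1}$ on $F$. Since $\bigoplus_j S_j$ is a globally defined complex vector bundle on $X\sslash K$ whose restriction to $L/K$ is canonically the normal bundle (via the orthogonal complement structure coming from $\mu_{\U(n)}^{-1}(0)\subset\C^{n^2}$), \cref{lemRedRelSpin} produces a natural relative spin structure on $L/K$, and \cref{wtwoV} lets me read off the associated background class $b$ explicitly.

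Next I would establish wideness. The ring $H^*(F;\K)$ is generated by the classical Chern classes $c_k(E_j)$, hence in degrees at most $2\max_j k_j$; the assumption $r\geq 2$ forces $\max_j k_j \leq n-1$, so the top generator degree $m$ satisfies $m \leq 2n-2 = N_{L/K}-2$. The strict-inequality clause of \cref{BCWideNarrow} then rules out narrowness, yielding $HF^*((L/K)^\flat,(L/K)^\flat;\Lambda) \cong H^*(F;\K)\otimes_\K\Lambda$ as graded $\Lambda$-modules. Set $c_{j,k} := \CO(c_k(S_j))\in HF^{2k}$; by \cref{sscFloerMorse} each $c_{j,k}$ reduces modulo $T$ to the classical Chern class $c_k(E_j)$, and in particular they generate $HF^*$ as a $\Lambda$-algebra. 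Writing $\rho_\ell$ for the degree-$2\ell$ component of $\bigqcup_j(1+c_{j,1}+\dots+c_{j,k_j})$, a degree count combined with wideness shows that for $0<\ell<n$ any hypothetical positive power of $T$ in $\rho_\ell$ would have to multiply a class of strictly negative degree, so $\rho_\ell$ is purely classical and vanishes by the classical Whitney identity $\prod_j c(E_j) = 1$ on $F$. The same reasoning, together with the vanishing of $c_n(\C^n)$ for the trivial bundle, forces $\rho_n = \alpha T$ for some scalar $\alpha\in\K$.

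The remaining and central task is to show $\alpha = 1$. My approach is to pull this out of the Siebert--Tian quantum Whitney relations $c_{k_j}(S_j)\qcup c_{n-k_j}(Q_j) = \epsilon_j T$ in each $QH^*(\Gr(k_j,n);\Lambda)$ with background class $b$. Applying $\CO$ to each such relation gives an identity in $HF^*$, and the key observation is that for degree reasons neither $\CO(c_{k_j}(S_j))$ nor $\CO(c_{n-k_j}(Q_j))$ can receive any $T$-corrections (the required lower-degree classes do not exist), so they equal $c_{j,k_j}$ and $\bigqcup_{i\neq j} c_{i,k_i}$ respectively. The relation thus collapses to $\bigqcup_i c_{i,k_i} = \epsilon_j T$ in $HF^*$; as the left-hand side is independent of $j$, this first of all forces $\epsilon_1 = \dots = \epsilon_r = \alpha$, and then a sign computation using the background class $b$ from the first stage identifies this common sign as $+1$. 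The sign bookkeeping at this step is the main obstacle---with an incorrect relative spin structure the $\epsilon_j$ would disagree and the relations would be inconsistent outside characteristic $2$, matching the observation in the introduction. Finally a rank count shows that $\Lambda[c_{j,k}]$ modulo the asserted relation is a free $\Lambda$-module of rank $n!/(k_1!\cdots k_r!) = \dim_\K H^*(F;\K)$, so the surjection from this quotient onto $HF^*$ is forced to be an isomorphism.
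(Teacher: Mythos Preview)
Your overall strategy is correct and closely parallels the paper's: establish wideness via \cref{BCWideNarrow}, push the Grassmannian quantum Whitney relations through $\CO$, and use the PSS compatibility in degrees below $N_{L/K}=2n$ to translate everything into the Floer algebra. The paper is slightly more direct---it applies $\CO$ to the \emph{full} relation $c(E_j)\qcup c(F_j)=1+T$ in one stroke rather than separating out $\rho_\ell$ for $\ell<n$ and then isolating the top coefficient---but this is only an organisational difference.

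There is, however, a concrete error in your first paragraph. The bundle $\bigoplus_j S_j$ does \emph{not} restrict to the normal bundle of $L/K$ in $X\sslash K$: its restriction is $\bigoplus_j E_j$, which is canonically the trivial bundle $\underline{\C}^n$ (via the Hermitian splitting of the flag), and in any case has real rank $2n$, whereas the normal bundle has real rank $\dim_\R F = 2\sum_{i<j}k_ik_j$. The correct extension of the normal bundle, which is what the paper uses in \cref{FlagRelSpin}, is the bundle $V$ on $X\sslash K$ associated to the coadjoint $K$-representation $(\mathfrak{u}(n)/\mathfrak{k})^*$; it arises by descending the trivial normal bundle of $L=\U(n)$ inside $Z=\mu^{-1}(0)$. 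This matters for exactly the reason you anticipate: feeding the wrong $V$ into \cref{lemRedRelSpin} and \cref{wtwoV} would produce the background class $(n+1)\sum_j c_1(E_j)$ rather than the paper's $\sum_j k_j c_1(E_j)$, and with that class the signs $\epsilon_j=(-1)^{k_j+n+1}$ still depend on the parity of $k_j$, so your consistency check ``$\epsilon_1=\dots=\epsilon_r$'' would fail and the final sign computation would not go through. Once you replace $\bigoplus_j S_j$ by the correct $V$, the rest of your outline works as written.
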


\begin{rmk}
We'll see shortly that $L/K$ is simply connected so in fact the only flat line bundle it admits is the trivial one.
\end{rmk}

The first step is to understand the topology of $L/K$.  Letting $V_j(w) \subset \C^n$ denote the span of the first $d_j \coloneqq k_1+\dots+k_j$ rows of a matrix $w$ in $L$, we obtain a diffeomorphism
\[
L/K \rightarrow \text{flag variety } F(d_1, \dots, d_r)
\]
given by
\[
w \mapsto 0 = V_0(w) \subset V_1(w) \subset \dots \subset V_r(w) = \C^n.
\]
We have tautological bundles $E_1, \dots, E_r$ of ranks $k_1, \dots, k_r$, with fibres $V_r(w)/V_{r-1}(w)$ respectively.

\begin{lem}
\label{lemLKtopology}
$L/K=F(d_1, \dots, d_r)$ is simply connected and $H^*(L/K; \Z)$ is the polynomial algebra by the Chern classes of the tautological bundles $E_1, \dots, E_r$ modulo the relations
\[
c(E_1) \smile \dots \smile c(E_r) = 1
\]
arising from the fact that their sum is trivial.
\end{lem}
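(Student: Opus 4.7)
We have a diffeomorphism $L/K\to F(d_1,\dots,d_r)$ under which the $K$-action on $L$ by right multiplication corresponds to the change-of-basis action, so $F(d_1,\dots,d_r)$ inherits a transitive $\U(n)$-action from the left-multiplication action on $L$, identifying it with the homogeneous space $\U(n)/K$.

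For simple-connectedness, apply the long exact sequence of homotopy groups to the fibration $K\hookrightarrow\U(n)\twoheadrightarrow\U(n)/K$:
\[
\pi_1(K)\to\pi_1(\U(n))\to\pi_1(\U(n)/K)\to\pi_0(K).
\]
The right-hand group vanishes since $K$ is connected, and the determinant identifications $\pi_1(\U(k_j))\cong\Z$, $\pi_1(\U(n))\cong\Z$ turn the leftmost arrow into $(a_1,\dots,a_r)\mapsto a_1+\dots+a_r$, which is surjective because each $a_j$-summand is realised by a loop of unitary matrices supported in a single block. Hence $\pi_1(L/K)=0$.

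For the cohomology ring, this is the classical Borel presentation of a partial flag variety. I would prove it by induction on $r$ using the fibre bundle
\[
\pr_1\colon F(d_1,\dots,d_r)\to \Gr(k_1,n),\qquad (V_\bullet)\mapsto V_1,
\]
whose fibre over a point $V_1\in\Gr(k_1,n)$ is the shorter flag variety $F(k_2,k_2+k_3,\dots,k_2+\dots+k_r)$ inside $\C^n/V_1$. The base case $r=1$ is a point, and $r=2$ is the standard presentation of the Grassmannian $H^*(\Gr(k_1,n);\Z)=\Z[c(E_1),c(Q_1)]/(c(E_1)\smile c(Q_1)=1)$, where $Q_1=\C^n/V_1$. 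For the induction step, observe that each $E_j$ on the total space restricts to the corresponding tautological bundle on the base or fibre: $E_1|_{\text{fibre}}$ is trivial (the constant $V_1$), while for $j\geq 2$ the bundle $E_j|_{\text{fibre}}$ is the $(j{-}1)$st tautological quotient of the fibre flag variety, and $\pr_1^*E_1=E_1$. In particular the Chern classes of $E_2,\dots,E_r$ restrict to a system of generators of the cohomology of the fibre, so the Leray--Hirsch theorem gives $H^*(F(d_1,\dots,d_r);\Z)$ as a free module over $H^*(\Gr(k_1,n);\Z)$ with these generators, and matches the inductively known fibre relations. Finally, the relation $c(E_1)\smile c(Q_1)=1$ on the base, combined with the identification $\pr_1^*Q_1\cong E_2\oplus\dots\oplus E_r$ (which holds over the total space because, fibrewise, $\C^n/V_1$ is filtered with successive quotients $V_j/V_{j-1}$), turns into the single desired relation $c(E_1)\smile\dots\smile c(E_r)=1$.

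\textbf{Main obstacle.} The conceptual steps are routine, but one needs to be careful in the inductive step to verify that the restrictions of the Chern classes of the $E_j$ on the total space really do produce a basis of the fibre cohomology (so that Leray--Hirsch applies), and that the relation coming from $E_1\oplus Q_1=\underline{\C}^n$ on $\Gr(k_1,n)$ combines cleanly with the inductive fibre relation rather than producing extra ones; this is where the identification $\pr_1^*Q_1\cong E_2\oplus\dots\oplus E_r$ does the work.
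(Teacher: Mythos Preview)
Your argument for simple-connectedness is the same as the paper's: both use the homotopy long exact sequence of $K\hookrightarrow\U(n)\twoheadrightarrow\U(n)/K$, and you supply the (easy) extra detail that $\pi_1(K)\to\pi_1(\U(n))$ is surjective.

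For the cohomology ring your approach is correct but genuinely different from the paper's. The paper uses the Borel model $L/K\simeq\U(n)\times_K EK$ and runs the Serre spectral sequence for the fibration $\U(n)\hookrightarrow\U(n)\times_K EK\twoheadrightarrow BK$: the base cohomology is the free polynomial algebra on the Chern classes of the $E_j$, and the exterior generators $x_1,x_3,\dots,x_{2n-1}$ of $H^*(\U(n))$ transgress to the Chern classes of $E_1\oplus\dots\oplus E_r$, yielding the stated presentation in one stroke. Your route is an induction on $r$ via Leray--Hirsch for the bundle $F(d_1,\dots,d_r)\to\Gr(k_1,n)$, reducing to the Grassmannian case and assembling the relation from $c(E_1)c(Q_1)=1$ together with $\pr_1^*Q_1\cong E_2\oplus\dots\oplus E_r$. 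This is the more ``geometric'' derivation of Borel's presentation and is perfectly valid; the paper's spectral-sequence argument is terser and avoids the inductive bookkeeping, but relies on knowing the transgression of the $x_{2j-1}$. The point you flag as the main obstacle---checking that Leray--Hirsch applies and that no spurious relations appear---is exactly where the work lies in your approach; it is routine (e.g.\ by a Poincar\'e-series comparison, noting that the $n$ homogeneous components of $\prod_j c(E_j)-1$ form a regular sequence), and your identification $\pr_1^*Q_1\cong\bigoplus_{j\geq 2}E_j$ is the right tool to lift the fibre relation to the total space.
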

\begin{proof}
The first claim follows from the long exact sequence in homotopy groups for the fibration
\[
K = \U(k_1)\times\dots\times \U(k_r) \hookrightarrow L= \U(n) \twoheadrightarrow L/K=F(d_1, \dots, d_r).
\]
The second, meanwhile, comes from the Serre spectral sequence for the fibration
\[
\U(n) \hookrightarrow \U(n) \times_K EK \twoheadrightarrow BK,
\]
whose total space is homotopy equivalent to $L/K$.  Explicitly, the cohomology of the base is the polynomial algebra on the Chern classes of the $E_j$, whilst the generators $x_1, x_3, \dots, x_{2n-1}$ for the cohomology of the fibre transgress to the Chern classes of $E_1 \oplus \dots \oplus E_r$.
\end{proof}

The symplectic reduction $X \sslash K$ has a similar description as the product
\[
\Gr(k_1, n) \times \dots \times \Gr(k_r, n)
\]
of Grassmannians.  The cohomology of each $\Gr(k_j, n)$ is the polynomial algebra on the Chern classes of its tautological bundle $E_j$ (of rank $k_j$) and of the quotient $F_j=\underline{\C}^n/E_j$ (of rank $n-k_j$), modulo the relation $c(E_j) \smile c(F_j)=1$.  These bundles $E_j$ restrict to the $E_j$ above on $L/K$, so our use of the same notation is justified; if we wish to distinguish them we will denote them by $E_j(X\sslash K)$ and $E_j(L/K)$.  The tangent bundle $T\Gr(k_j, n)$ is naturally identified with $E_j^\vee \otimes F_j$---the fibre of the latter over the point of $\Gr(k_j, n)$ corresponding to a rank $k_j$ subspace $V \subset \C^n$ comprises linear maps $V \rightarrow \C^n/V$, and hence infinitesimal deformations of $V$---so by considering Chern roots we obtain $c_1(\Gr(k_j, n))=k_jc_1(F_j)-(n-k_j)c_1(E_j)$.  We have just seen that $H^2(\Gr(k_j, n); \Z)$ is free of rank $1$, generated by $c_1(E_j)=-c_1(F_j)$, so we deduce $c_1(\Gr(k_j, n))=-nc_1(E_j)$.

\begin{lem}
\label{FlagMinimalMaslov}
The minimal Maslov number $N_{L/K}$ is $2n$.
\end{lem}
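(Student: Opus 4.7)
The plan is to reduce the computation to a Chern number calculation on $X \sslash K$, exploiting the simple connectedness of $L/K$ established in \cref{lemLKtopology}.

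First I would observe that since $L/K$ is simply connected, the long exact sequence of the pair $(X\sslash K, L/K)$ shows that $\pi_2(X\sslash K) \rightarrow \pi_2(X\sslash K, L/K)$ is surjective. Combined with the general fact that $c_1(X)$ vanishes on $\text{image}(\pi_2(L) \rightarrow \pi_2(X))$ for any Lagrangian $L$ (since $TX|_L \cong TL \otimes \C$), this means the Maslov index is well-defined on the quotient and
\[
\mu(\pi_2(X\sslash K, L/K)) = 2c_1(X\sslash K)(\pi_2(X\sslash K)),
\]
so $N_{L/K}$ is twice the minimal Chern number of $X\sslash K$.

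The remaining task is to compute that minimal Chern number. Since $X\sslash K = \Gr(k_1, n) \times \dots \times \Gr(k_r, n)$, we have $c_1(X\sslash K) = \sum_j c_1(\Gr(k_j, n))$, and the preceding discussion already computed $c_1(\Gr(k_j, n)) = -n\, c_1(E_j(X\sslash K))$. Now $\pi_2(X\sslash K) \cong \Z^r$, with generators given by a line $\ell_j$ in each Grassmannian factor, and $c_1(E_j)$ restricts to $\mathcal{O}(-1)$ on such a line, pairing to $-1$. Hence $c_1(X\sslash K)(\ell_j) = n$ for each $j$, and the image of $c_1$ on $\pi_2(X\sslash K)$ is exactly $n\Z$. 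This yields $N_{L/K} = 2n$.

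There is no real obstacle here; the argument is essentially bookkeeping once the topology of $L/K$ and the Chern class formula for the Grassmannian factors are in hand. The only point worth flagging is that one could alternatively verify the bound directly by exhibiting a holomorphic disc on $L/K$ of Maslov $2n$: for example, the image in $X\sslash K$ of the disc $u(z)$ constructed in \cref{lemUnMonotone} (or an analogous one) descends to a disc of the correct index, confirming that the minimum is achieved and not merely bounded above.
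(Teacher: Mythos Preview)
Your argument is correct and is essentially the paper's own: use simple connectedness of $L/K$ to identify $N_{L/K}$ with twice the minimal Chern number of $X\sslash K$, then evaluate $c_1(X\sslash K) = -n\sum_j c_1(E_j)$ on a generating sphere in each Grassmannian factor (the paper writes this sphere down explicitly as $[1:z]\mapsto \lspan{v_1,\dots,v_{k_j-1},v_{k_j}+zv_{k_j+1}}$ and reads off $u^*E_j \cong \mathcal{O}^{k_j-1}\oplus\mathcal{O}(-1)$, whereas you invoke a ``line'').  One small caveat on your closing parenthetical: the disc $u(z)$ from \cref{lemUnMonotone} does not stay inside $\mu^{-1}(0)$ for $|z|\neq 1$, so it cannot be directly projected to $X\sslash K$; this is peripheral, however, and your main argument already establishes both bounds without it.
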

\begin{proof}
Since $L/K$ is simply connected (by \cref{lemLKtopology}) any disc in $\pi_2(X\sslash K, L/K)$ lifts to a sphere in $\pi_2(X\sslash K)$, and the Maslov index becomes twice the Chern number.  We just saw that $c_1(X\sslash K) = -n \sum_j c_1(E_j)$, so it suffices to show that there is a sphere which pairs to $\pm 1$ with $c_1(E_j)$.  To do this, pick $k_j+1$ linearly independent vectors $v_1, \dots, v_{k_j+1}$ in $\C^n$ and consider the sphere
\[
u:[1:z] \in \C\P^1 \mapsto \lspan{v_1, \dots, v_{k_j-1}, v_{k_j}+zv_{k_j+1}} \in \Gr(k_j, n).
\]
The bundle $u^*E_j$ is $\mathcal{O}_{\C\P^1}^{\oplus k_j-1} \oplus \mathcal{O}_{\C\P^1}(-1)$, so $\ip{[u]}{c_1(E_j)}=-1$, as needed.
\end{proof}

\begin{lem}
\label{FlagRelSpin}
$L/K$ carries a natural relative spin structure with background class $b \coloneqq \sum_j k_jc_1(E_j)$.
\end{lem}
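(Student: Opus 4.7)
My plan is to apply \cref{lemRedRelSpin}, which reduces the problem to finding a vector bundle $V$ on $X\sslash K$ restricting to the normal bundle of $L/K$ and then computing $w_2(V) + w_2(X\sslash K)$.

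The first step is to describe the normal bundle of $L/K$ in $X\sslash K$. Under the identification of $L/K$ with the locus of tuples $(W_1, \dots, W_r)$ of mutually orthogonal subspaces of $\C^n$ with $\dim W_j = k_j$ and $\bigoplus_j W_j = \C^n$, the tautological bundle $E_j(X\sslash K)$ restricts to $E_j(L/K)$, and the resulting relation $\bigoplus_j E_j \cong \underline{\C}^n$ on $L/K$ gives $F_j|_{L/K} \cong \bigoplus_{i\neq j} E_i$. Hence
\[
T(X\sslash K)|_{L/K} \cong \bigoplus_j E_j^\vee \otimes F_j|_{L/K} \cong \bigoplus_{i\neq j} E_j^\vee \otimes E_i.
\]
Combining this with the standard identification $T(L/K) \cong \bigoplus_{i<j} E_i^\vee \otimes E_j$ of the tangent bundle of a flag variety, I would read off the normal bundle as $N(L/K) \cong \bigoplus_{i<j} E_j^\vee \otimes E_i$.

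Next I would observe that the bundle $V := \bigoplus_{i<j} E_j^\vee \otimes E_i$ is globally defined on $X\sslash K$, since each tautological bundle extends, and by construction its restriction to $L/K$ is the normal bundle. \Cref{lemRedRelSpin} then delivers a natural relative spin structure with background class $w_2(V) + w_2(X\sslash K)$.

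The final step is a Chern class computation modulo $2$. Using $c_1(E \otimes F) = \rank(F) c_1(E) + \rank(E) c_1(F)$, one finds $c_1(V) = \sum_{i<j}(k_j c_1(E_i) - k_i c_1(E_j))$, so the coefficient of $c_1(E_\ell)$ is $(n - d_\ell) - d_{\ell-1}$. Combined with $c_1(X\sslash K) = -n\sum_j c_1(E_j)$ the total becomes $-(d_\ell + d_{\ell-1}) \equiv k_\ell \pmod 2$, giving the background class $\sum_\ell k_\ell c_1(E_\ell) = b$ as claimed. The main obstacle I anticipate is verifying rigorously that the splitting $T(X\sslash K)|_{L/K} = T(L/K) \oplus N(L/K)$ really corresponds to the decomposition of $\bigoplus_{i\neq j} E_j^\vee \otimes E_i$ into its $i<j$ and $i>j$ pieces; the cleanest route is probably to invoke the hermitian structure on the tautological bundles, which canonically splits each $\Hom(E_i,E_j) \oplus \Hom(E_j,E_i)$ into its two summands.
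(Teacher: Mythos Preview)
Your approach is correct and reaches the same background class, but it follows a genuinely different route from the paper's.  Both proofs invoke \cref{lemRedRelSpin}, so both must produce a bundle $V$ on $X\sslash K$ restricting to the normal bundle of $L/K$ and then compute $w_2(V)+w_2(X\sslash K)$.  The paper builds $V$ as the associated bundle $Z \times_K (\mathfrak{u}(n)/\mathfrak{k})^*$ coming from the symplectic-reduction picture, and then computes $w_2(V)$ via the transgression machinery of \cref{wtwoV}: one reads off how the coadjoint action of loops $\gamma_j \in \pi_1(K)$ wind in $\mathrm{GL}_+$, obtaining $w_2(V)=\sum_j(n-k_j)c_1(E_j)$ mod $2$.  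You instead identify $V$ directly as $\bigoplus_{i<j} E_j^\vee\otimes E_i$ in terms of tautological bundles and compute $w_2(V)=c_1(V)\bmod 2$ by elementary Chern-class algebra.  Your method is more elementary and self-contained for this example (it bypasses the Serre spectral sequence entirely); the paper's method is more tightly coupled to the general reduction framework and illustrates how \cref{wtwoV} is meant to be used in practice.  It is worth noting that the two bundles $V$ are in fact the same: the $K$-representation $(\mathfrak{u}(n)/\mathfrak{k})^*$ is exactly $\bigoplus_{i<j}\Hom(\C^{k_j},\C^{k_i})$ as a real representation, so the associated bundle is your $\bigoplus_{i<j}E_j^\vee\otimes E_i$.

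The obstacle you flag is real but easily handled.  The inclusion $T(L/K)\hookrightarrow T(X\sslash K)|_{L/K}$ does \emph{not} land in the $i<j$ summands: a direct computation of the orthogonality-preserving deformations shows that $T(L/K)$ sits as the anti-diagonal $\{(\phi,-\phi^*)\}$ inside each $\Hom(E_j,E_i)\oplus\Hom(E_i,E_j)$.  However, projection to either summand is then a real isomorphism from the quotient, so the normal bundle is real-isomorphic to $\bigoplus_{i<j}E_j^\vee\otimes E_i$ (and equally to $\bigoplus_{i<j}E_i^\vee\otimes E_j$; these are complex conjugates and have the same $w_2$).  An even quicker fix: since $L/K$ is Lagrangian, any compatible $J$ gives $N(L/K)\cong J\cdot T(L/K)\cong T(L/K)$ as real bundles, and the latter you already know to be $\bigoplus_{i<j}E_i^\vee\otimes E_j$, which extends globally.
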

\begin{proof}
To construct the relative spin structure we apply \cref{lemRedRelSpin}.  The normal bundle to $L$ in $X$ is naturally identified with the trivial bundle with fibre $\mathfrak{u}(n)^*$, whilst the normal bundle to $Z$ is identified with the trivial bundle with fibre $\mathfrak{k}^*$.  Therefore the normal bundle to $L$ in $Z$ is the trivial bundle with fibre $\Phi = (\mathfrak{u}(n)/\mathfrak{k})^*$.  This extends $K$-equivariantly to all of $Z$ (the $K$-action is coadjoint) and thus descends to a vector bundle $V$ on $X\sslash K$.  By \cref{lemRedRelSpin} $L/K$ carries a natural relative spin structure with background class $w_2(X\sslash K)+w_2(V)$.

We compute $w_2(V)$ as in \cref{wtwoV}, equipping $p^*V$ with the spin structure induced by its trivialisation as $Z \times \Phi$.  The class $\eps$ is precisely the pushforward on $\pi_1$ given by the coadjoint action map $K \rightarrow \mathrm{GL}_+$, and $\pi_1(K)$ is freely generated by the loops $\gamma_1, \dots, \gamma_r$ defined as follows: $\gamma_j(t)$ is diagonal with diagonal entries $1, \dots, e^{it}, \dots, 1$, where $e^{it}$ occurs in the $d_j$th position.  The coadjoint action of $\gamma_j$ loops $n-k_j$ times the generator of $\pi_1(\mathrm{GL}_+)$, so we conclude that $\eps$ transgresses to
\[
\sum_{j=1}^r -(n-k_j)c_1(E_j).
\]
Hence $w_2(X\sslash K)+w_2(V) = -n\sum_j c_1(E_j) - \sum_j(n-k_j)c_1(E_j) = \sum_j k_j c_1(E_j)$ in $H^2(X\sslash K; \Z/2)$.
\end{proof}

We are almost ready to prove \cref{HFFlagVariety}.  The final input we need is

\begin{lem}
The quantum cohomology $QH^*(X \sslash K, b; \Lambda)$ is the polynomial algebra over $\Lambda$ on the Chern classes of the $E_j$ and $F_j$, modulo the relations $c(E_j) \qcup c(F_j) = 1+T$, where $T$ is the Novikov variable of degree $2n$.
\end{lem}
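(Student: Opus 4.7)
The plan is to reduce to the quantum Schubert calculus of each Grassmannian factor of $X \sslash K = \Gr(k_1, n) \times \dots \times \Gr(k_r, n)$.

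First, by the Kunneth formula for genus-$0$ three-point Gromov--Witten invariants (which factor over products because a rational curve in a product is just a pair of rational curves in the factors, and Kunneth insertions allow the moduli integral to factor accordingly), and since all factors share the minimal Chern number $n$ computed in the proof of \cref{FlagMinimalMaslov} (so a single Novikov variable $T$ of degree $2n$ suffices for the whole product), the quantum cohomology decomposes as
\[
QH^*(X \sslash K, b; \Lambda) \cong \bigotimes\nolimits_{j=1}^r QH^*(\Gr(k_j, n), b_j; \Lambda),
\]
where $b_j = k_j c_1(E_j)$ is the restriction of $b$ to the $j$-th factor and the tensor product is taken over $\Lambda$.

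Second, for each factor one invokes the Siebert--Tian/Witten presentation: without background class, $QH^*(\Gr(k_j, n); \Lambda)$ is the polynomial algebra on the Chern classes of $E_j$ and $F_j$ modulo a single deformed relation of the form $c(E_j) \qcup c(F_j) = 1 + (-1)^{k_j} T$, the classical relation $c(E_j) \cdot c(F_j) = 1$ being deformed only in top degree by the count of lines in $\Gr(k_j, n)$. The sign $(-1)^{k_j}$ here is the one fixed by the standard orientation on the moduli space of degree-$1$ rational curves.

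Third, one tracks the effect of the background class. A rational curve $u$ in $\Gr(k_j, n)$ of degree $d$ (with respect to the ample class $c_1(F_j)$) satisfies $\langle c_1(E_j), [u] \rangle = -d$, so is reweighted by $(-1)^{\langle b_j, [u]\rangle} = (-1)^{k_j d}$. The $T$-coefficient in the relation, coming from $d=1$ lines, is therefore multiplied by $(-1)^{k_j}$, which exactly cancels the Siebert--Tian sign and yields the clean relation $c(E_j) \qcup c(F_j) = 1 + T$. The principal obstacle is the sign bookkeeping: the background class $b = \sum_j k_j c_1(E_j)$ inherited from \cref{FlagRelSpin} is precisely the one that produces this cancellation, so the proof amounts to observing that the choice of relative spin structure on $L/K$ is compatible with the orientation conventions used in the Siebert--Tian theorem.
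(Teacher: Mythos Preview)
Your proposal is correct and follows essentially the same approach as the paper: cite Witten's presentation of $QH^*(\Gr(k_j,n))$ with the sign $(-1)^{k_j}$ on the quantum correction, then observe that the contributing curves pair to $-1$ with $c_1(E_j)$ so the background class $b=\sum_j k_j c_1(E_j)$ reweights by exactly $(-1)^{k_j}$ and cancels the sign. You are slightly more explicit about the K\"unneth step, which the paper leaves implicit (it simply notes that the relevant curves lie on the $\Gr(k_j,n)$ factor), but the substance is identical.
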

\begin{proof}
We already saw that without the $T$ term this gives the classical cohomology.  Witten \cite[Section 3.2]{WittenGrassmannian} showed that with zero background class the quantum correction is $(-1)^{k_j}T$ (Witten's relation is in terms of the duals of these bundles so comes with a different sign), and we are left to show that turning on the class $b$ modifies this sign to $+1$.  To prove this, recall from \cref{sscRelSpin} that a background class $b$ changes the count of curves in class $A$ by a factor of $(-1)^{\ip{b}{A}}$.  In our case, the curves contributing to $c(E_j)\qcup c(F_j)$ lie on the $\Gr(k_j, n)$ factor and have Chern number $n$ (for degree reasons) and thus pair to $-1$ with $c_1(E_j)$ and to zero with all other $c_1(E_k)$.  They therefore pair to $k_j \mod 2$ with $b$, giving precisely the required factor.
\end{proof}

\begin{rmk}
If $k=1$ then $\Gr(k,n)$ is $\C\P^{n-1}$ and $E$ is $\mathcal{O}_{\C\P^{n-1}}(-1)$ with Chern class $1-H$, where $H$ is the hyperplane class.  The relation $c(E)\smile c(F)=1$ tells us that $c(F)=1+H+\dots+H^{n-1}$, then Witten's relation reduces to $H^{\qcup n}=T$: the familiar description of $QH^*(\C\P^{n-1})$.
\end{rmk}

\begin{proof}[Proof of \cref{HFFlagVariety}]
The Chern classes of the $E_j(L/K)$ lie in degree $<N_L$ (by \cref{FlagMinimalMaslov}) and therefore define Floer cohomology classes $c_{j,k}$ via the PSS map.  Since the classical versions generate $H^*(L/K; \K)$ as a $\K$-algebra with respect to the cup product (\cref{lemLKtopology}), the Floer versions generate $HF^*=HF^*((L/K)^\flat, (L/K)^\flat; \Lambda)$ as a $\Lambda$-algebra with respect to the Floer product.

We also have the Chern classes of the $E_j(X\sslash K)$ in $QH^*=QH^*(X\sslash K, b; \Lambda)$, and using again the fact that they lie in degree $<N_L$ we see that their images under $\CO$ coincide with their classical restrictions to $L/K$ via PSS.  In other words, for each $j$ we have
\[
\CO(c(E_j(X\sslash K))) = c(E_j(L/K)) \in HF^*.
\]
This also forces the $c_{j,k}$ to commute with respect to the Floer product, since $QH^*$ is commutative.

For each $j$, the complement $F_j$ restricts to
\[
E_1(L/K) \oplus \dots \oplus \widehat{E_j(L/K)} \oplus \dots \oplus E_r(L/K)
\]
on $L/K$, so its total Chern class (which lies in degree $<N_L$) satisfies
\[
\CO(c(F_j)) = \bigqcup_{l\neq j} (1+c_{l,1}+\dots+c_{l,k_l}).
\]
The relation $c(E_j) \qcup c(F_j) = 1+T$ in $QH^*$, and the fact that $\CO$ is a ring map, then gives
\begin{equation}
\label{HFFlagRelation}
\bigqcup_{j=1}^r (1+c_{j,1}+\dots+c_{j,k_j}) = 1+T
\end{equation}
in $HF^*$.  Therefore $HF^*$ is a quotient of the algebra claimed in \cref{HFFlagVariety}.  To show that this quotient is by zero, and thus prove the theorem, it suffices to show that $HF^*$ and $A$ have the same dimension over $\K$ in each degree, i.e.~that $(L/K)^\flat$ is wide, and this follows from \cref{BCWideNarrow}.
\end{proof}

We end by noting that if we had tried to run this argument without the background class $b$ then the right-hand side of \eqref{HFFlagRelation} would have been $1+(-1)^{k_j}T$, and varying $j$ we would have obtained relations which are inconsistent outside characteristic $2$ unless the $k_j$ all have the same parity.  The most general background class that makes them consistent is of the form $b + \delta \sum_j c_1(E_j)$ for $\delta \in \Z/2$, which makes the right-hand side into $1+(-1)^\delta T$, and these two choices correspond to the two different relative spin structures on $L/K$: recall that relative spin structures form a torsor for $H^2(X\sslash K, L/K; \Z/2)$, and in our case the long exact sequence of the pair tells us that this group is precisely the kernel of $H^2(X\sslash K; \Z/2) \rightarrow H^2(L/K; \Z/2)$ since $L/K$ is simply connected; this kernel is the span of $\sum_j c_1(E_j)$, so there is a unique relative spin structure which differs from that in \cref{FlagRelSpin}, and the two background classes differ by $\sum_j c_1(E_j)$.


\section{The Floer--Poincar\'e polynomial}
\label{secFloerPoincare}

In this section we explore consequences of grading periodicity.


\subsection{Periodicity and the Poincar\'e polynomial}

So far we have worked with the $\Z$-graded $\Lambda$-algebra $HF^*(L^\flat, L^\flat; \Lambda)$, but by setting the Novikov variable $T$ to $1$ we can turn it into a $\Z/N_L$-graded $\K$-algebra which we denote by $HF^*(L^\flat, L^\flat; \K)$.

\begin{defn}
For an integer $q$, say $L^\flat$ is \emph{$q$-periodic} if $HF^*(L^\flat, L^\flat; \K) \cong HF^{*+q}(L^\flat, L^\flat; \K)$ as $\Z/N_L$-graded vector spaces.
\end{defn}

$L^\flat$ is tautologically $N_L$-periodic, but it often turns out to be $q$-periodic for some proper divisor $q$ of $N_L$, and this can impose strong restrictions (see for example the work of Seidel \cite{SeidelGr} and Biran--Cornea \cite{BCQS, BCRU}).  We shall introduce a simple tool, closely related to the discrete Fourier transform, which allows us to extract new information.

\begin{rmk}
Sources of $q$-periodicity include:
\begin{itemize}
\item The quantum module action of invertible elements of degree $q$ in the $\Z/N_L$-graded quantum cohomology of $QH^*(X; \K)$ (obtained by setting $T=1$ in $QH^*(X; \Lambda)$), as in \cite[Corollary 6.2.1]{BCQS}.  For example, monotone Lagrangians in compact toric varieties are $2$-periodic since the toric divisiors are quantum invertible (McDuff--Tolman \cite[Section 5.1]{McDuffTolman} exhibit them as elements in the image of the Seidel representation \cite{SeidelElt}
\[
\pi_1(\mathrm{Ham}(X)) \rightarrow QH^*(X; \K)^\times,
\]
which in general provides a rich source of invertibles).  Similarly, monotone Lagrangians in quadrics of even complex dimension are $2$-periodic by \cite[Lemma 4.3]{SmQuadrics}.  Note that if our Lagrangian is equipped with a relative spin structure with background class $b$ then we should work with $QH^*(X, b; \K)$.
\item An isomorphism between the shift functor $[q]$ and the identity functor on the Fukaya category of $X$, as in \cite[Section 3]{SeidelGr}.  For example, on $\C\P^n$ the path of symplectomorphisms
\[
(\phi_t : [z_0: z_1: \dots : z_n] \mapsto [e^{2\pi i t}z_0: z_1: \dots : z_n])_{t \in [0, 1]}
\]
gives a Hamiltonian isotopy between the identity and $[-2]$.  In fact, the Seidel representation sends this path (which is actually a closed loop) to the hyperplane class, which provides $2$-periodicity via the quantum module action.
\item  The Floer--Gysin sequence of Biran--Khanevsky \cite[Corollary 1.3]{BirKhan}, in the case $q=2$ (and $\Char \K = 2$, although this restriction can probably be lifted).  Here one assumes that $X$ embeds as a codimension $2$ symplectic submanifold of some symplectic manifold $(M, \omega_M)$, so that $X$ is Poincar\'e dual to a positive multiple of $[\omega_M]$ and $M \setminus X$ is subcritical.  A monotone Lagrangian in $X$ lifts to a Lagrangian circle bundle in $M \setminus X$, and the self-Floer cohomology of the latter is the cone over `multiplication by the Euler class' on the self-Floer cohomology of the former.  Subcriticality ensures that the circle bundle must have vanishing self-Floer cohomology, so multiplication by the Euler class is an isomorphism of degree $2$.\qedhere
\end{itemize}
\end{rmk}

The main idea is to encode the degreewise dimensions of $HF^*$ in a generating function.  Recall that the Poincar\'e polynomial of $L$ over $\K$ is
\[
P(S) = \sum_{j \in \Z} \dim_\K H^j(L; \K)  \cdot S^j \in \Z[S].
\]
This is multiplicative under tensor product decompositions of $H^*(L; \K)$ as a graded vector space.

\begin{defn}
The \emph{Floer--Poincar\'e polynomial} of $L^\flat$ is the generating function
\[
P_F(S) = \sum_{j \in \Z/N_L} \dim_\K HF^j(L^\flat, L^\flat; \K) \cdot S^j \in \Z[S]/(S^{N_L} - 1)
\]
of the `Floer--Betti numbers'.  Note that this is only defined modulo $S^{N_L}-1$.
\end{defn}

Its key property for us is

\begin{prop}
\label{lemPeriodic}
If $L^\flat$ is $q$-periodic for some proper divisor $q$ of $N_L$, then $P_F(S)$ is divisible by $S^q+S^{2q}+\dots+S^{N_L}$.  In particular, the total dimension $P_F(1)$ of $HF^*(L^\flat, L^\flat; \K)$ is divisible by $N_L/q$, whilst $P_F(\zeta)$ must vanish for any $N_L$th root of unity $\zeta$ that isn't also a $q$th root of unity.
\end{prop}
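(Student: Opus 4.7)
The plan is to use $q$-periodicity to deduce that the Floer--Betti numbers $a_j \coloneqq \dim_\K HF^j(L^\flat, L^\flat; \K)$ are periodic in $j \in \Z/N_L$ with period $q$, then factor $P_F(S)$ accordingly and read off the consequences.

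First I would unpack the definition: an isomorphism $HF^* \cong HF^{*+q}$ of $\Z/N_L$-graded vector spaces gives $a_j = a_{j+q}$ for every $j$. Writing $N_L = qm$ with $m>1$ (since $q$ is a proper divisor of $N_L$), I would group the summands of $P_F(S) = \sum_{j=0}^{N_L - 1} a_j S^j$ according to residue mod $q$. Periodicity allows each $a_{i+kq}$ to be replaced by $a_i$, yielding the factorisation
\[
P_F(S) = \left(\sum_{i=0}^{q-1} a_i S^i\right)\left(1 + S^q + S^{2q} + \dots + S^{(m-1)q}\right)
\]
as an identity in $\Z[S]$. Since $S^{N_L} \equiv 1 \pmod{S^{N_L}-1}$, the second factor represents the same class in $\Z[S]/(S^{N_L}-1)$ as $S^q + S^{2q} + \dots + S^{N_L}$, so the divisibility claim follows at once in the quotient ring.

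The two stated consequences then drop out by evaluation. Setting $S=1$ gives $P_F(1) = m \cdot (a_0 + \dots + a_{q-1})$, so $m = N_L/q$ divides the total dimension. For an $N_L$th root of unity $\zeta$, the second factor evaluates to $(\zeta^{N_L} - 1)/(\zeta^q - 1) = 0$ whenever $\zeta^q \neq 1$, so $P_F(\zeta) = 0$ for any such $\zeta$.

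I do not anticipate any real obstacle here: once $q$-periodicity has been translated into the relation $a_j = a_{j+q}$, the rest is a one-line manipulation of a finite geometric series, essentially the discrete Fourier transform mentioned in the section introduction. The only mildly delicate point is keeping the two equivalent expressions $1 + S^q + \dots + S^{(m-1)q}$ and $S^q + S^{2q} + \dots + S^{N_L}$ straight, which is why I would carry out the factorisation in $\Z[S]$ first and reduce modulo $S^{N_L}-1$ only at the end.
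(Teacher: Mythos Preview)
Your proposal is correct and follows essentially the same argument as the paper: both group the Floer--Betti numbers by residue modulo $q$, factor out the geometric sum $1+S^q+\dots+S^{(m-1)q}$ (equivalently $S^q+\dots+S^{N_L}$ in the quotient ring), and then evaluate at $S=1$ and at $N_L$th roots of unity that are not $q$th roots of unity. The only cosmetic difference is your choice of representatives $i=0,\dots,q-1$ versus the paper's $j=1,\dots,q$.
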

\begin{proof}
Assuming $q$-periodicity we have
\begin{align*}
P_F(S) &= \sum_{j =1}^q \sum_{k=1}^{N_L/q} \dim_\K HF^{j+kq}(L^\flat, L^\flat; \K) \cdot S^{j+kq}
\\ &= \sum_{j =1}^q \dim_\K HF^j(L^\flat, L^\flat; \K) \cdot S^j \sum_{k=1}^{N_L/q} S^{kq},
\end{align*}
and the inner sum is the claimed factor.  When $S=1$ this factor becomes $N_L/q$.  Meanwhile, when $S^q \neq 1$ it becomes $S^q(S^{N_L}-1)/(S^q-1)$, so vanishes if $S$ is also an $N_L$th root of unity.
\end{proof}

\begin{rmk}
For $\zeta = e^{-2\pi i /N_L}$ the sequence $P_F(1), P_F(\zeta), \dots, P_F(\zeta^{N_L-1})$ is the discrete Fourier transform of the sequence of Floer--Betti numbers.
\end{rmk}


\subsection{Minimal Maslov constraints}

A simple consequence for monotone tori is

\begin{prop}\label{corTorMinMas}  A monotone Lagrangian torus $L \subset X$ which is $2$-periodic has minimal Maslov number $2$.  In particular this applies to any monotone Lagrangian torus if the $\Z/2N_X$-graded quantum cohomology contains an invertible element of degree $2$.
\end{prop}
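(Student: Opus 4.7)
The plan is to argue by contradiction, supposing $N_L > 2$. Since $L=T^n$ is orientable, $N_L$ is automatically even, so in fact $N_L \geq 4$; in particular, $2$ is a proper divisor of $N_L$, which is the hypothesis required to apply \cref{lemPeriodic} non-trivially with $q=2$.

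First I would invoke \cref{BCWideNarrow}: the cohomology ring $H^*(T^n;\K)$ is generated in degree $1$, and the strict inequality $1 < N_L - 1$ holds, so $L^\flat$ must be wide. Wideness gives the identification $HF^j(L^\flat,L^\flat;\K) \cong \bigoplus_{k \equiv j \pmod{N_L}} H^k(L;\K)$ after setting $T=1$, which can be rewritten as
\[
P_F(S) \equiv (1+S)^n \pmod{S^{N_L}-1}.
\]
Next, \cref{lemPeriodic} applied with $q=2$ tells us that $P_F(\zeta)=0$ for every $N_L$-th root of unity $\zeta \neq \pm 1$. Choosing $\zeta$ to be a primitive $N_L$-th root of unity in $\C$ (which satisfies $\zeta \neq \pm 1$ because $N_L\geq 4$) and comparing the two expressions for $P_F(\zeta)$ yields $(1+\zeta)^n = 0$ in $\C$, forcing $\zeta = -1$---the desired contradiction. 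Hence $N_L=2$.

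For the ``in particular'' clause, I would observe that if $\alpha \in QH^2(X;\K)$ is invertible then $\CO(\alpha) \in HF^2(L^\flat,L^\flat;\K)$ inherits invertibility from the fact that $\CO$ is a unital ring homomorphism, so multiplication by $\CO(\alpha)$ furnishes the $\Z/N_L$-graded isomorphism $HF^* \cong HF^{*+2}$ required for $2$-periodicity; the main claim then applies. The only non-routine idea in the argument is the decision to evaluate the Floer--Poincar\'e polynomial at a non-real root of unity: the weaker evaluation at $S=1$ only controls the total dimension and is far too coarse to pin $N_L$ down, whereas at a primitive root of unity the factorisation $(1+S)^n$ becomes a genuine obstruction because $\C$ is an integral domain.
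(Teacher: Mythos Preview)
Your argument is correct and follows essentially the same route as the paper: assume $N_L>2$, invoke \cref{BCWideNarrow} to obtain wideness and hence $P_F(S)\equiv (1+S)^n$, then apply \cref{lemPeriodic} at a primitive $N_L$th root of unity to reach a contradiction. Your treatment of the ``in particular'' clause via invertibility of $\CO(\alpha)$ is also the intended mechanism (the paper relegates this to the surrounding discussion rather than the proof itself).
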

\begin{proof}  Suppose for contradiction that $L$ is a $2$-periodic monotone Lagrangian $n$-torus of minimal Maslov number $N_L > 2$, and equip it with an arbitrary spin structure and flat line bundle over $\K$.  By \cref{BCWideNarrow} $L^\flat$ is wide, so its Floer--Poincar\'e polynomial is the reduction mod $(S^{N_L}-1)$ of its ordinary Poincar\'e polynomial, $(1+S)^n$.  Applying $P_F(\zeta) = 0$, where $\zeta = e^{2\pi i/N_L}$, we see that $1 + \zeta = 0$.  Since $N_L> 2$ this is impossible, so we conclude that such an $L$ cannot exist.
\end{proof}

\begin{rmk}
Using his theory of Floer (co)homology on the universal cover, Damian \cite[Theorem 1.6]{DamianUnivCov} proved that $N_L = 2$ for all monotone orientable aspherical Lagrangians $L$ in the product of $\C\P^n$ (for $n \geq 1$) with an arbitrary symplectic manifold $W$.  Fukaya \cite[Theorem 14.1]{FukayaAppHF} obtained a similar result, without monotonicity, for aspherical spin Lagrangians in $\C\P^n$ and other uniruled symplectic manifolds.
\end{rmk}

In fact, the same argument generalises to give

\begin{thm}
\label{thmExteriorAlgebraNL}
Suppose $L \subset X$ is a (closed, connected) monotone Lagrangian, whose cohomology $H^*(L; \K)$ is an exterior algebra on generators of degree $2k_1-1 \leq \dots \leq 2k_r-r$, and that either $\Char \K=2$ or that $L$ is orientable and relatively spin (with background class $b$).  If $QH^*(X, b; \K)$ contains an invertible element of even degree $q\leq 2k_r$ then $N_L$ is at most $2k_r$.
\end{thm}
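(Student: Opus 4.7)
The plan is to extend the argument of \cref{corTorMinMas}, in which one supposes $N_L > 2k_r$ for contradiction and derives an impossible root-of-unity identity.  My first step will be to verify wideness: each exterior algebra generator has degree $2k_i - 1 \leq 2k_r - 1 < N_L - 1$, so \cref{BCWideNarrow} rules out narrowness strictly, and the Floer--Poincar\'e polynomial becomes $P_F(S) \equiv P(S) \bmod (S^{N_L} - 1)$, where $P(S) = \prod_{i=1}^r (1 + S^{2k_i - 1}) \in \Z[S]$.

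Next I will invoke the periodicity.  Since $\CO(u) \in HF^q$ is a unit, Floer multiplication by it makes $L^\flat$ a $q$-periodic brane; setting $d \coloneqq \gcd(q, N_L)$, which satisfies $d \leq q < N_L$, \cref{lemPeriodic} then forces $P_F(\zeta) = 0$ for every $N_L$th root of unity $\zeta \in \C$ with $\zeta^d \neq 1$.  Since $P_F$ has integer coefficients (the $\K$-dimensions $\dim_\K HF^j$), this is a genuine identity in $\C$ and therefore valid regardless of $\Char \K$.

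The decisive step will be exhibiting such a $\zeta$ with $P(\zeta) \neq 0$.  The factor $1 + \zeta^{2k_j - 1}$ vanishes exactly when $\zeta^{2k_j - 1}$ has order $2$, so I want to choose $\zeta$ preventing this, and I propose to split on $v_2(N_L)$.  When $v_2(N_L) \neq 1$, a primitive $N_L$th root $\zeta$ will work: then $\zeta^d \neq 1$ (as $d < N_L$), and the order of $\zeta^{2k_j - 1}$ equals $N_L/\gcd(N_L, 2k_j - 1)$, whose $2$-adic valuation is $v_2(N_L)$ (the gcd being odd since $2k_j - 1$ is) and hence is not $1$.  When $v_2(N_L) = 1$, write $N_L = 2m$ with $m$ odd and take $\zeta$ a primitive $m$th root of unity (automatically an $N_L$th root): since $q$ is even and $q \leq 2k_r$, the odd part of $q$ is at most $q/2 \leq k_r$, while $N_L > 2k_r$ forces $m > k_r$, so $m$ divides neither $q$ nor $d$, giving $\zeta^d \neq 1$.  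Moreover $\zeta^{2k_j - 1}$ has odd order dividing $m$, hence is not $-1$.  In both cases $P(\zeta) \neq 0$, yielding the desired contradiction.

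I expect the $v_2(N_L) = 1$ case to be the main obstacle: there a primitive $N_L$th root can genuinely satisfy $\zeta^{2k_j - 1} = -1$ (whenever $m \mid 2k_j - 1$), so naively using $\zeta = e^{2\pi i / N_L}$ as in \cref{corTorMinMas} fails, and one must descend to a primitive $m$th root.  This manoeuvre succeeds only because the combination of $q$ being even, $q \leq 2k_r$, and $N_L > 2k_r$ forces $m > k_r \geq q/2$, and hence $m \nmid q$.
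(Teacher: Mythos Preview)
Your argument is correct, and it reaches the contradiction by a genuinely different route from the paper's.  Both proofs begin identically: assuming $N_L > 2k_r$, wideness from \cref{BCWideNarrow} gives $P_F(S) \equiv \prod_j (1+S^{2k_j-1})$, and the invertible element of degree $q$ yields $d$-periodicity for $d = \gcd(q, N_L)$.  From there the paper always evaluates at the primitive $N_L$th root $\zeta = e^{2\pi i/N_L}$, accepts that some factor $1+\zeta^{2k_j-1}$ may vanish (forcing $N_L = 2(2k_j-1)$), and then brings in the \emph{other} conclusion of \cref{lemPeriodic}---namely $N_L/d \mid P_F(1) = 2^r$---to pin down $d \in \{2k_j-1,\, 2(2k_j-1)\}$, which combined with the parity of $q$ forces $d = N_L$, a contradiction.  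You instead avoid the total-dimension divisibility entirely by choosing your root of unity more carefully: a primitive $N_L$th root when $v_2(N_L) \neq 1$, and a primitive $m$th root (with $N_L = 2m$, $m$ odd) when $v_2(N_L) = 1$, in each case arranging that no factor $1+\zeta^{2k_j-1}$ can vanish while still having $\zeta^d \neq 1$.  The paper's argument is more uniform (one root throughout) but uses both halves of \cref{lemPeriodic}; yours is slightly more elementary in that it relies only on the vanishing $P_F(\zeta) = 0$, at the cost of the case split and the arithmetic observation that $m > k_r \geq q/2$ forces $m \nmid d$.
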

\begin{proof}
Let $L^\flat$ be the brane obtained by equipping $L$ with an arbitrary relative spin structure (if $\Char \K \neq 2$) and the trivial line bundle.  If $N_L > 2k_r$ then in particular $N_L \geq 2$ so $HF^*(L^\flat, L^\flat; \K)$ is defined, and from \cref{BCWideNarrow} we get
\[
P_F(S)=\prod_{j=1}^r (1+S^{2k_j-1}) \mod (S^{N_L}-1).
\]
We claim that this is impossible, assuming the existence of the invertible element $h$ of degree $q$.

Well, after reducing the grading of $QH^*$ modulo $N_L$, some power of $h$ constitutes an invertible element of degree $q'$, where $q' \coloneqq \gcd(q, N_L)$ is the smallest element of the subgroup of $\Z/N_L$ generated by $q$.  \Cref{lemPeriodic} then gives $P_F(\zeta)=0$ for $\zeta=e^{2\pi i/N_L}$, and $N_L/q' | 2^r$.  The former means that $1+\zeta^{2k_j-1}=0$ for some $j$, and hence that $N_L=2(2k_j-1)$, and the latter then yields $q'=2k_j-1$ or $2(2k_j-1)$.  Since $N_L$ is even, our assumption that $q$ is even means that $q'$ is also even.  We must therefore have $q'=2(2k_j-1)=N_L$, but this contradicts the fact that $q'\leq q\leq 2k_r < N_L$.  We conclude that if the element $h$ exists then $N_L$ must be at most $2k_r$.
\end{proof}

\begin{rmk}
The hypotheses are satisfied for monotone Lagrangian embeddings of compact connected Lie groups $K$ and their quotients by finite subgroups $\Gamma$, taking $\K=\Q$.  Hopf \cite[Satz 1]{HopfUberDieTopologie} showed that the rational cohomology algebras of such $K$ are exterior algebras on odd degree generators, and the same holds for finite quotients by showing that the quotient map induces an isomorphism on rational homology (an inverse is provided by sending a simplex in $K/\Gamma$ to the average of its $|\Gamma|$ lifts to $K$: this is clearly a right inverse, and to see that it's a left inverse note that if we project a cycle $\sigma$ in $K$ and then lift we obtain
\[
\frac{1}{|\Gamma|} \sum_{\gamma \in \Gamma} \gamma \cdot \sigma
\]
and this is homotopic to $\sigma$ by homotoping each $\gamma$ back to the identity).  All such $K/\Gamma$ are orientable and spin since they are parallelised by the infinitesimal action of $\mathfrak{k}$.
\end{rmk}


\subsection{Worked example: $\PSU(n)$}
\label{sscLCEL}

We now apply these ideas to the following family of examples.  For an integer $n \geq 2$ the group $\SU(n)$ acts by left multiplication on the space of $n\times n$ complex matrices, and projectivising gives a Hamiltonian $\PSU(n)$-action on $X = \C\P^{n^2-1}$ (with the Fubini--Study form).  The action on the identity matrix is free, and its orbit $L$ gives a Lagrangian embedding of $\PSU(n)$.  This is the projectivisation of the Lagrangian $\U(n)$ from \cref{sscLagrangianFlagVariety}.  Its fundamental group is $\Z/n$, so since $X$ is monotone with minimal Chern number $n^2$ we see that $L$ is monotone and has minimal Maslov number divisible by $2n$.  It is parallelisable (it's a Lie group) and therefore orientable and spin.

This family was originally discovered by Amarzaya and Ohnita \cite{AmOh} and later rediscovered by Chiang \cite[Section 4]{Ch}, and we denote it by $\LCEL$.  Iriyeh \cite[Theorem 8, Proposition 27]{Iriyeh} proved that $\LCEL$ is wide over a field of characteristic $2$ if $n$ is a power of $2$ and narrow otherwise, and when $n$ is $3$ or $5$ $\LCEL$ is non-narrow over $\Z$.  Later, Evans and Lekili \cite[Example 7.2.3]{EL2} showed that $\LCEL$ is wide over a field of characteristic $p$ when $n$ is a power of $p$, and narrow if $n$ is not divisible by $p$.  Their arguments work with any relative spin structure and flat line bundle.  Recently this family has also been studied by Torricelli \cite{BrunMast}.

We complete these partial computations by showing

\begin{thm}\label{thmLCEL}
If $n$ is not a prime power then $\LCEL$ is narrow for any choice of coefficient field, relative spin structure, and flat line bundle $\mathcal{L}$.
\end{thm}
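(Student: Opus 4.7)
The strategy is to apply the Floer--Poincar\'e polynomial machinery of \cref{secFloerPoincare}, as in the proof of \cref{thmExteriorAlgebraNL}, extending that argument to cover the case where the relevant mod-$p$ cohomology is no longer a rational exterior algebra. Since $H^*(\PSU(n); \K)$ is always generated in degrees at most $2n - 1 = N_L - 1$, \cref{BCWideNarrow} gives the wide/narrow dichotomy, so it suffices to rule out wideness. Evans--Lekili already handle the case $\Char \K \nmid n$, so one may reduce to $\Char \K = p$ with $p \mid n$; write $n = p^a m$ with $m > 1$ coprime to $p$, and fix a prime $q$ dividing $m$, which is automatically distinct from $p$.

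The hyperplane class in $QH^2(\C\P^{n^2 - 1}; \K)$ is invertible, so by \cref{lemPeriodic} $L^\flat$ is $2$-periodic and, if wide, the Poincar\'e polynomial $P_L(S)$ of $H^*(\PSU(n); \F_p)$ reduced modulo $S^{2n} - 1$ must vanish at every primitive $d$th root of unity $\zeta \in \overline{\F_p}$ with $d \mid 2n$ and $d > 2$. I would take $\zeta$ to be a primitive $q$th root of unity (if $q$ is odd) or a primitive $2q$th root (if $q = 2$): in either case $\zeta$ is a $2n$th root of unity with $\zeta^2 \neq 1$, and $\zeta$ exists in $\overline{\F_p}$ because $p \neq q$.

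To reach a contradiction I would invoke the standard computation of $H^*(\PSU(n); \F_p)$, for instance via the Serre spectral sequence of the $\Z/n$-cover $\SU(n) \to \PSU(n) \to B\Z/n$ together with Kudo transgression (after Baum--Browder and Kono--Mimura): the cohomology is a tensor product of an exterior algebra on most of the Hopf generators $x_3, \ldots, x_{2n-1}$ and a contribution from $H^*(B\Z/p^a; \F_p)$, comprising a truncated polynomial factor of degree~$2$ (when $p$ is odd, or $p = 2$ with $a \geq 2$) and an exterior factor of degree~$1$. Plugging $\zeta$ into the resulting product Poincar\'e polynomial yields a finite product of cyclotomic-like terms, whose nonvanishing in $\overline{\F_p}$ can be verified by essentially the same manipulation $\prod_{r \in \Z/q}(1 + \zeta^r) = 2$ used in the proof of \cref{thmExteriorAlgebraNL}, now adjusted for the additional truncated polynomial factor.

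The main obstacle is this final non-vanishing check, which naturally splits into several sub-cases ($p = 2$ versus $p$ odd; $q = 2$ versus $q$ odd; $a = 1$ versus $a \geq 2$ when $p = 2$). In each, one must isolate the unit factor $1 + \zeta$ coming from the surviving exterior generators $x_{2j-1}$ and check that the accompanying cyclotomic contribution from the truncated polynomial factor does not also vanish at $\zeta$; this holds because $\zeta$ has order a prime distinct from $p$, so both $(1 - \zeta^{2p^a}) / (1 - \zeta^2)$ (or its $p = 2$ analogue) and the powers of $2$ appearing in the manipulation above remain invertible in $\overline{\F_p}$.
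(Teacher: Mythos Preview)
Your overall strategy is sound and, once corrected, gives a valid alternative to the paper's argument, but as written there is a genuine gap.  The problem is your choice to evaluate the Floer--Poincar\'e polynomial at roots of unity in $\overline{\F_p}$ rather than in $\C$.  The polynomial $P_F(S)$ has integer coefficients and the divisibility conclusion of \cref{lemPeriodic} holds already in $\Z[S]/(S^{N_L}-1)$, so one may evaluate at complex roots of unity; there is no advantage to passing to $\overline{\F_p}$, and doing so breaks the argument when $p=2$.  Your key identity $\prod_{r\in\Z/q}(1+\zeta^r)=2$ becomes $0$ in $\overline{\F_2}$, and indeed individual exterior factors $1+\zeta^{2j-1}$ vanish there whenever $q\mid 2j-1$ (this happens for $n/q\geq 2$ values of $j$, at most one of which is $p^a$), so $P_F(\zeta)=0$ automatically and you get no contradiction.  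Over $\C$ the obstacle disappears: for $\zeta$ a complex primitive $q$th root of unity with $q$ odd, no power of $\zeta$ equals $-1$, so every factor $1+\zeta^{2j-1}$ is nonzero, and the truncated polynomial factor is nonzero since $q\nmid 2p^a$; the case $q=2$ with $\zeta=\pm i$ is equally immediate.  With this fix the product identity is not even needed, and the case split on $a$ and on $p$ odd versus $p=2$ collapses.

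The paper's argument takes a different route.  Rather than selecting a low-order root $\zeta$ tailored to a prime $q\mid n/p^r$, it evaluates at a primitive $2n$th root of unity in $\C$ and uses \emph{both} consequences of \cref{lemPeriodic}.  Vanishing of $P_F$ at $\zeta$ forces some $1+\zeta^{2j-1}=0$, hence $2j-1=n$ and $n$ is odd; then the total-dimension divisibility $n\mid p^r\cdot 2^{n-1}$ forces $n\mid p^r$, which is impossible.  Your corrected approach is a little more direct in that it uses only the root-of-unity vanishing and not the total-dimension constraint, at the cost of having to single out an auxiliary prime $q$ in advance; the paper's argument is uniform in $n$ and avoids invoking the Evans--Lekili result separately.
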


We shall actually deduce this from the following more general result

\begin{thm}\label{corLagPSU}  If $L$ is a $2$-periodic Lagrangian embedding of $\PSU(n)$ in a closed monotone symplectic manifold $X$, with $n \geq 2$ and $N_L \geq 2n$, then $N_L=2n$ and either:
\begen
\item\label{PSUitm2} $\K$ has prime characteristic $p$ and $n$ is a power of $p$, in which case $L^\flat$ is wide for all line bundles $\mathcal{L}$ and all relative spin structures.
\item\label{PSUitm1} Otherwise $L^\flat$ is narrow over $\K$ for all such $\mathcal{L}$ and $s$.
\end{enumerate}
\end{thm}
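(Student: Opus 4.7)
The plan is to first pin down $N_L$, then reduce to a wide-or-narrow dichotomy via \cref{BCWideNarrow}, and finally decide which alternative holds using the Floer--Poincaré polynomial of \cref{lemPeriodic}. For the first step, since $\PSU(n)$ is a finite quotient of the compact connected Lie group $\SU(n)$, the remark after \cref{thmExteriorAlgebraNL} shows that its rational cohomology is an exterior algebra on odd-degree generators of degrees $3, 5, \ldots, 2n-1$, with top generator in degree $2n - 1$, i.e.\ $k_r = n$. The $2$-periodicity hypothesis supplies an invertible degree-$2$ element in $QH^*(X; \Q)$, and applying \cref{thmExteriorAlgebraNL} over $\K = \Q$ yields $N_L \leq 2n$; combined with the standing assumption $N_L \geq 2n$ this forces $N_L = 2n$.

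Next I would verify that $H^*(\PSU(n); \K)$ is generated as a $\K$-algebra in degrees $\leq 2n - 1 = N_L - 1$ in every characteristic: when $\Char \K$ does not divide $n$ the algebra remains exterior on the generators above, while when $\Char \K = p$ divides $n$ the only new generator (appearing via Baum--Browder / Kono--Mimura) sits in degree $2$. \Cref{BCWideNarrow} then forces $L^\flat$ to be wide or narrow. Wideness when $n = p^{r}$ and $p = \Char \K$ (case (i)) is covered by Evans--Lekili \cite[Example 7.2.3]{EL2}, so the remaining task is to rule out wideness whenever $n$ is not a power of $\Char \K$.

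Assume $L^\flat$ is wide. By $2$-periodicity and \cref{lemPeriodic}, the reduction modulo $S^{2n} - 1$ of the ordinary Poincaré polynomial $P(S)$ of $L$ over $\K$ must vanish at every $2n$-th root of unity $\zeta \neq \pm 1$; I would exhibit a $\zeta$ at which $P(\zeta) \neq 0$. If $\Char \K = 0$ or $\Char \K \nmid n$ then $P(S) = \prod_{k=2}^{n}(1+S^{2k-1})$; a primitive $4$-th root of unity works when $n$ is even (all $\zeta^{2k-1}$ equal $\pm i$, never $-1$), and a primitive $n$-th root of unity works when $n$ is odd (since $\zeta$ then has odd order $\geq 3$, so no $\zeta^{2k-1}$ can equal $-1$). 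If $\Char \K = p$ divides $n$ but $n = p^{r} m$ with $m > 1$ and $\gcd(m, p) = 1$, the Kono--Mimura description of $H^*(\PSU(n); \F_p)$ as the tensor product of $\F_p[y]/(y^{p^{r}})$ (with $y$ of degree $2$) with an exterior algebra on certain odd-degree generators produces an explicit $P(S)$, and the coprime factor $m$ should furnish a $2n$-th root of unity at which $P$ does not vanish.

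The technical heart of the proof is this last sub-case: it is precisely where the dichotomy between $n = p^{r}$ (wide) and $n \neq p^{r}$ (narrow) becomes visible, and it relies both on the explicit Kono--Mimura structure of $H^*(\PSU(n); \F_p)$ and on a careful, $m$-dependent choice of root of unity.
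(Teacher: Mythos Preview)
Your overall strategy coincides with the paper's: pin down $N_L=2n$ via \cref{thmExteriorAlgebraNL} over $\Q$, invoke \cref{BCWideNarrow} for the wide/narrow dichotomy (the paper also notes, as you do, that wideness when $n$ is a $p$-power is the Evans--Lekili argument), and use \cref{lemPeriodic} to rule out wideness otherwise.

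One small misstep: $2$-periodicity is a property of $HF^*$, not of $QH^*$, and does not by itself manufacture an invertible degree-$2$ element of $QH^*(X;\Q)$.  The paper makes the same leap (it too cites \cref{thmExteriorAlgebraNL} directly), and it is harmless because the \emph{proof} of \cref{thmExteriorAlgebraNL} uses only the periodicity consequence of the invertible element; but you should say this rather than conjure an element that is not there.

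The substantive difference is in the endgame you label the ``technical heart'' and leave open.  You propose to hunt, case by case, for a specific $2n$th root of unity at which the Poincar\'e polynomial does not vanish.  This can be completed: your choices for $p\nmid n$ are correct, and when $n=p^r m$ with $m>1$ and $\gcd(m,p)=1$ one can take a primitive $m$th root of unity if $m$ is odd (then no power equals $-1$, and $\zeta^{2p^r}\neq 1$ since $m\nmid p^r$), or $\zeta=i$ if $m$ is even (then $p$ is odd, so $i^{2p^r}=-1\neq 1$, and $i^{2j-1}=\pm i\neq -1$).  The paper's argument is more uniform and avoids this case split entirely: it always takes the primitive $2n$th root $\zeta=e^{\pi i/n}$, notes that the truncated-polynomial factor $1+\zeta^2+\dots+\zeta^{2p^r-2}$ cannot vanish (its vanishing would force $n\mid p^r$), so some exterior factor $1+\zeta^{2j-1}$ vanishes, forcing $n=2j-1$ to be odd.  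It then uses the \emph{other} conclusion of \cref{lemPeriodic}---that $N_L/2=n$ divides the total dimension $P_F(1)=p^r\cdot 2^{n-1}$---to get $n\mid p^r$, the desired contradiction.  Your approach exploits only the root-of-unity vanishing; combining it with the total-dimension divisibility, as the paper does, buys a cleaner case-free finish.
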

\begin{rmk}
By the same arguments as for $\LCEL$, any such $L$ inherits monotonicity from $X$ and is orientable and spin.  If the $2$-periodicity depends on the choice of background class for $X$ (e.g.~if the periodicity comes from an invertible element in $QH^*(X, b; \K)$ that only exists for a specific choice of $b$) then the result only applies to relative spin structures with this background class.
\end{rmk}

This immediately proves \cref{thmLCEL}, since for all $n$ we have
\[
QH^*(\C\P^n, b; \K) = \K[H] / (H^{n+1} -(-1)^{\ip{b}{[\text{line}]}}),
\]
where $H$ is the hyperplane class which is a degree $2$ invertible.  Here $[\text{line}]$ is the homology class of the curve which contributes to the quantum product $H^n \qcup H$.

Before proving \cref{corLagPSU} we need

\begin{lem}[{\cite[Th\'eor\`eme 11.4]{Borel}, \cite[Corollary 4.2]{BauBrow}}]\label{HPSU}
If $\Char \K = p > 0$ and $p^r$ is the greatest power of $p$ dividing $n$ then we have an isomorphism of graded algebras
\[
H^*(\PSU(n); \K) \cong \Lambda (x_1, x_3, \dots, \widehat{x}_{2p^r-1}, \dots, x_{2n-1}) \otimes R[y]/(y^{p^r}),
\]
where the $\Lambda$ denotes the exterior algebra over $\K$ generated by elements $x_{2j-1}$ of degree $2j-1$; except in the case $p=2$ and $r=1$ where the relation $x_1^2 = 0$ should be replaced by $x_1^2 = y$.  If $\Char \K = 0$ then as graded algebras
\begin{flalign*}
\label{eqHPSUz}
&& H^*(\PSU(n); \K) &\cong \Lambda (x_3, x_5, \dots, x_{2n-1}).&&\qed
\end{flalign*}
\end{lem}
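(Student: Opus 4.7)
The plan is to deduce $H^*(\PSU(n); \K)$ from the Serre spectral sequence of the fibration
\[
\SU(n) \to \PSU(n) \to B(\Z/n)
\]
arising from the free action of the centre $Z(\SU(n)) \cong \Z/n$ on $\SU(n)$. Since the deck group acts by left translation on a connected Lie group the local system is trivial, so $E_2^{p,q} = H^p(B\Z/n; \K) \otimes H^q(\SU(n); \K)$, with the classical $H^*(\SU(n); \K) = \Lambda(x_3, x_5, \dots, x_{2n-1})$ in the fibre. When $\Char \K = 0$ the sequence collapses immediately because $H^{>0}(B\Z/n; \Q) = 0$, which gives the second displayed formula.

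For $\Char \K = p > 0$ I would first reduce to the $p$-primary part of the centre: the inclusion $\Z/p^r \hookrightarrow \Z/n$ induces an $\F_p$-cohomology isomorphism, so we may replace $B\Z/n$ by $B\Z/p^r$, whose mod $p$ cohomology is $\Lambda(x_1) \otimes \F_p[y]$ (or $\F_2[x_1]$ in the exceptional case $p=2, r=1$, in which case $y$ is forced to equal $x_1^2$). The crucial step is to pin down the transgression. I would obtain this by observing that our fibration is the pullback of the universal $\SU(n)$-bundle along the classifying map $B\Z/p^r \to B\SU(n)$ of the centre inclusion. Universally $x_{2j-1}$ transgresses to the $j$th Chern class of the defining representation, and this representation restricts to $\Z/p^r$ as $L^{\oplus n}$, where $L$ is the tautological character with $c_1(L) = y$. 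Hence $d_{2j}(x_{2j-1}) = \binom{n}{j} y^j$.

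The combinatorial input is then Kummer's theorem: the smallest $j \geq 1$ with $\binom{n}{j} \not\equiv 0 \pmod p$ is exactly $j = p^r$. So the generators $x_3, x_5, \dots, x_{2p^r-3}$ survive all differentials, while $x_{2p^r-1}$ is killed on the $E_{2p^r}$ page and simultaneously imposes $y^{p^r} = 0$. For $j > p^r$ the target class $y^j$ already vanishes by that page, so the remaining $x_{2j-1}$ also survive; together with $x_1$ coming from the base factor, these classes assemble into the claimed algebra, and in the exceptional $p=2, r=1$ case the identity $y = x_1^2$ already holds at $E_2$ and just truncates $x_1$ at order four.

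The main obstacle is ruling out spurious higher differentials after the first non-trivial transgression, since the naive combinatorial analysis only explains the differentials on the transgressive line. I would handle this by a derivation/naturality argument: the surviving generators are either transgressive (so can only be targeted by a transgression, which would have to land on an already-dead class) or pulled back from the base (so survive because $B\Z/p^r$ is the base). A cross-check of total mod $p$ ranks against the rational Poincar\'e polynomial, corrected by the combinatorics of $\binom{n}{j} \pmod p$, confirms that no further cancellation can occur.
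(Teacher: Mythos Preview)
The paper does not prove this lemma at all; it is stated with citations to Borel and Baum--Browder and closed with a \qed. Your sketch is essentially the classical argument those references carry out, via the Serre spectral sequence of $\SU(n)\to\PSU(n)\to B\Z/n$ (or its $p$-primary reduction), and your identification of the transgressions by naturality from the universal bundle over $B\SU(n)$ together with the Kummer/Lucas computation of $\binom{n}{j}\pmod p$ is correct. One phrasing slip: transgressive fibre classes \emph{support} transgressions rather than being ``targeted by'' them; the actual point you need is that $x_{2j-1}\in E_r^{0,2j-1}$ can only carry a nonzero differential when $r=2j$, and for $j>p^r$ the target $\binom{n}{j}y^j$ is already zero on that page because $y^{p^r}$ was killed by $d_{2p^r}$.

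The one genuine gap is the multiplicative extension problem when $p=2$. Knowing $E_\infty$ as a bigraded algebra does not by itself give $x_{2j-1}^2=0$ in $H^*(\PSU(n);\F_2)$: the square lies in strictly positive filtration and could in principle equal some nonzero class built from $x_1$, $y$, and lower $x$'s. For $\Char\K$ odd or zero, graded-commutativity forces odd-degree squares to vanish and then a dimension count against $E_\infty$ finishes the argument; but for $p=2$ one needs extra input. Borel and Baum--Browder resolve this using the Hopf algebra structure on $H^*(\PSU(n);\F_2)$ (Borel's structure theorem forces a tensor product of truncated polynomial algebras, and the $E_\infty$ data then pins down the truncation heights). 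Your ``cross-check of total mod $p$ ranks'' cannot detect this, since the issue is purely about the ring structure, not the additive one.
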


We can now give the proof.

\begin{proof}[Proof of \cref{corLagPSU}]
The equality $N_L = 2n$ follows directly from \cref{thmExteriorAlgebraNL} with $\K=\Q$.

Let $p = \Char \K$ (prime or zero), and suppose first that $n$ is a power of $p$.  By \cref{HPSU} $H^*(\PSU(n); \K)$ is generated as a $\K$-algebra by elements of degree at most $2n-3$ (when $n=p=2$ this requires the relation $x_1^2=y$), so by \cref{BCWideNarrow}  we see that \ref{PSUitm2} holds.  This is how Evans and Lekili proved wideness in \cite{EL2}.

Now suppose that $n$ is not a power of $p$.  This time $H^*(\PSU(n); \K)$ is generated as a $\K$-algebra by elements of degree $\leq 2n-1$, so for any given $\mathcal{L}$ and $s$ \cref{BCWideNarrow} tells us that $L^\flat$ is either narrow or wide.  We need to rule out the latter, so suppose for contradiction that $L^\flat$ is wide.

Assume that $p$ is prime and let $p^r$ be its greatest power dividing $n$.  We then have
\begin{equation}
\label{eqPSUPF}
P_F(S) = (1+S^2+\dots+S^{2p^r-2}) \prod_{j=1, \dots, \widehat{p^r}, \dots, n} (1+S^{2j-1}) \mod S^{N_L} - 1,
\end{equation}
and from \cref{lemPeriodic} we get that $p^r \cdot 2^{n-1}$ is divisible by $N_L/2=n$ and that $P_F(\zeta)=0$ for any primitive $2n$th root of unity $\zeta$.  Setting $S=\zeta$ in \eqref{eqPSUPF}, the first bracketed term cannot vanish, so we obtain $2j-1=n$ for some $j$ in $\{1, \dots, \widehat{p^r}, \dots, n\}$.  Thus $n$ is odd, and the condition $n | p^r \cdot 2^{n-1}$ forces $n$ to divide $p^r$.  Since $p^r$ is a proper factor of $n$, this is impossible.  We are left to deal with $p=0$, but in this case the same argument applies if $p^r$ is interpreted as $1$.
\end{proof}

\begin{rmk}
Iriyeh's proof \cite[pp.~260--261]{Iriyeh} that $\LCEL$ is narrow in characteristic $2$ when $n$ is even but not a power of $2$ is similar in spirit.  He uses $2$-periodicity in the $\Z/2n$-grading, plus Poincar\'e duality, to deduce that if $n$ is even then $HF^*(\LCEL, \LCEL)$ has the same rank in every degree.  If $\LCEL$ were non-narrow, and hence wide, then this would mean that $2n$ divides the sum of the $\Z/2$ Betti numbers of $\LCEL$.  This sum is a power of $2$, so $n$ itself must be a power of $2$.
\end{rmk}


\section{Trivial vector bundles}
\label{secTrivialVBs}


\subsection{Restricting Chern classes}

Recall from \cref{sscFloerMorse} that for a monotone Lagrangian brane $L^\flat$ the PSS map $H^{< N_L}(L; \K) \rightarrow HF^*(L^\flat, L^\flat; \Lambda)$ intertwines the closed--open map
\[
\CO : QH^*(X; \Lambda) \rightarrow HF^*(L^\flat, L^\flat; \Lambda),
\]
with the classical restriction map $H^*(X; \K) \rightarrow H^*(L; \K)$ on classes of degree $<N_L$.  In particular

\begin{lem}
If $\alpha \in H^{\leq N_L}(X; \K)$ restricts to $0$ in $H^*(L; \K)$ then $\CO(\alpha)=0$.\hfill$\qed$
\end{lem}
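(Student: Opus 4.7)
The plan is to use the filtered pearl description of Floer theory from \cref{sscFloerMorse}. The closed--open map admits an expansion
\[
\CO(\alpha) = \CO^0(\alpha) + T \cdot \CO^1(\alpha) + T^2 \cdot \CO^2(\alpha) + \dots,
\]
where $\CO^r(\alpha)$ counts pearly configurations whose disc components have total Maslov class $rN_L$. Since $\CO$ is degree-preserving and $T$ has degree $N_L$, each $\CO^r$ shifts cohomological degree by $-rN_L$, and the discussion at the end of \cref{sscFloerMorse} identifies the leading term $\CO^0$ with the classical pullback $H^*(X; \K) \to H^*(L; \K)$.

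For $\alpha \in H^j(X; \K)$ with $j < N_L$ the argument is then immediate: every higher-order term $\CO^r(\alpha)$ with $r \geq 1$ lands in $H^{j - rN_L}(L; \K)$, a group concentrated in strictly negative degree and hence zero. So $\CO(\alpha)$ reduces to $\CO^0(\alpha) = \alpha|_L$, which vanishes by hypothesis; passing from the pearl chain level to $HF^*$ via the PSS map of \cref{sscFloerMorse} (which is available precisely because $\alpha|_L$ has degree $< N_L$ and is therefore automatically a Floer cocycle) converts this chain-level statement into the claimed vanishing $\CO(\alpha) = 0$ in Floer cohomology.

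The only remaining case is the boundary degree $j = N_L$, where the potential correction $T \cdot \CO^1(\alpha) \in T \cdot H^0(L; \K)$ is no longer forced to vanish by dimension and must be handled separately---this is the main obstacle of the proof. I would approach it by unwinding the moduli problem defining $\CO^1(\alpha)$: the count of Maslov-$N_L$ discs with one interior marked point on a cycle Poincar\'e dual to $\alpha$ and with boundary on $L$ should, via a standard cobordism argument, be expressible in terms of the restriction $\alpha|_L$, and therefore vanish whenever that restriction does. Alternatively, one can hope to reduce the claim to \cref{BCWideNarrow}-style degree constraints in the Oh spectral sequence, where the $T^1$ correction in degree $N_L$ is captured by the first nontrivial differential and its compatibility with $\CO$.
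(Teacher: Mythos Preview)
Your treatment of the case $\deg\alpha < N_L$ is exactly the paper's argument: the paper simply records the intertwining of $\CO$ with classical restriction via the PSS map (recalled immediately before the lemma) and marks the statement with a bare $\qed$. So for that range you and the paper agree, and there is nothing more to say.

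The boundary case $\deg\alpha = N_L$, however, is not a difficulty to be overcome---it is actually false, and the $\leq$ in the statement is a typo for $<$. Take $X=\C\P^1$ with $L$ the equator, so $N_L=2$. The hyperplane class $H\in H^2(X;\K)$ restricts to zero on $L$ for trivial reasons ($H^2(S^1)=0$), yet $H$ is invertible in $QH^*(\C\P^1;\Lambda)$ and $\CO$ is a unital ring map, so $\CO(H)$ is invertible in $HF^*(L,L;\Lambda)$. Since the equator is wide, $\CO(H)\neq 0$. This is precisely the $T\cdot\CO^1(\alpha)$ contribution you were worried about, and it shows that your proposed ``standard cobordism argument'' expressing $\CO^1(\alpha)$ in terms of $\alpha|_L$ cannot exist: the relevant disc count genuinely sees the global geometry of $\alpha$, not just its restriction. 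Your instinct to flag this degree as the main obstacle was correct; the resolution is that the paper never needs or claims this case (the only application, \cref{COChern}, uses $2j<N_L$), and the preceding sentence in the paper explicitly restricts to degrees $<N_L$.
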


An obvious corollary of this is the following observation.

\begin{prop}
\label{COChern}
Suppose $L^\flat \subset X$ is a monotone Lagrangian brane and $E \rightarrow X$ is a complex vector bundle whose restriction to $L$ is trivial.  For all $j$ with $1 \leq j < N_L / 2$ we have $\CO(c_j(E)) = 0$, where $c_j(E)$ is the $j$th Chern class of $E$.\hfill$\qed$
\end{prop}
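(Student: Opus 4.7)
The plan is to deduce this as an immediate consequence of the lemma that directly precedes the statement, which itself repackages the PSS-$\CO$ compatibility recorded in \cref{sscFloerMorse}. Concretely, two small ingredients are needed, and I would verify them in turn.

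First, a degree check: the class $c_j(E)$ lies in $H^{2j}(X;\K)$, and since $j$ is a positive integer satisfying $j < N_L/2$, we have $2j \leq N_L - 1 < N_L$. Hence $c_j(E)$ sits in the degree range where, by \cref{sscFloerMorse}, the closed--open map agrees (through PSS) with the classical restriction $H^{<N_L}(X;\K) \to H^{<N_L}(L;\K)$. Second, a restriction check: by naturality of Chern classes the restriction of $c_j(E)$ to $L$ equals $c_j(E|_L)$, and since $E|_L$ is trivial as a complex vector bundle and $j \geq 1$, this Chern class is zero in $H^{2j}(L;\K)$.

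Combining these two observations with the preceding lemma applied to $\alpha = c_j(E)$ immediately gives $\CO(c_j(E)) = 0$, as required. There is no genuine obstacle: the real content of the statement is already contained in the PSS/$\CO$ intertwining from \cref{sscFloerMorse}, together with the functoriality of Chern classes under pullback. Indeed this is why the author flags the result as an ``obvious corollary'' of the preceding lemma, and the only value added by writing it down separately is to isolate the form in which it will be applied later, for instance to the projective Stiefel examples.
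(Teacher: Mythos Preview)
Your proposal is correct and matches the paper's approach exactly: the paper presents this proposition as an ``obvious corollary'' of the preceding lemma (with no further proof given), and your two checks---degree $2j < N_L$ and $c_j(E)|_L = c_j(E|_L) = 0$---are precisely the verifications needed to invoke that lemma.
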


The point of making this statement separately is that many algebraic varieties $X$ carry natural vector bundles whose Chern classes generate large parts of the cohomology (e.g.~as in \cref{sscLagrangianFlagVariety}).  Moreover, these classes can be easily manipulated using exact sequences and the splitting principle.


\subsection{Worked example: projective Stiefel manifolds}

We now apply this technique to the following family of examples, introduced to the author by Frol Zapolsky (who proved the wideness part of \cref{thmProjStiefel}); these spaces appear in his work \cite{ZapolskyGrassmannians} constructing quasi-morphisms on contactomorphism groups.  Fix positive integers $n$ and $k$ with $n \geq 2, k$.  We view $\C\P^{kn-1}$ as the projectivisation of the space of $k \times n$ matrices and let $\mathrm{PSU}(k)$ act by left multiplication in the obvious way.  This action is Hamiltonian, with moment map
\[
\ip{\mu([w])}{A} = -\frac{i}{2} \frac{\Tr w^\dag Aw}{\Tr w^\dag w}
\]
for all $A$ in $\mathfrak{su}(k)$ and all $k\times n$ matrices $Z$, and the symplectic reduction at the zero level is the (complex) Grassmannian $\Gr(k, n)$.  The set $\mu^{-1}(0)$ embeds as a Lagrangian
\[
L \subset X = \Gr(k, n)^- \times \C\P^{kn-1} = \Gr(n-k, n) \times \C\P^{kn-1},
\]
where ${}^-$ denotes reversal of the sign of the symplectic form, and is diffeomorphic to the (complex) projective Stiefel manifold, i.e.~the quotient of the Stiefel manifold $V$ parametrising unitary $k$-frames in $\C^n$ by the obvious action of $\U(1)$.  The case $k=n$ gives the family $\LCEL$ from \cref{sscLCEL}, whilst $k=1$ gives the diagonal $\Delta \subset (\C\P^{n-1})^- \times \C\P^{n-1}$.  We shall show

\begin{thm}
\label{thmProjStiefel}
If $p$ denotes $\Char \K$ and $p^r$ its greatest power dividing $n$ (interpreted as $1$ if $p=0$) then either: $k \leq p^r$, in which case $L$ is wide for all choices of relative spin structure and flat line bundle; or $k > p^r$, in which case $L$ is narrow for all such choices.
\end{thm}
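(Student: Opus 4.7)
My strategy is to compute $H^*(L; \K)$ explicitly and then resolve the dichotomy via \cref{BCWideNarrow} in the wide case, and via the quantum relation $H_2^{kn} = \pm T^k$ on the $\C\P^{kn-1}$ factor in the narrow case, where $H_2$ denotes the hyperplane class of $\C\P^{kn-1}$.

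\textbf{Step 1: Cohomology of $L$.} I would run the Serre spectral sequence of the Borel-construction fibration $V_k(\C^n) \to L \to BS^1$, where $V_k(\C^n) = \U(n)/\U(n-k)$ is the Stiefel manifold with exterior cohomology $\Lambda(x_{2(n-k)+1}, \ldots, x_{2n-1})$, $x_{2j-1}$ being the suspension of the $j$-th universal Chern class. The $S^1$-action is by scalars, so the associated $n$-plane bundle on $L$ is a direct sum of $n$ copies of $\mathcal{O}(-1)|_L$ with total Chern class $(1 - H_2|_L)^n$; under the induced classifying map $L \to BS^1 \to B\U(n)$, the universal transgression $x_{2j-1} \mapsto c_j$ pulls back to
\[
d_{2j}(x_{2j-1}) = (-1)^j \binom{n}{j}\, e^j,
\]
where $e \in H^2(BS^1; \K)$ is the Euler class. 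Setting $m^* := \min\{j \in [n-k+1, n] : \binom{n}{j} \not\equiv 0 \bmod p\}$ with $p = \Char \K$, the spectral sequence gives $H^*(L; \K) \cong \Lambda' \otimes_\K \K[e]/(e^{m^*})$ as graded vector spaces, with $\Lambda'$ the exterior algebra on the surviving $x_{2j-1}$ (each of degree $\leq 2n - 3$). I would also note that $e = H_2|_L$ (up to sign), because the principal $S^1$-bundle $V_k(\C^n) \to L$ is cut out from the tautological $\C^\times$-bundle over $\C\P^{kn-1}$.

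\textbf{Step 2: Evaluating $m^*$ via Kummer.} If $k \leq p^r$, then for $j \in [n-k+1, n-1]$ the difference $n - j$ lies in $[1, p^r - 1]$ and has base-$p$ digits supported only in positions $0, \ldots, r-1$, where $n = p^r m$ has vanishing digits; so the addition $j + (n-j) = n$ must carry into position $r$, forcing $p \mid \binom{n}{j}$. Combined with $\binom{n}{n} = 1$ this gives $m^* = n$. If instead $k > p^r$, then $n - p^r \in [n-k+1, n-1]$ and Lucas gives $\binom{n}{p^r} \equiv m_0 \not\equiv 0 \bmod p$ (the $r$-th base-$p$ digit of $n$), so $m^* \leq n - p^r < n$.

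\textbf{Step 3: Dichotomy.} One checks $N_L = 2n$: for $k < n$, $L$ is simply connected (from the long exact sequence of the $S^1$-bundle, using that $V_k(\C^n)$ is $(2(n-k))$-connected), so every disc lifts to a sphere and the minimal Chern number of $X$ is $\gcd(n, kn) = n$; the case $k = n$ is handled in \cref{sscLCEL}. If $k \leq p^r$, all generators of $H^*(L; \K)$ lie in degrees $\leq 2n - 3 < N_L - 1$, so \cref{BCWideNarrow} in its strict form forces $L^\flat$ to be wide for every choice of $\mathcal{L}$ and $s$. If $k > p^r$, the intertwining of $\CO$ with restriction gives $h^{m^*} = \CO(H_2^{m^*}) = 0$ in $HF^*$ (since $e^{m^*} = 0$ in $H^*(L; \K)$ and $2m^* < N_L$), where $h := \CO(H_2)$; meanwhile applying $\CO$ to the relation $H_2^{kn} = \pm T^k$ in $QH^*(\C\P^{kn-1}; \Lambda)$ pulled back to $X$ gives $h^{kn} = \pm T^k$, and since $kn > m^*$ we deduce $T^k = h^{m^*} \qcup h^{kn - m^*} \cdot (\pm 1) = 0$, so $HF^* = 0$ and $L^\flat$ is narrow.

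The main hurdle is the spectral-sequence identification of the transgressions $d(x_{2j-1}) = \pm \binom{n}{j} e^j$ in Step 1, which requires tracking the scalar structure of the $S^1$-action through the universal $\U(n)$-construction. Once that is in place, both directions of the dichotomy follow quickly from Kummer's theorem and the two standard Floer-theoretic tools above.
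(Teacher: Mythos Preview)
Your argument is correct, and both halves rest on the same two Floer-theoretic pillars as the paper (\cref{BCWideNarrow} for wideness, invertibility of the hyperplane class under $\CO$ for narrowness), but the \emph{narrowness} half reaches those pillars by a genuinely different route. The paper never computes $H^*(L;\K)$ for that direction: instead it observes geometrically that the twisted tautological bundle $E(1)=\pr_1^*E\otimes\pr_2^*\mathcal{O}(1)$ becomes trivial on $L$, applies \cref{COChern} to kill $\CO(c_j(E(1)))$ for $1\le j<n$, and then reads off $\binom{n}{n-p^r}\,\CO(H)^{\,n-p^r}=0$ from the Whitney relation $c(E(1))\qcup c(F(1))=(1+H)^n$ together with $\rank F(1)=n-k$. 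Your approach instead runs the Serre spectral sequence for $V_k(\C^n)\to L\to BS^1$ to see directly that $(H_2|_L)^{m^*}=0$ in $H^{<N_L}(L;\K)$, then invokes the PSS/restriction compatibility of $\CO$. The paper's method is geometrically lighter (no transgression computation, no Kummer/Lucas juggling) and showcases the ``trivial bundle'' philosophy of \cref{COChern}; yours is more self-contained, recovering the cited result from \cite{ProjStiefel} rather than importing it, and uses a single spectral-sequence calculation to feed both the wide and the narrow case. For the wide direction the two proofs are essentially the same. One cosmetic slip: your bound ``degrees $\le 2n-3$'' misses the generator $e$ in degree $2$ when $n=2$, but since $2<2n-1=N_L-1$ the strict form of \cref{BCWideNarrow} still applies and nothing is lost.
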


Note that this is consistent with \cref{corLagPSU} when $k=n$, and with the fact that $\Delta$ is always wide when $k=1$ by \cref{BCWideNarrow} (for a specific choice of relative spin structure we also have the isomorphism $HF^*(\Delta, \Delta) \cong QH^*(\C\P^{n-1})$).  The $k=n$ case behaves slightly differently from the others (the Lagrangian is not simply connected, for example), so since we have already dealt with it by other means we henceforth exclude it.  The first task is to establish the basic properties of these Lagrangians:

\begin{lem}
$L$ is monotone and orientable, with $N_L=2n$.
\end{lem}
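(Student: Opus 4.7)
The plan is to deduce orientability, monotonicity, and the value of $N_L$ from a single topological input: that $L$ is simply connected. Since we have excluded the $k = n$ case and are assuming $k < n$, the Stiefel manifold $V = V(k,n) = \U(n)/\U(n-k)$ is $2$-connected; this follows by induction on $k$, starting from $V(1, n) = S^{2n-1}$ and using the fibration $V(k-1, n-1) \to V(k,n) \to S^{2n-1}$ that forgets the first frame vector. The $\U(1)$-action by scalar multiplication on frames is free, so $V \to L$ is a principal $\U(1)$-bundle, and the long exact sequence of homotopy groups yields $\pi_2(L) = \pi_1(\U(1)) = \Z$ and $\pi_1(L) = \pi_0(\U(1)) = 0$. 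In particular $L$ is simply connected, and hence orientable.

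To handle monotonicity and the minimal Maslov number I would use $\pi_1(L) = 0$ to reduce everything to $\pi_2(X)$. The long exact sequence of the pair $(X, L)$ shows that $\pi_2(X) \twoheadrightarrow \pi_2(X, L)$ is surjective, so every class in $\pi_2(X, L)$ is represented by a $2$-sphere in $X$. On such classes the Maslov index and symplectic area coincide with $2c_1(X)$ and $\omega_X$ respectively, so $L$ inherits monotonicity from $X$ (with $\lambda_L = \lambda_X$)---and one can always rescale the two factors so that $X$ is monotone.

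It remains to pin down $N_L$, i.e.\ the positive generator of $2c_1(X)(\pi_2(X)) \subset \Z$. Here $\pi_2(X) = \pi_2(\Gr(n-k,n)) \oplus \pi_2(\C\P^{kn-1}) = \Z \oplus \Z$; on the generator of the first summand the first Chern class evaluates to $\pm n$ (using the formula $c_1(\Gr(j, n)) = -n \cdot c_1(E_j)$ derived in \cref{sscLagrangianFlagVariety}), and on the generator of the second it evaluates to $\pm kn$. Thus the image is $\gcd(2n, 2kn)\Z = 2n\Z$, giving $N_L = 2n$ as required.

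The only mildly technical point is the connectivity statement for $V(k,n)$, which is a standard consequence of the Stiefel-manifold fibration; the rest of the argument is essentially automatic once $L$ is known to be simply connected.
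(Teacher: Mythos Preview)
Your argument for simple connectivity, orientability, and the computation of $N_L = 2n$ is correct and parallels the paper's own proof (which uses a slightly different iterated sphere fibration for $V$, but to the same effect).

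The gap is in your treatment of monotonicity.  You write that ``one can always rescale the two factors so that $X$ is monotone,'' but this is not available here: the symplectic form on the $\Gr(k,n)$ factor is not a free parameter.  By construction $L = \mu^{-1}(0)$ is Lagrangian in $X = \Gr(k,n)^- \times \C\P^{kn-1}$ precisely when $\Gr(k,n)$ carries the \emph{reduced} form coming from the $\PSU(k)$-reduction of $\C\P^{kn-1}$, so rescaling one factor relative to the other would destroy the Lagrangian condition.  What must actually be checked is that, with this specific reduced form, the monotonicity constants of $\Gr(k,n)$ and $\C\P^{kn-1}$ agree.  The paper does this by an application of \cref{lemRedMonotone}: both $\C\P^{kn-1}\times\C\P^{(n-k)n-1}$ and $\Gr(k,n)\times\Gr(n-k,n)$ arise as reductions of $\C^{n^2}$ containing the image of the monotone Lagrangian $\U(n)$, so they inherit the same monotonicity constant.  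Without this (or a direct area computation on a generating sphere in $\Gr(k,n)$), the monotonicity claim is unsupported.
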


\begin{proof}
First consider the Stiefel manifold $V$.  Projecting a unitary frame to its first entry realises $V$ as a fibration over $S^{2n-1}$.  Projecting the fibre to its second entry then realises is as a fibration over $S^{2n-3}$, whose fibre is a fibration over $S^{2n-5}$, and so on, until we reach fibre $S^{2n-2k+1}$.  By iterating the long exact sequence in homotopy groups we see that $V$ is simply connected.  The long exact sequence in homotopy groups for the fibration $\U(1) \hookrightarrow V \twoheadrightarrow L$ then shows that $L$ is simply connected.  This proves that $L$ is orientable and (from the long exact sequence in homotopy for the pair $(X, L)$) that it is monotone if and only $X$ is monotone, with $N_L$ given by twice the minimal Chern number $N_X$.  To see that $N_X=n$, recall from \cref{FlagMinimalMaslov} that $\Gr(k, n)$ has minimal Chern number $n$, and we know that $\C\P^{kn-1}$ has minimal Chern number $kn$ (in fact, this is the special case $\Gr(1, kn)$), so their product has minimal Chern number $\gcd(n, kn)=n$.

It remains to prove monotonicity, and since $L$ is simply connected it suffices to show that $\C\P^{kn-1}$ and $\Gr(k, n)$ are monotone with the same monotonicity constant.  In fact, we claim that $\C\P^{kn-1}\times \C\P^{(n-k)n-1}$ (equipped with the sum of appropriately scaled Fubini--Study forms) and its $\PSU(k)\times\PSU(n-k)$-reduction $\Gr(k,n)\times\Gr(n-k, n)$ are monotone with the same monotonicity constant.  For this note that the Hamiltonian $\U(n)$-action on the space $\C^{n^2}$ of $n\times n$ matrices, with moment map \eqref{UnMomentMap}, restricts to Hamiltonian actions of both $\U(1)\times \U(1)$ and $\U(k)\times\U(n-k)$---in each case the first factor acts on the first $k$ rows and the second factor acts on the remaining $n-k$ rows.  The reductions are $\C\P^{kn-1}\times \C\P^{(n-k)n-1}$ and $\Gr(k, n) \times \Gr(n-k, n)$, and the symplectic form on the latter comes from the symplectic reduction of the former by the residual action of $\PSU(k) \times \PSU(n-k)$.  The claim then follows from \cref{lemRedMonotone}, by considering the monotone Lagrangian $\U(n)$ in $\C^{n^2}$ from \cref{sscLagrangianFlagVariety}: this Lagrangian is monotone (\cref{lemUnMonotone}), so both reductions are monotone with the same monotonicity constant.
\end{proof}

\begin{lem}
\label{lemProjStiefelRelSpin}
$L$ is relatively spin.
\end{lem}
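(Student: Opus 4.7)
The plan is to show that the restriction map $H^2(X; \Z/2) \to H^2(L; \Z/2)$ is surjective; then $w_2(L)$ automatically lifts to a class $b \in H^2(X; \Z/2)$, which serves as the background class of a relative spin structure on $L$.

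First I would refine the connectivity argument from the previous lemma. The iterated description of $V=V_k(\C^n)$ as a tower of sphere bundles, with fibres $S^{2m-1}$ for $m$ running through $n-k+1, \dots, n$, shows that since $k < n$ every sphere involved has dimension at least $3$, so $V$ is in fact $2$-connected. Feeding this into the long exact sequence for $\U(1) \hookrightarrow V \twoheadrightarrow L$ upgrades the previous conclusion $\pi_1(L)=0$ to $\pi_2(L) \cong \pi_1(\U(1)) = \Z$, whence $H^2(L; \Z) \cong \Z$ and $H^2(L; \Z/2) \cong \Z/2$.

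Next I would identify this $\Z/2$ as the restriction of the hyperplane class $H$ from $\C\P^{kn-1}$. After rescaling so that $ww^\dag = I_k/k$ (equivalently, unit Frobenius norm), $V$ embeds into the unit sphere $S^{2kn-1} \subset \C^{kn}$ and the $\U(1)$-action scaling the frame coincides with the restriction of the Hopf action; thus $V \to L$ is the pullback of the Hopf bundle along $L \hookrightarrow \C\P^{kn-1}$, and its Euler class is $\pm H|_L$. The Gysin sequence combined with the $2$-connectedness of $V$ shows that cupping with this Euler class gives an isomorphism $H^0(L) \to H^2(L)$, so $H|_L$ generates $H^2(L; \Z)$ and hence $H|_L \bmod 2$ generates $H^2(L; \Z/2)$. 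Surjectivity of $H^2(X; \Z/2) \to H^2(L; \Z/2)$, already from the $\C\P^{kn-1}$ factor alone, follows immediately.

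I do not anticipate a serious obstacle: the argument is essentially topological, and the only point requiring care is compatibility of the various $\U(1)$-scalings, which is transparent since they all arise from matrix multiplication by a unit complex scalar. One could alternatively try to apply \cref{lemRedRelSpin} to the Lagrangian correspondence, but this would require exhibiting a specific extension of $\nu L$ over $X$, and the explicit surjectivity argument above seems cleaner.
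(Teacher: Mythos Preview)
Your argument is correct and complete. The key observation---that $V_k(\C^n)$ is $2$-connected for $k<n$, so the Gysin sequence forces the restricted hyperplane class $H|_L$ to generate $H^2(L;\Z/2)\cong\Z/2$---cleanly establishes surjectivity of $H^2(X;\Z/2)\to H^2(L;\Z/2)$, and hence relative spinness.

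This is a genuinely different route from the paper's. The paper argues constructively: it exhibits a specific relative spin structure with background class $\pr_2^*w_2(\C\P^{kn-1})$ by using the tangent-bundle splitting from the proof of \cref{lemRedMonotone} to identify $\pr_2^*T\C\P^{kn-1}|_L\oplus TL$ with the doubled bundle $TL\oplus TL$ plus a trivial summand $J(\mathfrak{k}\cdot L)$, both of which carry canonical spin structures. Your approach is purely topological and more elementary---it avoids the reduction machinery entirely---but it is less explicit: it tells you \emph{that} a background class exists without naming it. The paper's construction has the advantage of producing a distinguished relative spin structure (and, in the Remark following, a second one with background class $\pr_1^*w_2(\Gr(k,n))$), though for the purposes of \cref{thmProjStiefel}, which is stated for \emph{all} relative spin structures, your existence argument is entirely sufficient. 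Your closing remark correctly anticipates that a \cref{lemRedRelSpin}-style approach is available; the paper's proof is in that spirit, though it uses a doubling trick rather than extending the normal bundle directly.
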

\begin{proof}
Let $\pr_1$ and $\pr_2$ be the projections from $X=\Gr(k, n)^- \times \C\P^{kn-1}$ onto its two factors.  We claim that in fact $L$ carries a natural relative spin structure with background class $\pr_2^*w_2(\C\P^{kn-1})$, i.e.~that $\pr_2^*T\C\P^{kn-1}|_L \oplus TL$ carries a natural spin structure.  Fix a compatible almost complex structure $J$, and apply the argument from the proof of \cref{lemRedMonotone} (with $X$ replaced by $\C\P^{kn-1}$ and $Z$ by $L$) to see that $\pr_2^*T\C\P^{kn-1}|_L = J(\mathfrak{k}\cdot L) \oplus TL$.  Thus $\pr_2^*T\C\P^{kn-1}|_L \oplus TL$ is the doubled bundle $TL \oplus TL$ plus the trivial summand $J(\mathfrak{k}\cdot L)$.  The former has a natural spin structure from the doubling construction of \cref{lemRedRelSpin}, whilst the latter has a natural spin structure from its trivialisation, completing the proof.
\end{proof}

\begin{rmk}
Similarly $L$ has a natural spin structure with background class $\pr_1^*w_2(\Gr(k, n))$; now $\pr_1^*T\Gr(k, n)|_L \oplus TL$ is a quotient of the doubled bundle $TL \oplus TL$ by the trivial bundle $\mathfrak{k} \cdot L$.  In general $L$ need not be (absolutely) spin: take $k=1$ with $n$ odd for example.
\end{rmk}

We are now ready for

\begin{proof}[Proof of \cref{thmProjStiefel}]
Recall that $p$ denotes $\Char \K$ and $p^r$ the greatest power of $p$ dividing $n$ (taken to be $1$ if $p=0$).  Suppose first that $k > p^r$.  We need to show that $L$ is narrow for all choices of relative spin structure and flat line bundle.

Let $\pr_1$ and $\pr_2$ be as in the proof of \cref{lemProjStiefelRelSpin}, let $E$ be the tautological bundle over $\Gr(k, n)$ (of rank $k$) and let $F$ be the quotient $\underline{\C}^n/E$.  Consider the bundles
\[
E(1) \coloneqq \pr_1^* E \otimes \pr_2^* \mathcal{O}_{\C\P^{kn-1}}(1) \quad \text{and} \quad F(1) \coloneqq \pr_1^* F \otimes \pr_2^* \mathcal{O}_{\C\P^{kn-1}}(1).
\]
The short exact sequence
\[
0 \rightarrow E(1) \rightarrow \pr_2^* \mathcal{O}_{\C\P^{kn-1}}(1)^{\oplus n} \rightarrow F(1) \rightarrow 0
\]
gives $c(E(1)) \smile c(F(1)) = (1+H)^n$ in classical cohomology, where $H$ is the pullback of the hyperplane class from $\C\P^{kn-1}$, and the same then holds in quantum cohomology in degrees $<2n$.  In particular, we have
\begin{equation}
\label{ChernBinomial}
\sum_{l=0}^j c_l(E(1)) \qcup c_{j-l}(F(1)) = \binom{n}{j} H^{\qcup j} \quad \text{in } QH^*
\end{equation}
for $j=0, \dots, n-1$.

We claim $E(1)|_L$ is trivial so by \cref{COChern} we have $\CO(c_j(E(1)))=0$ for $1 \leq j < n$ (using the fact that $N_L=2n$).  Applying this to $\CO$ of \eqref{ChernBinomial}, we obtain
\[
\CO(c_j(F(1)))=\binom{n}{j} \CO(H)^{\qcup j}
\]
for $j=1, \dots, n-1$, and since $F(1)$ has rank $n-k$ we conclude that both sides vanish for $j=n-k+1, \dots, n-1$.  Setting $j=n-p^r$ we get
\[
\binom{n}{n-p^r} \CO(H)^{\qcup (n-p^r)} = 0 \quad \text{in } HF^*.
\]
The left-hand side is invertible since $\binom{n}{n-p^r}=\binom{n}{p^r}$ is coprime to $p=\Char \K$ and $H$ is invertible in $QH^*$, so the only possibility is that $HF^*=0$, i.e.~that $L$ is narrow as claimed.

It remains (for the case $k>p^r$) to show that $E(1)|_L$ is trivial. To see that this is the case note that the fibre of $E(1)$ over a point $(V \subset \C^n, l \subset \C^{kn}) \in X$, where $V$ is a subspace of rank $k$ and $l$ is a line, comprises linear maps $l \rightarrow V$.  If $(V, l)$ lies in $L$ then there exists a $k \times n$ matrix $A$ with orthonormal rows such that $l$ is the span of $A$ and $V$ is the span of the rows of $A$.  We therefore have $k$ natural maps $l \rightarrow V$ given by projecting an element $\lambda \cdot A$ of $l$ to each of its $k$ rows.  These maps define $k$ sections of $E(1)|_L$ which provide a trivialising frame, completing the proof of narrowness for $k>p^r$.

Now assume $k \leq p^r$.  We claim that $H^*(L; \K)$ is generated as an algebra in degrees $< 2n-1$ so wideness follows from \cref{BCWideNarrow}.  This claim can be seen from the computation of the full cohomology algebra in \cite[Theorem 1.1, Theorem 1.2(i)]{ProjStiefel}, noting that the smallest integer $N > n-k$ such that $\binom{n}{N} \not\equiv 0 \mod p$ is $n$.
\end{proof}

\begin{rmk}
The narrowness result could have been proved by periodicity considerations as in \cref{sscLCEL}, and conversely the results there on $\LCEL$ (the $k=n$ case above) could have been proved using these Chern class arguments.  It is interesting to note that whilst the former technique requires full knowledge of the Betti numbers of $L$, the latter relies on much softer calculations but is more dependent on the geometry of the ambient manifold $X$.
\end{rmk}


\section{The symplectic Gysin sequence}
\label{secGysin}


\subsection{The exact triangle}

In this section we illustrate the \emph{symplectic Gysin sequence} by filling in a missing computation from \cite{SmDCS} and studying a related example.  There are two distinct approaches to this theory in the literature, using different methods but leading to similar results: the Lagrangian circle bundle construction and Floer--Gysin sequence of Biran \cite{BirNonInt}, Biran--Cieliebak \cite{BirCieSubcrit} and Biran--Khanevsky \cite{BirKhan}, and Perutz's symplectic Gysin sequence associated to a spherically fibred coisotropic submanifold \cite{PerGysin}.  We shall follow the latter because Perutz explicitly deals with coefficient rings of characteristic other than $2$.  Strictly Perutz works with a Novikov variable that can have arbitrary real exponents, see \cite[Notation 1.5]{PerGysin}, but the monotonicity hypotheses mean that this is not strictly necessary and we can restrict to integer exponents as we have been using.

The setup is as follows.  $M$ and $N$ are closed symplectic manifolds and $L$ is a Lagrangian submanifold of $X \coloneqq M^- \times N$ (recalling that ${}^-$ denotes reversal of the sign of the symplectic form) such that the projections $\pr_M$ and $\pr_N$ to $M$ and $N$ respectively have the following properties: $\pr_M$ embeds $L$ in $M$; and $\pr_N$ exhibits $L$ as an oriented $S^k$-bundle over $N$.  As usual we assume that $L$ is monotone with minimal Maslov number $N_L$ at least $2$.  Perutz shows

\begin{thm}[{\cite[Theorem 6.2, Addendum 1.6]{PerGysin}}]
\label{thmGysin}
If $N_L = k+1$ and $L$ is equipped with (the trivial flat line bundle and) a relative spin structure whose background class is pulled back from $b_M \in H^2(M; \Z/2)$ then there is an exact triangle of $QH^*(N; \Lambda)$-modules
\begin{equation*}
\begin{tikzcd}
QH^{*-(k+1)}(N; \Lambda) \arrow{rr}{\widehat{e}\qcup} & & QH^*(N; \Lambda) \arrow{dl}{}
\\ & HF^*(L^\flat, L^\flat; \Lambda) \arrow{ul}{[1]}. &
\end{tikzcd}
\end{equation*}
The horizontal arrow is quantum product with $\widehat{e} = e + \nu T$, where $e$ is the Euler class of the oriented sphere bundle $L \rightarrow N$ and $\nu$ is the signed count of index $k+1$ discs through a point $x$ of $L$ which send a second boundary marked point to a \emph{global angular chain}.  The latter is a chain on $L$ which intersects a generic fibre of $\pr_N|_L$ in a single point and whose boundary is the union of the fibres over a chain in the base representing the Poincar\'e dual of the Euler class.  The $QH^*(N; \Lambda)$-action on $HF^*(L^\flat, L^\flat; \Lambda)$ is by pulling back to $QH^*(X; \Lambda)$ and using the closed--open map.
\end{thm}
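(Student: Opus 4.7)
The plan is to model $HF^*(L^\flat, L^\flat; \Lambda)$ by a pearly chain complex built from a Morse function adapted to the sphere bundle structure, and extract the Gysin triangle from a two-step filtration. Concretely, pick an auxiliary Morse function $f_N$ on $N$ and a smooth section of $\pr_N|_L$ near each critical point of $f_N$, then choose $f:L\to\R$ equal to $f_N\circ \pr_N|_L$ plus a self-indexing Morse function on each $S^k$-fibre with one minimum $P^-_x$ and one maximum $P^+_x$. The critical set of $f$ splits as two copies of $\mathrm{Crit}(f_N)$, sitting in degrees differing by $k$.

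From here I would filter the Biran--Cornea pearl complex $CM^*(f)\otimes\Lambda$ by placing the "downstairs" generators $P^-_x$ in filtration $0$ and the "upstairs" generators $P^+_x$ in filtration $1$. The associated graded differential is purely horizontal and splits as two copies of the Morse differential of $f_N$, so the $E_1$ page of the filtration spectral sequence is $QH^*(N;\Lambda)\oplus QH^{*-k}(N;\Lambda)[1]$, and the $d_1$ differential is a $\Lambda$-linear map $d_1:QH^{*-(k+1)}(N;\Lambda)\to QH^*(N;\Lambda)$ counting rigid "mixed" Morse-pearl trajectories that run from $P^+$-type to $P^-$-type generators. A dimension count using $N_L=k+1$ shows that the spectral sequence degenerates at $E_2$, so the theorem reduces to identifying $d_1$ with $\widehat{e}\qcup{-}$ and recognising the resulting complex as the mapping cone triangle.

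The contributions to $d_1$ naturally organise by $T$-degree. The coefficient of $T^0$ consists of classical Morse trajectories connecting $P^+_x$ to $P^-_y$: after applying the Thom isomorphism for the oriented sphere bundle, these count points of the $0$-dimensional moduli space dual to cup product with the classical Euler class $e\in H^{k+1}(N)$. The coefficient of $T^1$ consists of pearls with a single disc of Maslov index $N_L=k+1$; standard Maslov-index and $\pi_2$ arguments rule out higher $T^r$ contributions for $r\geq 2$, and a careful analysis of the fibrewise orientation shows that each such rigid pearl is equivalent to counting $(u,z_0,z_1)$ with $u$ an index $k+1$ disc sending $z_0$ to a prescribed point of $L$ and $z_1$ to a global angular chain. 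This count is precisely the invariant $\nu$, yielding $d_1=(e+\nu T)\qcup{-}=\widehat{e}\qcup{-}$.

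The main obstacle is the last step: proving that $\nu$ is well-defined independent of the global angular chain, Morse data, and almost complex structure, and that signs (controlled by the relative spin structure) are compatible so $\nu$ appears with coefficient $+1$. This is where the hypothesis $b\in \pr_M^*H^2(M;\Z/2)$ enters, since the vertical $S^k$-direction in $TL$ is then untouched by the background twist and the doubling construction of \cref{lemRedRelSpin} applied to the vertical tangent bundle gives a canonical spin structure on the fibres compatible with the chosen relative spin structure. Finally, the $QH^*(N;\Lambda)$-module structure on the whole triangle is automatic because the composition $QH^*(N;\Lambda)\xrightarrow{\pr_N^*}QH^*(X;\Lambda)\xrightarrow{\CO}HF^*(L^\flat,L^\flat;\Lambda)$ is a unital ring map, and the pearl model, the filtration, and the identification of $d_1$ are all constructed in a manifestly $\pr_N^*$-equivariant way at the chain level.
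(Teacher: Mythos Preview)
The paper does not prove this theorem: it is quoted verbatim from Perutz \cite[Theorem 6.2, Addendum 1.6]{PerGysin} and used as a black box in the worked examples of \cref{sscGysinI,sscGysinII}. So there is no ``paper's own proof'' to compare your proposal against.

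That said, your outline is a reasonable sketch of the \emph{other} approach the paper mentions in the opening paragraph of \cref{secGysin}, namely the Biran--Khanevsky Floer--Gysin sequence built from a fibred Morse function on $L$ and the pearl complex. Perutz's actual proof is rather different: he works with $L$ as a spherically fibred coisotropic and uses the machinery of Lagrangian matching conditions and fibred Dehn twists, in the spirit of Seidel's long exact sequence, rather than filtering a pearl complex by fibrewise Morse index. Your approach is more elementary and self-contained in the monotone setting; Perutz's buys compatibility with his broader framework of symplectic Morse--Bott fibrations and works in greater generality.

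One genuine gap in your sketch: the assertion that ``standard Maslov-index and $\pi_2$ arguments rule out higher $T^r$ contributions for $r\geq 2$'' is not correct as stated, since for $\dim N$ large enough there is no index obstruction to rigid pearls with several discs landing in $d_1$. What actually forces $\widehat{e}=e+\nu T$ with no higher-order terms is the $QH^*(N;\Lambda)$-linearity of $d_1$ that you invoke only at the end: once $d_1$ is quantum multiplication by $d_1(1)\in QH^{k+1}(N;\Lambda)$, degree reasons in classical cohomology kill the coefficients of $T^{\geq 2}$. You should establish the module structure at the chain level \emph{before} analysing $d_1$, not after; the two paragraphs are in the wrong order. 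The identification of the $T^0$ part with the classical Euler class and the $T^1$ part with the disc count $\nu$ then still requires the careful orientation analysis you allude to, and this is where most of the work in an honest proof would lie.
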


\begin{rmk}
\label{rmkGysin}
In \cite{PerGysin} Perutz denotes $L$ by $\widehat{V}$, and works with Hamiltonian Floer cohomology $HF^*(\mu)$ of a symplectomorphism $\mu$ of $N$.  We take $\mu = \id_N$ so that $HF^*(\mu)$ becomes $QH^*(N)$.  His argument all goes through for background classes of the form $\pr_M^*b_M + \pr_N^*b_N$, with $b_N \in H^2(N; \Z/2)$ as long as $QH^*(N; \Lambda)$, is deformed to $QH^*(N, b_N; \Lambda)$.
\end{rmk}


\subsection{Worked example I: $\mathrm{SO}(3)$}
\label{sscGysinI}

In this subsection we revisit a monotone Lagrangian studied in \cite{SmDCS}.  There we showed that it is narrow except possibly when $\Char \K = 3$ or $\Char \K = 5$, depending on the choice of relative spin structure, and that it is wide in the $\Char \K = 3$ case.  The $\Char \K = 5$ case was left unresolved, but we shall now prove wideness in both cases using the Gysin sequence.

Take $M=(\C\P^1 \times \C\P^1)^-$ and $N=\C\P^1$, with each $\C\P^1$ given the Fubini--Study form, so that $X=(\C\P^1)^3$.  Let $L$ be the zero set of the moment map for the standard $\mathrm{SO}(3)$-action by rotation on the three $\C\P^1=S^2$ factors.  This comprises ordered triples of points on $S^2$ which form the vertices of an equilateral triangle on a great circle, so is precisely the lift of the Chiang Lagrangian \cite[Section 2]{Ch} in $\C\P^3$ under the branched cover $(\C\P^1)^3 \rightarrow \Sym^3 \C\P^1 \cong \C\P^3$.  Such a triangle is determined by two of its vertices, so $\pr_M$ embeds $L$ in $M$, and the projection $\pr_N$ to the third vertex exhibits $L$ as an orientable circle bundle over $N$, so we are in the setup of the Gysin sequence.  Note that $L$ is monotone (since $X$ is monotone and $\pi_1(L)$ is torsion), has $N_L \geq 2$ (since it's orientable---it's a free $\mathrm{SO}(3)$-orbit), and carries a \emph{standard} spin structure defined by the trivialisation of its tangent bundle coming from the infinitesimal $\mathrm{SO}(3)$-action.  Equip $L$ with the trivial line bundle and an arbitrary relative spin structure to give a brane $L^\flat$.

This Lagrangian is the `$N=3$' case of the main family of examples in \cite{SmDCS} and the computations of \cite[Sections 5.2--5.3]{SmDCS} show the following.  Given a point $(x_1, x_2, x_3)$ in $L$ there is a holomorphic disc $u_1 : (D, \pd D) \rightarrow (X, L)$ defined by
\[
u_1(z) = (x_1, RM_zR^{-1}x_2, RM_zR^{-1}x_3)
\]
where $R$ is any rotation of $\C\P^1$ sending $\infty$ to $x_1$ and $M_z$ is the map $\C \rightarrow \C$ given by multiplication by $z$.  This meets the $\mathrm{SO}(3)$-invariant divisor
\[
Z_{23} = \{([z_1], [z_2], [z_3]) \in X : [z_2]=[z_3]\},
\]
which is Poincar\'e dual to the class $H_2+H_3$ ($H_j$ denotes the pullback of the hyperplane class from the $j$th $\C\P^1$ factor), and the count of this disc computes $\CO(H_2+H_3)=\pm T \cdot 1_L$.  Moreover, this sign is positive for the standard spin structure.  Similarly there are discs $u_2$ and $u_3$ where the roles of the three factors are interchanged, and these meet divisors $Z_{13}$ and $Z_{12}$ and compute $\CO(H_1+H_3)$ and $\CO(H_1+H_2)$.  Up to reparametrisation, these are the only three index $2$ holomorphic discs through $(x_1, x_2, x_3)$ (strictly they are the `axial' index $2$ discs, but by \cite[Corollary 3.10]{EL1} all holomorphic index $2$ discs are axial), and their classes $A_1$, $A_2$ and $A_3$ freely generate $H_2(X, L; \Z)$

Since relative spin structures form a torsor for $H^2(X, L; \Z/2)$, and we have a distinguished choice---namely the standard spin structure---we can label each relative spin structure by a class $\eps \in H^2(X, L; \Z/2)$.  Letting $\eps_j = (-1)^{\ip{\eps}{A_j}}$ the above results can then be written as
\begin{equation}
\label{COeqn1}
\CO(H_1+H_2+H_3-H_j) = \eps_j T \cdot 1_L.
\end{equation}

\begin{lem}[{\cite[Theorem 5.4.5]{SmDCS}}, `$N=3$']
\label{P13Narrow}
If $HF^*(L^\flat, L^\flat; \Lambda)$ is non-zero then either $\Char \K=3$ and the $\eps_j$ are all equal, or $\Char \K=5$ and the $\eps_j$ are not all equal.
\end{lem}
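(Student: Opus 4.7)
The plan is to exploit that $\CO : QH^*(X, b; \Lambda) \to HF^*(L^\flat, L^\flat; \Lambda)$ is a unital $\Lambda$-algebra map, where $X = (\C\P^1)^3$ and $b \in H^2(X; \Z/2)$ is the background class attached to the chosen relative spin structure.  Let $h_j \coloneqq \CO(H_j)$ and let $S_j \in H_2(X)$ denote the class of a line in the $j$th factor; then in $QH^*(X, b; \Lambda)$ one has $H_j \qcup H_j = \delta_j T^2$, where $\delta_j = (-1)^{\ip{b}{S_j}}$.

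First I would apply $\CO$ to \eqref{COeqn1} and use $\K$-linearity.  When $\Char \K = 2$ the three relations sum to $0 = T\cdot 1_L$, forcing $1_L = 0$ and hence $HF^* = 0$, so the lemma is vacuous.  When $\Char \K \neq 2$ the linear system inverts to give $h_j = \tfrac{1}{2}(\eps_k + \eps_l - \eps_j)\,T$ for each cyclic triple $\{j,k,l\} = \{1,2,3\}$.  Squaring, comparing with $\CO(H_j \qcup H_j) = \delta_j T^2$, and using that $T$ is invertible in $HF^*$ whenever $HF^* \neq 0$, yields the scalar identity $(\eps_k+\eps_l-\eps_j)^2 = 4\delta_j$ in $\K$.

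Next I would compute $\delta_j$ in terms of the $\eps_i$.  The standard spin structure on $L \cong \mathrm{SO}(3)$ comes from its parallelisation by left-invariant vector fields, so $w_2(L) = 0$ and it admits a relative lift with background class $b_0 = 0$.  An arbitrary relative spin structure differs from the standard one by some $\eps \in H^2(X, L; \Z/2)$ and has background class $\pi_*\eps$, with $\pi_* : H^2(X, L; \Z/2) \to H^2(X; \Z/2)$ the connecting map; hence $\delta_j = (-1)^{\ip{\eps}{i_* S_j}}$, where $i_* : H_2(X) \to H_2(X, L)$ is dual to $\pi_*$.  The main obstacle I expect is pinning down $i_*$.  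It is $S_3$-equivariant, so has the form $\alpha I + \beta(J - I)$ with $J$ the all-ones matrix; the Maslov constraint $\mu(S_j) = 4 = 2\sum_k m_{jk}$ gives $\alpha + 2\beta = 2$; and the long exact sequence of $(X, L)$ identifies the cokernel of $i_*$ with $H_1(L) = \Z/2$, so $|\det i_*| = 2\,|\alpha - \beta|^2 = 2$.  The unique integer solution is $(\alpha, \beta) = (0, 1)$, giving $i_* S_j = A_k + A_l$ and therefore $\delta_j = \eps_k \eps_l$.

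Finally I would substitute into the scalar identity to obtain $(\eps_k + \eps_l - \eps_j)^2 = 4\eps_k \eps_l$ and case-split.  If all three $\eps_i$ equal a common $\eta \in \{\pm 1\}$ the identity reduces to $1 = 4$, forcing $\Char \K = 3$.  Otherwise two of them agree and one differs; without loss of generality $\eps_2 = \eps_3 = \eta$ and $\eps_1 = -\eta$, in which case the $j = 1$ identity reads $9 = 4$ while the $j = 2, 3$ identities read $1 = -4$, all three demanding $\Char \K = 5$.  In every other combination of characteristic and sign pattern the scalar identity fails and so $HF^*$ must be zero, which is precisely the content of the lemma.
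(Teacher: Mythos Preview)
Your proof is correct and follows essentially the same line as the paper's: solve \eqref{COeqn1} for $\CO(2H_j)$, square, and compare with the quantum relation $H_j^{\qcup 2}=\delta_j T^2$ to force a numerical identity on $\Char\K$ and the $\eps_j$. The only real difference is in the identification $\delta_j=\eps_k\eps_l$: the paper gets this in one line by observing that a line in the $j$th $\C\P^1$ factor meets the $L$-avoiding divisors $Z_{jk}$ and $Z_{jl}$ once each (so that $\ip{b}{S_j}=\ip{\eps}{A_k}+\ip{\eps}{A_l}$), whereas you reconstruct the matrix of $i_*$ from $S_3$-equivariance, the Maslov constraint $\mu(S_j)=4$, and $|\mathrm{coker}\,i_*|=|H_1(L)|=2$---a sound but more roundabout route (and a minor quibble: the map $H^2(X,L;\Z/2)\to H^2(X;\Z/2)$ you invoke is the ordinary long-exact-sequence map, not a connecting homomorphism).
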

\begin{proof}
By taking linear combinations of the relations \eqref{COeqn1} we obtain
\begin{equation}
\label{COeqn2}
\CO(2H_3) = (\eps_1+\eps_2-\eps_3)T \cdot 1_L.
\end{equation}
In $QH^*(X, \eps; \Lambda)$ we have $H_3^2 = \pm T$, where the sign is determined by pairing the background class (which is the image of $\eps$ in $H^2(X; \Z/2)$) with the class of a line on the third $\C\P^1$ factor, and since this line intersects $Z_{13}$ and $Z_{23}$ once each, but not $Z_{12}$, the sign is exactly $\eps_1\eps_2$.  Squaring \eqref{COeqn2}, we thus have
\[
4\eps_1\eps_2 \cdot 1_L = (\eps_1+\eps_2-\eps_3)^2T^2 \cdot 1_L.
\]
If $HF^* \neq 0$ we must then have
\[
3 = 2(\eps_1\eps_2+\eps_2\eps_3+\eps_3\eps_1)
\]
in $\K$.  If the $\eps_j$ coincide then the right-hand side is $6$, so $\Char \K$ must be $3$; otherwise the right-hand side is $-2$ and so $\Char \K$ must be $5$.
\end{proof}

In \cite[Theorem 5.7.3]{SmDCS} we used the symmetric group action that permutes the three $\C\P^1$ factors to show that $L^\flat$ is wide when $\Char \K = 3$ and the $\eps_j$ are equal.  We can now prove the main result

\begin{thm}
\label{P13Wide}
$L^\flat$ is wide in the cases allowed by \cref{P13Narrow}, i.e.~when $\Char \K = 3$ and the $\eps_j$ are all equal or when $\Char \K = 5$ and the $\eps_j$ are not.
\end{thm}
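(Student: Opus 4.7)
The plan is to apply Perutz's Gysin triangle (\cref{thmGysin}) to the projection $\pr_N \colon L \to N = \C\P^1$ onto the third factor, which realises $L \cong \mathrm{SO}(3)$ as an oriented $S^1$-bundle of Euler number $2$ over $N$. The hypothesis $N_L = 2 = k+1$ for $k = 1$ is met, and since $H^1(\C\P^1) = 0$ the Künneth decomposition $H^2(X; \Z/2) = H^2(M; \Z/2) \oplus H^2(N; \Z/2)$ guarantees every background class factors as $\pr_M^* b_M + \pr_N^* b_N$, so \cref{rmkGysin} permits arbitrary relative spin structures. The triangle identifies $HF^*(L^\flat, L^\flat; \Lambda)$ with the mapping cone of quantum multiplication by $\widehat{e} = 2H + \nu T \in QH^2(N, b_N; \Lambda)$.

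In the $\Lambda$-basis $\{1, H\}$ of $QH^*(\C\P^1, b_N; \Lambda) \cong \Lambda[H]/(H^2 - \epsilon_N T^2)$, the map $\widehat{e} \qcup$ has matrix $\begin{pmatrix} \nu T & 2\epsilon_N T^2 \\ 2 & \nu T \end{pmatrix}$ with determinant $(\nu^2 - 4\epsilon_N)T^2$. When this determinant is a unit in $\Lambda$ the map is an isomorphism and $HF^* = 0$; otherwise $\widehat{e} \qcup$ has rank one in each graded piece and the long exact sequence produces $\dim_\K HF^d = 1$ for all $d$, matching $H^*(\R\P^3; \K) \otimes \Lambda$ and hence wideness. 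The scalar $\epsilon_N$ equals $\eps_1 \eps_2$: this is essentially the computation already used in the proof of \cref{P13Narrow}, combining the fact that a line on the third $\C\P^1$ meets $Z_{13}$ and $Z_{23}$ once each but misses $Z_{12}$ with the relative spin constraint $b|_L = w_2(L) = 0$, which forces $\delta_1 + \delta_2 + \delta_3 \equiv 0 \pmod 2$ when $b = \sum \delta_i H_i$.

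To pin down $\nu$, exactness of the triangle gives $\CO(\pr_N^* \widehat{e}) = 0$ in $HF^*$, and substituting \eqref{COeqn2} yields the identity $(\nu + \eps_1 + \eps_2 - \eps_3) T \cdot 1_L = 0$ in $HF^2$. Combined with the definition of $\nu$ as a signed disc count, I would extract $\nu = \eps_3 - \eps_1 - \eps_2$ in $\K$. Geometrically $\nu = \sum_j \eps_j \iota_j$, where $\iota_j$ is the signed intersection of the boundary loop $\gamma_j$ of $u_j$ with a chosen global angular chain $\beta$: for $u_3$ we have $\gamma_3 \subset \pr_N^{-1}(x_3)$ winding once, yielding $\iota_3 = +1$, and by the $\Z/2$-symmetry swapping the first two factors $\iota_1 = \iota_2$, with a direct computation (using a section of $\pr_N$ over $N$ minus a point, whose boundary has multiplicity $2 = e$) pinning the common value at $-1$.

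Substituting gives
\[
\nu^2 - 4\epsilon_N = (\eps_3 - \eps_1 - \eps_2)^2 - 4\eps_1 \eps_2 = 3 - 2(\eps_1 \eps_2 + \eps_2 \eps_3 + \eps_3 \eps_1),
\]
which vanishes in $\K$ precisely in the cases allowed by \cref{P13Narrow}, and the rank-one analysis then yields wideness. The main obstacle I anticipate is the sign determination for $\iota_1 = \iota_2 = -1$, which requires careful tracking of orientations on $L$, on the circle fibres of $\pr_N$, and on the boundary loops in tandem with the relative spin structure signs. As a fallback, one can fix the linear function $\nu = \nu(\eps_1, \eps_2, \eps_3)$ (whose coefficients are symmetric under swapping factors $1, 2$) by matching against the $\Char \K = 3$ wide case established in \cite[Theorem 5.7.3]{SmDCS} together with enough of the known narrow cases from \cref{P13Narrow} to force $\nu = \eps_3 - \eps_1 - \eps_2$.
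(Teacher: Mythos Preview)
Your overall architecture matches the paper's: apply \cref{thmGysin} to the circle bundle $\pr_N : L \to \C\P^1$, write the top map as the $2\times 2$ matrix with determinant $(\nu^2 - 4\eps_1\eps_2)T^2$, and conclude wideness from its singularity. The difference is entirely in how you handle $\nu$.

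The paper \emph{never computes $\nu$}. Its observation is that for any integer $\nu$ and any $\eps_1,\eps_2\in\{\pm 1\}$, the integer $\nu^2-4\eps_1\eps_2$ is never $\pm 1$ (it is either $\nu^2-4$ or $\nu^2+4$, neither of which hits a unit). Hence some prime $p$ divides it, so in characteristic $p$ the determinant vanishes and $HF^*\neq 0$. But \cref{P13Narrow} says the only primes for which non-vanishing is possible are $3$ (when the $\eps_j$ are all equal) and $5$ (when they are not). This pins down $p$ without knowing $\nu$. Non-narrow then forces wide because $\dim_\K H^*(\R\P^3;\K)=2$ leaves no room for a partial collapse of the Oh spectral sequence.

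Your route instead tries to determine $\nu$ explicitly. The argument via $\CO(\pr_N^*\widehat{e})=0$ and \eqref{COeqn2} is circular as stated: the relation $(\nu+\eps_1+\eps_2-\eps_3)T\cdot 1_L=0$ carries information only once you know $1_L\neq 0$, which is what you are trying to prove. (The paper makes exactly this deduction, but \emph{after} establishing wideness, in the remark following the proof.) Your direct geometric computation of the intersection numbers $\iota_j$ with a global angular chain is a legitimate alternative, but as you acknowledge it requires delicate orientation bookkeeping that you do not complete. Your fallback---pin down the linear form $\nu(\eps_1,\eps_2,\eps_3)$ by matching against the known $\Char\K=3$ wideness from \cite[Theorem 5.7.3]{SmDCS} together with the narrowness constraints---does work (it forces $(\iota_1,\iota_2,\iota_3)=\pm(-1,-1,1)$, and the sign is irrelevant since only $\nu^2$ enters), but it reintroduces dependence on an external result that the Gysin argument was meant to supersede. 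The paper's integrality trick is both shorter and self-contained; it is worth internalising.
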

\begin{proof}
We shall apply \cref{thmGysin}.  The map $(b_M, b_N) \mapsto \pr_M^*b_M+\pr_N^*b_N$ is an isomorphism $H^2(M; \Z/2) \oplus H^2(N; \Z/2) \rightarrow H^2(X; \Z/2)$, so \cref{rmkGysin} allows us to take any relative spin structure on $L$, and the computation in the proof of \cref{P13Narrow} shows that $(-1)^{\ip{b_N}{[\C\P^1]}}=\eps_1\eps_2$, so the exact triangle we obtain is
\begin{equation*}
\begin{tikzcd}
\Lambda[H_3, T^{\pm 1}]/(H_3^2-\eps_1\eps_2T^2) \arrow{rr}{\widehat{e}\qcup} & & \Lambda[H_3, T^{\pm 1}]/(H_3^2-\eps_1\eps_2T^2) \arrow{dl}{}
\\ & HF^*(L^\flat, L^\flat; \Lambda) \arrow{ul}{[1]}. &
\end{tikzcd}
\end{equation*}
The class $\widehat{e}$ is the sum of the Euler class $2H_3$ with $\nu T$, where $\nu$ counts holomorphic index $2$ discs through a generic point $x$ of $L$, each weighted by the intersection of its boundary with a global angular chain.

One can explicitly construct a global angular chain and compute the value of $\nu$, but in fact we can use \cref{P13Narrow} to save us the trouble.  With respect to the basis $1, H_3$ of $QH^*(\C\P^1, b_N; \Lambda)$ as a $\Lambda$-module, the map $\widehat{e}\qcup$ has matrix
\[
\begin{pmatrix} \nu T & 2\eps_1\eps_2 T^2 \\ 2 & \nu T \end{pmatrix},
\]
with determinant $(\nu^2-4\eps_1\eps_2)T^2$.  In particular, $HF^* \neq 0$ if and only if $\nu^2-4\eps_1\eps_2$ vanishes in $\K$.  For all integers $\nu$ and for all $\eps_j \in \{\pm 1\}$, the quantity $\nu^2-4\eps_1\eps_2$ is never $\pm 1$, so we see that there is always some characteristic in which $HF^*$ non-zero.  By \cref{P13Narrow} we then conclude that $L^\flat$ is non-narrow in characteristic $3$ when the $\eps_j$ are all equal and in characteristic $5$ when they are not.  In each case, $H^*(L; \K)$ has rank $2$ so $L^\flat$ is automatically wide (there is only one potentially non-zero differential in the Oh spectral sequence and non-narrowness means this differential is zero).
\end{proof}

\begin{rmk}
In the wide cases we know that $HF^*$ is the cone on multiplication by $2H_3+\nu T$ so $2H_3$ acts as $-\nu T$.  Hence $\CO(2H_3)=-\nu T \cdot 1_L$, so by \eqref{COeqn2} we get that $\nu = \eps_3-\eps_1-\eps_2$ in $\K$.  This agrees with the explicit calculation of $\nu$ over $\Z$, by counting discs meeting the global angular chain.
\end{rmk}


\subsection{Worked example II: $L(4,1)$}
\label{sscGysinII}

We now consider the following closely-related example.  Take $M=(\C\P^2)^-$ and $N=\C\P^1$, each equipped with an appropriate multiple of the Fubini--Study form so that the product $X=M^-\times N$ is monotone.  Take the Hamiltonian $\mathrm{SO}(3)$-action on $X = (\Sym^2 \C\P^1) \times \C\P^1$ which rotates the $\C\P^1$'s, and let $L$ be the zero set of the moment map.  This is an $\mathrm{SO}(3)$-orbit comprising triples $(x_1, x_2, x_3)$ of points on the sphere, with $x_1$ and $x_2$ unordered, which form the vertices of an isosceles triangle with apex at $x_3$ of a specific angle.  The stabiliser of such a configuration is the group of order $2$ generated by the rotation through angle $\pi$ about $x_3$, so the orbit is diffeomorphic to the lens space $L(4, 1)$.  As before, it is monotone, orientable (hence has $N_L \geq 2$), and carries a standard spin structure.  Equipping $L$ with the trivial flat line bundle and an arbitrary relative spin structure to give a brane $L^\flat$, our goal is to compute the characteristics in which $L^\flat$ is wide.

\begin{rmk}
We can explicitly calculate the moment map and apex angle following the conventions of \cite[Sections 3.1--3.2]{Sm} but with the symplectic form on $\C\P^n$ scaled by $n+1$; this ensures that a complex line has area $(n+1)\pi$ and Chern number $n+1$, which gives monotonicity.  In detail, we take $x$, $y$ as the basis for the standard representation of $\SU(2)$ and consider the bases
\[
x^n, x^{n-1}y, x^{n-2}y^2, \dots, y^n \quad \text{and} \quad x^n, \sqrt{\binom{n}{1}}x^{n-1}y, \sqrt{\binom{n}{2}}x^{n-2}y^2, \dots, y^n
\]
for its $n$th symmetric power.  We call the corresponding homogeneous coordinates on $\C\P^n$ \emph{standard} and \emph{unitary} coordinates respectively.  If $z$ is the vector of unitary coordinates on $\C\P^n$ then the moment map $\mu$ for the $\SU(2)$-action is
\[
\ip{\mu([z])}{\xi} = \frac{(n+1)i}{2} \frac{z^\dag \phi(\xi) z}{z^\dag z}
\]
for all $\xi$ in $\mathfrak{su}(2)$, where $\phi(\xi)$ is the matrix for the the infinitesimal action in unitary coordinates.

In our case we take $w=(w_0, w_1, w_2)$ and $z=(z_0, z_1)$ as unitary coordinates on $\C\P^2$ and $\C\P^1$ respectively, and see that the moment map satisfies
\[
\ip{\mu([w], [z])}{\lb\begin{smallmatrix}i & 0 \\ 0 & -i\end{smallmatrix}\rb} = 3 \frac{|w_2|^2-|w_0|^2}{\lVert w \rVert^2} + \frac{|z_1|^2-|z_0|^2}{\lVert z \rVert^2}.
\]
The isosceles triangle with apex $0$ and `base' vertices $\pm \lambda$ corresponds to $w=(-\lambda^2, 0, 1)$ and $z=(0, 1)$, and if this lies in $\mu^{-1}(0)$ then
\[
3\frac{1-\lambda^4}{1+\lambda^4} + 1 = 0.
\]
This yields $\lambda^4 = 1/2$, and hence the apex angle is $\arccos (3-2\sqrt{2}) \approx 80^\circ$ using \cite[Equation (9)]{Sm}.
\end{rmk}

Again there are three holomorphic index $2$ discs through each point of $L$.  The analogues of $u_1$ and $u_2$ meet the $\mathrm{SO}(3)$-invariant divisor comprising triples of points on $\C\P^1$, the first two unordered, such that (at least) one of the first two points coincides with the third; this is given by
\[
\{([ax^2+bxy+cy^2], [dx+ey]) : ae^2+be(-d)+c(-d)^2=0\}
\]
so its Poincar\'e dual is $H_1+2H_3$, where $H_1$ and $H_3$ are the hyperplane classes on $\C\P^2$ and $\C\P^1$ respectively.  The analogue of $u_3$ meets the invariant divisor comprising triples of points where the unordered pair coincide, given by
\[
\{([ax^2+bxy+cy^2], [dx+ey]) : b^2-4ac=0\},
\]
Poincar\'e dual to $2H_1$.  Let $A_1$, $A_2$ and $A_3$ denote the homology classes of these discs.

\begin{lem}
$A_1$ and $A_3$ form a basis for $H_2(X, L; \Z)$.
\end{lem}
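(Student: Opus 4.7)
The plan is to show that $A_1$ and $A_3$ are dual to $\{H_1 + 2H_3,\, 2H_1\}$ under the perfect pairing $H^2(X, L; \Z) \times H_2(X, L; \Z) \to \Z$. The intersection counts already recorded for $u_1$ and $u_3$ will make the resulting pairing matrix the $2 \times 2$ identity, from which the basis claim is immediate.

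First I will set up the relevant homological input. Since $L \cong L(4,1)$ we have $H_1(L; \Z) = \Z/4$, $H^1(L; \Z) = 0$ and $H^2(L; \Z) = \Z/4$; the long exact sequence of the pair $(X, L)$ then gives $0 \to H_2(X; \Z) \to H_2(X, L; \Z) \to \Z/4 \to 0$ and an inclusion $H^2(X, L; \Z) \hookrightarrow H^2(X; \Z) = \Z\langle H_1, H_3\rangle$ as the kernel of the restriction map. Universal coefficients (using $H_1(X, L; \Z) = 0$, which follows from $X$ and $L$ both being connected with $H_1(X) = 0$) then shows that $H_2(X, L; \Z)$ is torsion-free of rank $2$ and that the pairing above is perfect.

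Next I will verify that $H_1 + 2H_3$ and $2H_1$ form a $\Z$-basis of $H^2(X, L; \Z)$. Both lie in the kernel of restriction because the corresponding $\mathrm{SO}(3)$-invariant divisors miss $L$: at the base point $([-\lambda^2, 0, 1], [0, 1])$ described in the earlier remark one directly computes $ae^2 - bde + cd^2 = -\lambda^2 \neq 0$ and $b^2 - 4ac = 4\lambda^2 \neq 0$, and $L$ is the $\mathrm{SO}(3)$-orbit of this point. The two classes span an index-$4$ sublattice of $H^2(X; \Z)$ since $\det\begin{pmatrix} 1 & 2 \\ 2 & 0 \end{pmatrix} = -4$, and this matches $[H^2(X; \Z) : H^2(X, L; \Z)]$ provided the restriction $H^2(X; \Z) \to H^2(L; \Z) = \Z/4$ is surjective. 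This surjectivity is the one delicate point and I would derive it by observing that projection to the $\C\P^1$ factor realises $L$ as $\mathrm{SU}(2)/\Z_4 \to \mathrm{SU}(2)/\U(1) = \C\P^1$, an $S^1$-bundle of Euler class $\pm 4$, so that $H_3|_L$ generates $H^2(L; \Z) = \Z/4$ by the Serre spectral sequence.

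Finally, the intersection numbers recorded in the text give the pairing matrix
\[
\begin{pmatrix} \langle H_1 + 2H_3, A_1 \rangle & \langle H_1 + 2H_3, A_3 \rangle \\ \langle 2H_1, A_1 \rangle & \langle 2H_1, A_3 \rangle \end{pmatrix} = \begin{pmatrix} 1 & 0 \\ 0 & 1 \end{pmatrix},
\]
since $u_1$ meets the first divisor once and avoids the second while $u_3$ does the opposite. By perfect duality $\{A_1, A_3\}$ must be the dual basis to $\{H_1 + 2H_3, 2H_1\}$, hence a $\Z$-basis of $H_2(X, L; \Z)$, as required.
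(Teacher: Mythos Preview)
Your approach is essentially the same as the paper's, just phrased cohomologically: the paper defines $\theta: H_2(X,L;\Z) \to \Z^2$ by intersecting with the two invariant divisors, which is precisely your pairing map against $H_1+2H_3$ and $2H_1$. The paper then shows $\theta$ is injective by an index argument, which amounts to your claim that the pairing is perfect.

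There is, however, a genuine gap in your justification of perfectness. You assert that universal coefficients together with $H_1(X,L;\Z)=0$ shows $H_2(X,L;\Z)$ is torsion-free. This is not correct: the vanishing of $H_1(X,L;\Z)$ only gives $H^2(X,L;\Z)\cong\Hom(H_2(X,L;\Z),\Z)$, which says nothing about torsion in $H_2(X,L;\Z)$ itself. Without torsion-freeness your final step fails, since the evaluation map $H_2(X,L;\Z)\to\Hom(H^2(X,L;\Z),\Z)$ only identifies $A_1,A_3$ as a basis of the free quotient.

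The gap is easily repaired with ingredients you already have. Your surjectivity of $H^2(X;\Z)\to H^2(L;\Z)$ gives $H^3(X,L;\Z)=0$ via the long exact sequence, and since $H_3(X,L;\Z)=0$ (from $H_3(X;\Z)=0$ and $H_2(L;\Z)=0$), universal coefficients in degree $3$ yields $\mathrm{Ext}(H_2(X,L;\Z),\Z)=0$, i.e.\ torsion-freeness. Alternatively, and this is what the paper does, you can show the pairing map is injective directly: restricted to $H_2(X;\Z)$ it has matrix $\lb\begin{smallmatrix}1&2\\2&0\end{smallmatrix}\rb$ (lines on the two factors paired against your two classes), hence is injective with index-$4$ image; since $H_2(X;\Z)$ has index $4$ in $H_2(X,L;\Z)$ and the map is already surjective, it must be an isomorphism.
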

\begin{proof}
The long exact sequence of the pair gives a short exact sequence
\[
0 \rightarrow H_2(X; \Z) \cong \Z^2 \rightarrow H_2(X, L; \Z) \rightarrow H_1(L; \Z) \cong \Z/4 \rightarrow 0,
\]
whilst intersecting with the two divisors above gives a map $\theta : H_2(X, L; \Z) \rightarrow \Z^2$.  The latter sends $A_1$ and $A_3$ to $(1, 0)$ and $(0, 1)$ respectively, so it suffices to show it's injective.  Since $H_2(X; \Z)$ has index $4$ as a subgroup of $H_2(X, L; \Z)$, we have that $\theta$ is injective if and only if $\theta|_{H_2(X; \Z)}$ is injective and $\theta(H_2(X; \Z))$ has index $4$ in $\theta(H_2(X, L; \Z))$, and this is what we shall prove.

To show these two properties, note that lines on the $\C\P^2$ and $\C\P^1$ factors form a basis for $H_2(X, \Z)$, and are sent by $\theta$ to $(1, 2)$ and $(2, 0)$ respectively.  Thus $\theta|_{H_2(X; \Z)}$ is injective and has cokernel $\Z/4$.  Since $\theta$ itself is surjective we see that $\theta(H_2(X; \Z))$ has index $4$ in $\theta(H_2(X, L; \Z))$, so we're done.
\end{proof}

As before, we introduce signs $\eps_1$ and $\eps_3$ to parametrise the relative spin structure, and now the three discs compute (by \cite[Theorem 3.5.3]{SmDCS}) that
\[
\CO(H_1+2H_3)=2\eps_1T\cdot 1_L \quad \text{and} \quad \CO(2H_1)=\eps_3T\cdot 1_L.
\]
The relations in quantum cohomology, meanwhile, become $H_1^3=\eps_1 T^3$ and $H_3^2 = T^2$.

\begin{lem}
\label{P2P1Narrow}
$L^\flat$ is narrow unless: $\Char \K = 7$ and $\eps_1=\eps_3$; or $\Char \K = 3$ and $\eps_1 = -\eps_3$.
\end{lem}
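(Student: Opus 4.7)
The plan is to mirror the approach used in \cref{P13Narrow}: combine the given $\CO$ evaluations with the quantum cohomology relations $H_1^3=\eps_1 T^3$ and $H_3^2=T^2$ to extract a scalar identity in $\K$ that constrains both the characteristic and the pair $(\eps_1,\eps_3)$. Throughout the argument, the key leverage is that $T$ is invertible in $\Lambda$, so any relation of the form $cT^k\cdot 1_L=0$ in $HF^*$ forces either $c=0$ in $\K$ or $L^\flat$ to be narrow.

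First I would dispose of the characteristic $2$ case immediately: the relation $\CO(2H_1)=\eps_3 T\cdot 1_L$ degenerates to $0=T\cdot 1_L$, and invertibility of $T$ then forces $1_L$ to vanish in $HF^*$, so $L^\flat$ is narrow. This is consistent with the statement since neither exceptional case has $\Char\K=2$.

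Assume from here that $\Char\K\neq 2$. The two linear relations
\[
\CO(2H_1)=\eps_3 T\cdot 1_L \quad\text{and}\quad \CO(H_1+2H_3)=2\eps_1 T\cdot 1_L
\]
can be solved to give $\CO(H_1)=(\eps_3/2)T\cdot 1_L$ (solving also for $\CO(H_3)$ is possible but not needed). Since $\CO$ is a ring homomorphism, applying it to the relation $H_1^3=\eps_1 T^3$ yields
\[
(\eps_3/2)^3 T^3\cdot 1_L \;=\; \eps_1 T^3\cdot 1_L \quad\text{in } HF^*.
\]
Using $\eps_3^3=\eps_3$ and assuming $HF^*\neq 0$, this collapses to the numerical identity $\eps_3=8\eps_1$ in $\K$. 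In the case $\eps_1=\eps_3$ this forces $7\eps_1=0$, hence $\Char\K=7$; in the case $\eps_1=-\eps_3$ it forces $9\eps_1=0$, hence $\Char\K=3$, giving exactly the stated dichotomy.

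The main obstacle is really just careful bookkeeping of the signs $\eps_1,\eps_3$ and choosing which quantum relation to exploit; the $H_1^3=\eps_1 T^3$ route is the cleanest, while solving for $\CO(H_3)$ and applying $H_3^2=T^2$ gives the equivalent constraint $8\eps_1\eps_3=1$, which could be used as a cross-check.
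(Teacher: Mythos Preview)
Your proof is correct and follows essentially the same route as the paper: both exploit the relation $H_1^{\qcup 3}=\eps_1 T^3$ together with $\CO(2H_1)=\eps_3 T\cdot 1_L$ to obtain the constraint $8\eps_1=\eps_3$ in $\K$. The only difference is cosmetic: the paper cubes $\CO(2H_1)=\eps_3 T\cdot 1_L$ directly to get $8\eps_1 T^3\cdot 1_L=\eps_3 T^3\cdot 1_L$, which handles all characteristics uniformly and avoids your separate $\Char\K=2$ case split.
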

\begin{proof}
Cubing the equality $\CO(2H_1)=\eps_3T\cdot 1_L$ gives $8\eps_1T \cdot 1_L = \eps_3T \cdot 1_L$, so if $HF^* \neq 0$ then $8\eps_1$ must be equal to $\eps_3$ in $\K$.
\end{proof}

Since we are in the setting of the Gysin sequence we can use it to prove

\begin{thm}
\label{P2P1Wide}
$L^\flat$ is wide when $\Char \K = 7$ and $\eps_1=\eps_3$ or when $\Char \K = 3$ and $\eps_1 = -\eps_3$.
\end{thm}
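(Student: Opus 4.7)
The plan is to mirror the proof of \cref{P13Wide} by applying Perutz's symplectic Gysin sequence (\cref{thmGysin}). The projection $\pr_N : L \to \C\P^1$ sending an isosceles triple to its apex exhibits $L(4,1)$ as an oriented $S^1$-bundle over $\C\P^1$, and since $H_1(L(4,1); \Z) = \Z/4$ this bundle has Euler class $e = \pm 4H_3$; the sign is immaterial for what follows, since it only affects the sign of $\nu$ below. From the relation $H_3^2 = T^2$ recorded without sign in the excerpt, the background class on $L$ pulls back from $M$, so by \cref{rmkGysin} we may take $b_N = 0$ and \cref{thmGysin} applies directly (with $k+1 = N_L = 2$, so $k=1$, matching the circle fibre).

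Writing $\widehat{e} = 4H_3 + \nu T \in QH^*(\C\P^1; \Lambda)$ for some integer $\nu$ depending on the relative spin structure, the matrix of multiplication by $\widehat{e}$ on $QH^*(\C\P^1; \Lambda)$ with respect to the basis $\{1, H_3\}$ is
\[
\begin{pmatrix} \nu T & 4T^2 \\ 4 & \nu T \end{pmatrix},
\]
whose determinant is $(\nu^2-16)T^2$. Hence $HF^*(L^\flat, L^\flat; \Lambda)$ is non-zero precisely when $\nu^2 - 16$ vanishes in $\K$.

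As in the $\mathrm{SO}(3)$ argument, the integer $\nu^2 - 16$ can never equal $0$ or $\pm 1$: the former would force $HF^*$ to be non-zero in every characteristic, contradicting \cref{P2P1Narrow} in, say, characteristic zero, whilst the latter would require $\nu^2 \in \{15, 17\}$, which has no integer solution. It therefore admits a prime divisor $p$, and in characteristic $p$ the Lagrangian is non-narrow. By \cref{P2P1Narrow}, this prime must equal $7$ when $\eps_1 = \eps_3$ and $3$ when $\eps_1 = -\eps_3$, giving non-narrowness in exactly the two claimed cases.

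To upgrade non-narrowness to wideness, observe that since $\Char \K \in \{3, 7\}$ is coprime to $4 = |H_1(L(4,1); \Z)|$, the cohomology $H^*(L(4,1); \K)$ has dimension $1$ in degrees $0$ and $3$ and vanishes elsewhere. With $N_L = 2$ the only potentially non-zero Oh spectral sequence differential between these surviving groups is $d_2 : E_2^3 \to E_2^0$; were it non-zero, both groups would be killed, forcing $HF^* = 0$ and contradicting the non-narrowness just established. Hence the spectral sequence collapses and $L^\flat$ is wide. The main technical step is identifying the Euler number as $\pm 4$ and verifying that the background class decomposes as $b_M + 0$ for every choice of relative spin structure; both should be transparent from the geometry of the isosceles configurations together with the fact that the stated relation $H_3^2 = T^2$ holds uniformly in $\eps_1, \eps_3$.
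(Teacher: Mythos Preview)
Your proof is correct and follows essentially the same route as the paper's: apply Perutz's Gysin triangle with $\widehat{e}=4H_3+\nu T$, observe that $\nu^2-16$ is never $\pm 1$ so some prime divides it, invoke \cref{P2P1Narrow} to identify that prime as $7$ or $3$ according to the sign relation, and then upgrade non-narrowness to wideness via the rank-$2$ cohomology. Your write-up is in fact a little more explicit than the paper's---you justify the Euler number $\pm 4$ from $H_1(L(4,1))\cong\Z/4$, you separately rule out $\nu^2-16=0$ by appealing to \cref{P2P1Narrow} in characteristic $0$, and you spell out the Oh spectral sequence collapse---but none of this changes the underlying argument.
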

\begin{proof}
We argue as in \cref{P13Wide}.  The exact triangle is now
\begin{equation*}
\begin{tikzcd}
\Lambda[H_3, T^{\pm 1}]/(H_3^2-T^2) \arrow{rr}{\widehat{e}\qcup} & & \Lambda[H_3, T^{\pm 1}]/(H_3^2-T^2) \arrow{dl}{}
\\ & HF^*(L^\flat, L^\flat; \Lambda) \arrow{ul}{[1]} &
\end{tikzcd}
\end{equation*}
with $\widehat{e} = 4H_3+\nu T$, and the determinant of the map $\widehat{e}\qcup$ is $(\nu^2-16)T^2$.  Since $\nu^2-16$ is never $\pm 1$, as before there is always some characteristic in which $HF^*$ non-zero.  By \cref{P2P1Narrow} we deduce that $L^\flat$ is non-narrow in characteristic $7$ when $\eps_1=\eps_3$ and in characteristic $3$ when $\eps_1=-\eps_3$.  Again, in each case $H^*(L; \K)$ has rank $2$ so $L^\flat$ is automatically wide.
\end{proof}


\section{Quilt theory and the Chekanov tori}
\label{secQuilts}


\subsection{Lagrangian correspondences}
\label{sscQuiltSummary}

We now turn to the \emph{quilt theory} of Wehrheim and Woodward, set out in \cite{QuiltedHF} and subsequent papers by the same authors and by Ma'u-Wehrheim--Woodward (this theory is actually also the basis of Perutz's Gysin sequence).  We shall use their composition theorem to relate the Lagrangians studied in \cref{sscGysinI,sscGysinII} to the Chekanov tori in $\C\P^1\times\C\P^1$ and $\C\P^2$ respectively.

Recall that given symplectic manifolds $(X_j, \omega_j)$, a Lagrangian correspondence from $X_{j-1}$ to $X_j$ is a Lagrangian submanifold $L_{(j-1)j}$ of $X_{j-1}^- \times X_j$, where as above $X_{j-1}^-$ is shorthand for $(X_{j-1}, -\omega_{j-1})$.  These generalise both ordinary Lagrangians in $X_j$, when $X_{j-1}$ is a point, and symplectomorphisms from $X_{j-1}$ to $X_j=X_{j-1}$, when $L_{(j-1)j}$ is the graph.  The \emph{composition} of correspondences $L_{(j-1)j}$ and $L_{j(j+1)}$, written $L_{(j-1)j} \circ L_{j(j+1)}$, is the subset
\begin{equation}
\label{eqCorrComp}
\pi_{(j-1)(j+1)} \big( ( L_{(j-1)j} \times L_{j(j+1)} ) \cap ( X_{j-1}^- \times \Delta_{X_j} \times X_{j+1} ) \big) \subset X_{j-1}^- \times X_{j+1},
\end{equation}
where $\pi_{(j-1)(j+1)}$ is the projection
\[
X_{j-1}^- \times X_j \times X_j^- \times X_{j+1} \rightarrow X_{j-1}^- \times X_{j+1}
\]
and $\Delta_{X_j}$ is the diagonal in $X_j^- \times X_j$.  The correspondence is said to be embedded if the intersection in \eqref{eqCorrComp} is transverse and the restriction of $\pi_{(j-1)(j+1)}$ to this intersection is an embedding, in which case it is a Lagrangian correspondence from $X_{j-1}$ to $X_{j+1}$.

Under appropriate hypotheses, Wehrheim--Woodward define a `quilted' Floer cohomology for cycles of Lagrangian correspondences $X_0$ to $X_1$ to $\dots$ to $X_{r+1} = X_0$, and prove that it is invariant under replacing consecutive correspondences by their composition when it is embedded.  Moreover, when $r=1$ and $X_0$ is a point, so the cycle of correspondences is just a pair of Lagrangians in $X_1$, their theory reproduces the ordinary Lagrangian intersection Floer cohomology of the two Lagrangians.  For us the important result is:

\begin{thm}[{\cite[Theorem 6.3.1]{QuiltedHF}}]\label{CorrNonZero}
Suppose we have a Lagrangian correspondence $L_{01}$ from $X_0$ to $X_1$ and a Lagrangian $L_1$ in $X_1$ such that the composition $L_0 \coloneqq L_{01} \circ L_1$ is embedded.  Assume moreover that all of these manifolds are closed, oriented and monotone, with the same monotonicity constant, and that $\pi_1(X_0 \times X_1)$ is torsion.  If $HF^*(L_0, L_0) \neq 0$ then $HF^*(L_{01}, L_{01}) \neq 0$.
\end{thm}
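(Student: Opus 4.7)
The plan is to reduce the claim to Wehrheim--Woodward's geometric composition theorem combined with a categorical non-vanishing argument.

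First, view $L_1$ as a Lagrangian correspondence from $X_1$ to the one-point symplectic manifold, so that $(L_{01}, L_1)$ becomes a \emph{generalised Lagrangian} from $X_0$ to the point in the framework of \cite{QuiltedHF}. One can attach to it a quilted self-Floer cohomology $HF^*((L_{01}, L_1), (L_{01}, L_1))$ computed from holomorphic quilts with seams labelled by $L_{01}$ and $L_1$. Since $L_0 = L_{01} \circ L_1$ is embedded and all three of $L_{01}, L_1, L_0$ are monotone with the common constant, a strip-shrinking argument identifies the two sides of a geometric composition isomorphism
\[
HF^*((L_{01}, L_1), (L_{01}, L_1)) \;\cong\; HF^*(L_0, L_0).
\]

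Second, I would argue by contrapositive. If $HF^*(L_{01}, L_{01}) = 0$ then the Floer unit $1_{L_{01}}$ vanishes, so $L_{01}$ represents a zero object in the split-closed derived quilted Fukaya category of $X_0^- \times X_1$. By the functoriality of quilted Floer theory (the $2$-functor of Ma'u--Wehrheim--Woodward), any quilted Floer cohomology group that uses $L_{01}$ as one of its seam labels also vanishes. In particular, the left-hand side of the displayed equation is zero, and via the geometric composition isomorphism we obtain $HF^*(L_0, L_0) = 0$, contradicting the hypothesis.

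The main obstacle will be proving the geometric composition isomorphism itself, which is the technical heart of \cite{QuiltedHF}. It requires a strip-shrinking degeneration in which the strip corresponding to the middle symplectic manifold $X_1$ collapses to a seam, together with compactness and transversality statements for quilted holomorphic maps. The common monotonicity constant guarantees uniform energy bounds (so that the analytical compactness goes through), and the torsion assumption on $\pi_1(X_0 \times X_1)$ rules out the sphere and disc bubbling phenomena that would otherwise obstruct both the definition of the quilted Floer groups and the convergence of the strip-shrinking family. Once the geometric composition isomorphism is in place, the unit-preservation step is formal, and \cref{CorrNonZero} follows immediately.
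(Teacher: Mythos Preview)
Your proposal is essentially correct and follows the same strategy that the paper sketches (the result itself is only cited, but the subsequent remark indicates the intended argument). Both routes rest on Wehrheim--Woodward's geometric composition theorem to rewrite $HF^*(L_0,L_0)$ as a Floer group in which $L_{01}$ appears explicitly, and then use a unit/module argument to deduce vanishing from $HF^*(L_{01},L_{01})=0$.

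The one noteworthy difference is in how the intermediate Floer group is packaged. You pass through the quilted self-Floer cohomology $HF^*\big((L_{01},L_1),(L_{01},L_1)\big)$ and then appeal to the Ma'u--Wehrheim--Woodward $2$-functor to propagate the vanishing. The paper's remark instead folds the quilt one step further to obtain an \emph{ordinary} Lagrangian Floer group
\[
HF^*(L_0,L_0)\;\cong\;HF^*\big(L_{01}^{\mathrm{sh}},\,L_0\times L_1\big)
\]
inside $X_0^-\times X_1$, where $L_{01}^{\mathrm{sh}}$ is $L_{01}$ with background class shifted by $w_2(X_0\times X_1)$. This buys a more elementary endgame: once you are in the ordinary Fukaya category of $X_0^-\times X_1$, the implication ``$HF^*(L_{01},L_{01})=0 \Rightarrow HF^*(L_{01}^{\mathrm{sh}},K)=0$ for all $K$'' is just the usual unital-module argument, with no need to invoke the full $A_\infty$ $2$-functor machinery. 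Your version is not wrong, but it is heavier than necessary; the module structure of quilted Floer cohomology over $HF^*(L_{01},L_{01})$ (via relative quilt invariants) would already suffice in place of the $2$-functor.
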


We have been deliberately vague about the coefficients here: as stated the result only applies in characteristic $2$, and to move outside this setting we need the orientations constructed in \cite{QuiltOr}.  First we fix relative spin structures on $L_{01}$ and $L_1$ with background classes $b_0+w_2(X_0)+b_1$ and $b_1+w_2(X_1)$ for some $b_j \in H^2(X_j; \Z/2)$.  This induces a relative spin structure on $L_0$ with background class $b_0$ and it is with respect to these relative spin structures that \cref{CorrNonZero} holds.  Moreover, for these relative spin structures we have \cite[Equation (24)]{QuiltedHF}
\begin{equation}
\label{eqInd2Sum}
w(L_0)+w(L_{01})+w(L_1)=0,
\end{equation}
where $w$ denotes the signed count of index $2$ discs through a generic point of $L$.

\begin{rmk}
The proof of \cref{CorrNonZero} shows that $HF^*(L_0, L_0)$ is isomorphic to $HF^*(L_{01}^\mathrm{sh}, L_0\times L_1)$ in $X_0^- \times X_1$, where $L_{01}^\mathrm{sh}$ denotes $L_{01}$ with background class shifted by $w_2(X_0 \times X_1)$ in the sense of \cite[Remark 5.1.8]{QuiltOr}.  This shift reverses the sign of the count of index $2$ discs, so the differential on $CF^*(L_{01}^\mathrm{sh}, L_0\times L_1)$ squares to
\[
w(L_0 \times L_1) - w(L_{01}^\mathrm{sh}) = w(L_0) + w(L_1) + w(L_{01}) = 0.\qedhere
\]
\end{rmk}


\subsection{Worked example I: $\C\P^1 \times \C\P^1$}
\label{sscQuiltI}

Consider the Lagrangian $\mathrm{SO}(3)$-oribt $L \subset (\C\P^1)^3$ from \cref{sscGysinI}.  Our aim is to reprove \cref{P13Wide} using \cref{CorrNonZero}.  To do this we view $L$ as a Lagrangian correspondence $L_{01}$ from $X_0 = (\C\P^1 \times \C\P^1)^-$ to $X_1 = \C\P^1$, and consider its composition $L_0$ with the Clifford torus (equatorial circle) $L_1$ in $X_1$.  This is equivalent to performing symplectic reduction at the equatorial level set for the $S^1$-action on the third $\C\P^1$ factor by rotation about the vertical axis.  This composition is embedded, and $L_0$ is precisely the monotone Chekanov torus $\TCh$ in $X_0$ as presented by Entov--Polterovich \cite[Example 1.22]{EnPoRigid}.  It consists of ordered triples of points on the sphere which form the vertices of an equilateral triangle on a great circle, such that the third point is constrained to the equator.

\begin{rmk}
This torus was first discovered by Chekanov in $\R^4$ \cite{ChekTor}, and appears in both $\C\P^2$ and $\C\P^1 \times \C\P^1$ in many Hamiltonian isotopic guises.  Comparisons between various different constructions are given by Gadbled \cite{Gad} and Oakley--Usher \cite{OakUsh}.
\end{rmk}

The hypotheses of \cref{CorrNonZero} are satisfied so after choosing appropriate relative spin structures the non-vanishing of $HF^*(\TCh, \TCh)$ implies the non-vanishing of $HF^*(L, L)$.  By \cite[Proposition 6.1.4]{BCQS}, the former is equivalent to the vanishing of the homology class swept by the boundaries of the index $2$ discs through a generic point of $\TCh$, and these discs were explicitly computed (for a specific regular complex structure) by Chekanov--Schlenk \cite[Lemma 5.2]{ChSchTwist}.  There are exactly five such discs, in classes $D_1$, $S_1 - D_1 - D_2$, $S_1 - D_1$, $S_2 - D_1$ and $S_2 - D_1 + D_2$ in $H_2(\C\P^1 \times \C\P^1, \TCh)$, where $S_1$ and $S_2$ are the classes of the spheres in each factor and $D_1$ and $D_2$ are discs whose boundaries form a basis for $H_1(\TCh; \Z)$.  We shall show that the relative spin structures and signs work out to give \cref{P13Wide}.

First equip $L_{01}=L$ with the standard spin structure, so its three index $2$ discs all count positively, and equip $L_1$ with the trivial spin structure.  By the paragraph after \cref{CorrNonZero} this induces a relative spin structure on $L_0=\TCh$ with background class $0$, satisfying (by \eqref{eqInd2Sum})
\[
w(\TCh) = -w(L_{01})-w(L_1) = -5.
\]
This means the five discs computed by Chekanov--Schlenk must all count with negative signs, so the sum of their boundaries is $3\pd D_1$.  We deduce that in this case $L$ is non-narrow (hence wide) when $\Char \K = 3$.

Now suppose we change the relative spin structure on $L$ to the one with $\eps_1=\eps_2=-\eps_3=1$, with background class $H_1+H_2$, recalling that $H_j$ represents the hyperplane class on the $j$th factor of $X=(\C\P^1)^3$.  This reverses the sign of the $u_3$ disc, so $w(L)$ becomes $1+1-1=1$.  The induced relative spin structure on $\TCh$ then has background class $H_1+H_2$ and satisfies
\[
w(\TCh) = -w(L_{01})-w(L_1) = -3.
\]
Let $\delta \in H^2(\C\P^1 \times \C\P^1, \TCh; \Z/2)$ describe the difference between this relative spin structure and the one in the previous paragraph, with respect to which all discs counted negatively.  Since the background class is $H_1+H_2$, we can write $\delta$ as
\[
S_1^\vee + S_2^\vee + \delta_1 D_1^\vee + \delta_2 D_2^\vee
\]
for some $\delta_j \in \Z/2$, where $S_1^\vee, S_2^\vee, D_1^\vee, D_2^\vee$ is the basis of $H^2(\C\P^1\times\C\P^1, \TCh; \Z/2)$ dual to the basis $S_1, S_2, D_1, D_2$ of $H_2(\C\P^1\times\C\P^1, \TCh; \Z/2)$.  We then have
\[
-3 = w(\TCh) = -(-1)^{\delta_1}+(-1)^{\delta_1+\delta_2}+(-1)^{\delta_1}+(-1)^{\delta_1}+(-1)^{\delta_1+\delta_2} = (-1)^{\delta_1} (1+2(-1)^{\delta_2}),
\]
where the five terms correspond to the signs attached to the five discs in the order listed above, and we conclude that $(-1)^{\delta_1}=-1$ and $(-1)^{\delta_2}=1$.  The sum of the boundaries is thus $5\pd D_1$, so $L$ is non-narrow (hence wide) when $\Char \K = 5$.

To deal with the cases $\eps_1=\eps_2=\eps_3=-1$ and $\eps_1=\eps_2=-\eps_3=-1$ we can take the above arguments and shift all relative spin structures as in \cite[Remark 5.1.8]{QuiltOr}; this simply flips the signs of all index $2$ discs.  All other relative spin structures on $L$ can be obtained from the four already considered by permuting the $\C\P^1$ factors.


\subsection{Worked example II: $\C\P^2$}
\label{sscCP2}

We can do the same thing for the Lagrangian lens space $L$ in $\C\P^2 \times \C\P^1$ from \cref{sscGysinII}, composing with the equator on the $\C\P^1$ factor to give the Chekanov torus in $\C\P^2$ (again see the papers of Gadbled \cite{Gad} and Oakley--Usher \cite{OakUsh} for equivalences of various definitions).  Using the methods of Chekanov--Schlenk \cite{ChSchTwist}, Auroux \cite[Proposition 5.8]{Au} computed that again this torus bounds five index $2$ holomorphic discs through a generic point, this time in classes $D_1$, $S-2D_1-D_2$, $S-2D_1$, $S-2D_1$, $S-2D_1+D_2$, where $S$ is the class of a line on $\C\P^2$ and $D_1$ and $D_2$ are classes of discs whose boundaries form a basis of $H_1(\TCh; \Z)$.

When $L$ is equipped with the standard spin structure and the equator with the trivial spin structure, we obtain $w(\TCh)=-5$ so all discs count negatively and the sum of their boundaries is $7\pd D_1$.  Thus $L$ is wide when $\Char \K = 7$.

Now change the relative spin structure on $L$ to the one with $\eps_1=-\eps_3=1$.  This has zero background class and gives $w(L)=1$, so the induced relative spin structure on $\TCh$ has zero background class and $w(\TCh)=-3$.  The difference between this relative spin structure on $\TCh$ and the previous one is thus of the form $\delta_1D_1^\vee + \delta_2D_2^\vee$ with
\[
-3 = w(\TCh) = -(-1)^{\delta_1}-(-1)^{\delta_2}-1-1-(-1)^{\delta_2}.
\]
We deduce that $(-1)^{\delta_1}=-1$ and $(-1)^{\delta_2}=1$, so the sum of the boundaries is $9\pd D_1$ and $L$ is wide when $\Char \K = 3$.

\bibliography{homogbiblio}
\bibliographystyle{utcapsor2}

\end{document}